\renewcommand{\theenumi}{\roman{enumi}}
\setlist[enumerate]{label={\rm(\theenumi)}}
\numberwithin{equation}{section}
\theoremstyle{plain}
\newtheorem{theorem}{Theorem}[section]
\newtheorem{proposition}[theorem]{Proposition}
\newtheorem{lemma}[theorem]{Lemma}
\newtheorem{assumption}[theorem]{Assumption}
\theoremstyle{definition}
\newtheorem{definition}[theorem]{Definition}
\newtheorem{remark}[theorem]{Remark}
\newtheorem{example}[theorem]{Example}
\newtheoremstyle{key}{3pt}{3pt}{}{}{\bfseries}{:}{.5em}{}
\theoremstyle{key}
\newtheorem*{JEL}{JEL subject classification}
\newtheorem*{keywords}{Keywords}
\DeclareMathOperator*{\esssup}{ess\,sup}
\renewcommand{\mid}{\,\vert\,}
\newcommand{\bigv}{\!\bigm\vert\!}
\newcommand{\Bigv}{\!\Bigm\vert\!}
\newcommand{\biggv}{\!\biggm\vert\!}
\providecommand{\abs}[1]{\left\lvert#1\right\rvert}
\providecommand{\norm}[1]{\lVert#1\rVert}
\providecommand{\ceil}[1]{\left\lceil#1\right\rceil}
\newcommand{\nbd}[1]{$#1$\nobreakdash-\hspace{0pt}}
\newcommand{\indi}[1]{\mathbf{1}_{\{#1\}}}
\newcommand{\indinb}[1]{\mathbf{1}_{#1}}
\newcommand{\N}{\mathbb{N}}
\newcommand{\Q}{\mathbb{Q}}
\newcommand{\R}{\mathbb{R}}
\newcommand{\B}{{\mathscr B}}
\newcommand{\F}{{\mathscr F}}
\newcommand{\T}{{\mathscr T}}
\title{Symmetric Equilibria in Stochastic Timing Games}
\author{Jan-Henrik Steg\thanks{%
Center for Mathematical Economics, Bielefeld University, Germany. E-mail: \texttt{jsteg@uni-bielefeld.de}
\protect\\
This version: May 20, 2018. Financial support by the German Research Foundation (DFG) via CRC 1283 is gratefully acknowledged.
}}
\date{ }
\begin{document}
\maketitle

\begin{abstract}
We construct subgame-perfect equilibria with mixed strategies for symmetric stochastic timing games with arbitrary strategic incentives. The strategies are qualitatively different for local first- or second-mover advantages, which we analyse in turn. When there is a local second-mover advantage, the players may conduct a war of attrition with stopping rates that we characterize in terms of the Snell envelope from the theory of optimal stopping. This is a very general result, but it provides a clear interpretation. When there is a local first-mover advantage, stopping typically results from preemption and is abrupt. Equilibria may differ in the degree of preemption, precisely when it is triggered or not. We develop an algorithm to characterize when preemption is inevitable and to construct corresponding payoff-maximal symmetric equilibria.

\begin{keywords}
Stochastic timing games, mixed strategies, subgame-perfect equilibrium, optimal stopping, {S}nell envelope.
\end{keywords}
\begin{JEL}
C61, C73, D21, L12
\end{JEL}
\end{abstract}

\section{Introduction}\label{sec:intro}

The aim of this paper is to construct and understand subgame-perfect equilibria for symmetric stochastic timing games, which have important applications, for instance, in strategic real option models. It is well known that in many timing games in continuous time, there exist no equilibria in pure strategies. If they do exist, however, they typically involve asymmetric payoffs that only depend on the respective roles of the players, which must be determined before the game starts. Then there is an unresolved strategic conflict. Here we strive for a rather general existence result and possibly symmetric payoffs, so we consider mixed strategies. In particular, no assumption is made concerning the local incentives, which can move randomly between first- and second-mover advantages. 

Restricting attention to games with a second-mover advantage is known to be helpful for equilibrium existence. We begin by analyzing that case, too, demonstrating the general payoff asymmetry in pure-strategy equilibria. Our main contribution for this case is the construction of mixed strategy equilibria with symmetric payoffs in a possibly general model, making no specific assumptions concerning the underlyling uncertainty. Nevertheless, the equilibrium strategies have a clear characterization and interpretation, using the concept of the Snell envelope from optimal stopping theory. Specifically, we can describe the stopping rates which make a player indifferent to stay in the game when forgoing a profitable local payoff. Due to the possible uncertainty, the tradeoffs can be all in expected terms. As we are not assuming any kind of smoothness or monotonicity of the underlying payoff processes, we also generalize existing results for purely deterministic models.

With a first-mover advantage, there is often a preemption incentive that leads to equilibrium existence problems, even with mixed strategies and in very simple and well-behaved deterministic models.\footnote{%
See, e.g., \cite{FudenbergTirole85} and \cite{HendricksWilson92}.
}
Strategy spaces and outcome distributions need to be extended to model preemption appropriately in continuous time, which requires some coordination device (not to be confused with familiar \emph{correlated} strategies).\footnote{%
Specifically, as there is no respective ``next period'' to charge with a threat of stopping if nobody stops in the current one, players have to be able to prevent that everybody waits for a positive amount of time, regardlessly of the opponents' strategies (resp.\ deviations), but without the implication that all stop simultaneously with probability one.
} 
We use the generalization of \citeauthor{FudenbergTirole85}'s \citeyearpar{FudenbergTirole85} concept developed in \cite{RiedelSteg17} for stochastic models, which affords us local equilibria of preemption reminiscent of symmetric mixed equilibria in discrete time. Such continuation equilibria can be combined with the previous, continuously mixed equilibria for local second-mover advantages, to establish a general existence and characterization of subgame-perfect equilibria with symmetric payoffs~-- without any restriction on the order of the underlying payoff processes.

These results facilitate the analysis of richer models, e.g., of strategic real options. So far, the focus has been either on attrition or on preemption, but feedback effects between strategically relevant first- and second-mover advantages in different phases of one game have not been studied much.\footnote{%
See \cite{StegThijssen15} for a model where every firm's equilibrium option exercise can happen while there is a first- or a second-mover advantage (each with positive probability) and for an explicit analysis of the strategic tradeoffs. Many real options models with preemption effects also have phases with second-mover advantages, but it is then argued that waiting is optimal (see \cite{RiedelSteg17} and \cite{Steg18} for general conditions when this is the case). \cite{Kwonetal16} either observe preemption or attrition depending on their initial choice of parameters. \cite{DecampsMariotti04} discuss introducing a preemption incentive to their war of attrition, which essentially leads to a new boundary condition for their equilibrium differential equation. 
}

Depending on the profitability of future continuation equilibria, preemption does not have to occur just because there is a current first-mover advantage. As the term says, preemption eliminates future continuation payoffs; so the less often preemption occurs, the higher are potential equilibrium payoffs. To determine at which times preemption is indeed inevitable, we provide an algorithm working under the additional assumption that simultaneous stopping is generally the least desirable outcome. Then we find that in any equilibrium with symmetric payoffs in every subgame, the equilibrium payoff can never exceed the expected value of optimally stopping the \emph{minimum} of the local leader and follower payoffs. This means, no matter how the players mix, possibly even with arbitrary public correlation and independently of infinite remaining time, they can never benefit from a high value of the underlying payoff processes if that is not attained by both the leader and follower payoffs simultaneously.

Then we know that the game has to end by preemption whenever the leader payoff exceeds the equilibrium payoff bound. Iterating this procedure cumulates in the identification of times when preemption cannot be avoided. Confining preemption to such times, we are able to construct an equilibrium with least sustainable preemption and highest possible payoffs.

\subsection{Main theorems}

This paper develops three main Theorems \ref{thm:SPE}, \ref{thm:symeql} and \ref{thm:maxeql}, which build on each other. 

Theorem \ref{thm:SPE} constructs subgame-perfect equilibria with mixed strategies for games with a systematic (weak) second-mover advantage, for instance. The players have to coordinate on a suitable payoff process therefor, which consists of the leader payoff up to some feasible time that either admits a simultaneous stopping equilibrium or is sufficiently late such that both players will have stopped for sure by then. The equilibrium proceeds by optimally stopping this fixed process. As long as there are expected gains, no player stops. When a point is reached, however, at which it would be strictly optimal to stop, i.e., if any further delay would imply a loss, then there needs to be a compensation in terms of some probability to obtain the superior follower payoff. We characterize the exact stopping rate that the respective opponent has to use to make each player indifferent to continue at such points. 

Owing to its generality, this result is technically not as clear as its interpretation. With typical Brownian models, for instance, one cannot apply local arguments as there is no path monotonicity at all. Consequently, it is then also impossible to distinguish proper time intervals on which mixing occurs~-- imagine a Brownian motion fluctuating around the boundary of the region where mixing indeed takes place. Nevertheless, using martingale arguments, we obtain a clear representation of strategies involving the concept of Snell envelope from the theory of optimal stopping, which allows us to speak meaningfully of a (local) expected loss, for instance. These strategies will typically be continuous up to some terminal jumps. Another important question then is time-consistency. If we define mixed strategies for all subgames, i.e., stopping times as starting dates, we have to ensure that they imply consistent conditional stopping probabilities throughout the game, which is generally not trivial.

Theorem \ref{thm:symeql} then incorporates further strategy extensions, which provide symmetric preemption equilibria for regimes with a first-mover advantage. The theorem establishes that they form feasible continuation equilibria when leaving regimes with second-mover advantages. In aggregate, we thus obtain payoff-symmetric equilibria for games without any restriction on the local incentives. There may be arbitrary, random alternations of first- or second-mover advantages in the game.

Theorem \ref{thm:maxeql} determines \emph{optimal} symmetric equilibria. Whereas the previous ones involve extreme preemption~-- whenever there is a strict first-mover advantage~--, we now identify equilibria with least sustainable preemption, resulting in the highest feasible payoffs. For that purpose we focus on \emph{payoff-symmetric} equilibria, with symmetric payoffs in every subgame, as this property has important implications for equilibrium strategies. Roughly, conditional stopping probabilities can only differ when players are currently indifferent between becoming leader or follower. Under the additional assumption that simultaneous stopping is not strictly better than leading or following, the players can coordinate at most on optimally stopping the minimum of the leader and follower payoff processes in any equilibrium. Whenever the leader payoff exceed that value, preemption must occur. Knowing this restricts the relevant stopping times in the previous problem, which further reduces the attainable value. Iterating the procedure formally as an algorithm identifies inevitable preemption points.

Theorem \ref{thm:maxeql} establishes that we even obtain a well-defined subgame-perfect equilibrium in the end, not only a limit value. It is based on the previous equilibria, but suppressing preemption whenever possible, and verifies that indeed well measurable, time-consistent strategies result when applying the proposed algorithm to all subgames.

\subsection{Related literature}

Strategic timing problems appear in an abundance of contexts, in particular in economics but also in biology, e.g., and consequently the related literature is vast.

On the one hand, there is a branch on deterministic timing problems in continuous time addressing a wide range of applications, where typically a distinction is made between preemption models and wars of attrition. Correspondingly, \cite{HendricksWilson92} and \cite{Hendricksetal88} study stylized models with systematic first- and second-mover advantages, respectively. A war of attrition appears in \cite{GhemawatNalebuff85}, who consider exit from a declining industry.\footnote{%
\cite{FudenbergTirole86} analyse a market exit problem with incomplete information. \cite{BulowKlemperer99} consider a similar problem with more than two firms. 
} 
In a seminal contribution, \cite{FudenbergTirole85} emphasize subgame-perfection in a symmetric preemption game. \cite{HoppeLehmann-Grube05} model a similar technology adoption game, allowing the leader payoff function to be multi-peaked, but restricting the follower payoff to be nonincreasing.\footnote{%
We discuss some implication of $F$ not to increase \emph{in expectation}, i.e., to be a supermartingale, in Section \ref{sec:eqlpure}. \cite{Duttaetal95} obtain again a similar structure as \cite{FudenbergTirole85} (including the single-peakedness property) from a model of product differentiation.
}
Without uncertainty, these games proceed quite linearly due to perfect foresight. More complications arise when the incentives may vary more freely. \cite{Larakietal05} consider general deterministic \nbd{N}player games with payoffs that are just continuous functions of time (for given identities of first-movers). They prove that there do always exist \nbd{\varepsilon}equilibria, but not necessarily exact equilibria.

On the other hand, there is also a wide branch of the literature considering timing games with uncertainty in continuous time. \cite{DuttaRustichini93}, e.g., formulate a symmetric Markovian setting. However, restricting themselves to pure strategies, their Markov perfect equilibrium payoffs are generally asymmetric and need to assume away preemption issues.

Important and numerous applications with uncertainty are strategic real options. For instance, an early contribution is \cite{Smets91}. A typical symmetric model of preemptive investment is that of \cite{MasonWeeds10}.\footnote{%
\cite{PawlinaKort06} consider a similar model with asymmetric investment costs and  \cite{Thijssen10} one with firm-specific uncertainty. \cite{LambrechtPerraudin03} model preemption with incomplete information.
}
\cite{Grenadier96} considers strategic real-estate development with construction delay and \cite{Weeds02} irreversible R\&D investment. A war of attrition results from \citeauthor{Murto04}'s \citeyearpar{Murto04} model of exit from duopoly, as well as from investment with learning externalities in \cite{DecampsMariotti04}.\footnote{%
The latter features Poisson uncertainty, in contrast to Brownian uncertainty in the other mentioned models, which allows an analysis based on monotonically evolving beliefs and a resulting differential equation.
}

Finally, as we emphasize uncertainty, the literature on Dynkin games and its large tradition needs to be mentioned. As these are two-person, \emph{zero-sum} timing games, the classical question is the existence of an equilibrium saddle point, or value, under varying conditions. Here we only refer to the more recent work by \cite{TouziVieille02}, as their payoff processes are very general and~-- more importantly~-- as they also use mixed strategies. They prove that many more Dynkin games then have a value. Their concept of mixed strategies is different, does not consider subgames, but expected payoffs at time zero can be related to those from our strategies.

Recently, also some more abstract work considering stochastic timing games with nonzero-sum payoffs has been conducted. \cite{HamadeneZhang10}, e.g., prove existence of a Nash equilibrium for 2-player games with second-mover advantage throughout.\footnote{%
See also \cite{HamadeneHassani14} for an extension to $N$ players using a similar approach. \cite{LarakiSolan13} make less assumptions concerning the incentives in a 2-player game. Consequently, even allowing for mixed strategies, they can only prove existence of \nbd{\varepsilon}equilibria.
}

\subsection{Outline}

This paper is organized as follows. In Section \ref{sec:game} we define our timing games, making only minimal regularity assumptions, and summarize the concept of subgame-perfect equilibria in mixed strategies as developed in \cite{RiedelSteg17}.

Although we are generally working with mixed strategies, equilibrium verification is related to solving optimal stopping problems by linearity. We establish a convenient representation of this connection in Section \ref{sec:BR} and present the needed facts from the general theory of optimal stopping. On the one hand, strategies will be represented in terms of the Snell envelope, which we motivate. On the other hand, in our games we have to be quite careful about existence of optimal stopping times, which depends strongly on path properties of the involved processes, so we will address some important details.

By a first application of this theory in Section \ref{sec:eqlpure}, we establish equilibria in pure strategies and argue that they typically generate coordination problems. These are resolved in Section \ref{sec:eqlmix} by the construction of subgame-perfect equilibria in mixed strategies, in a first step for games with systematic second-mover advantage. Although our representation of the equilibrium strategies can be well interpreted, we derive a completely explicit equilibrium for a market exit example in Section \ref{sec:expduo}.

In Section \ref{sec:eqlsym} we use the aforementioned strategy extensions to deal with first-mover advantages, which then enables us to construct and characterize subgame-perfect equilibria for arbitrary symmetric timing games. Finally, we identify equilibria with maximal payoffs and least possible preemption in Section \ref{sec:symeql}. Section \ref{sec:conc} concludes. Appendices contain some technical results and all proofs.

\section{The timing game}\label{sec:game}

We use the framework for subgame-perfect equilibria with mixed strategies developed in \cite{RiedelSteg17}, where the concepts summarized in this section are explained in more detail. Here we only consider symmetric games, which allows some simplifications incorporated in the following. 

The timing game consists of two players $i=1,2$, who each decide when to stop in continuous time $t\in[0,\infty]$ (with stopping at $t=\infty$ interpreted as ``never stopping''). Potential uncertainty about the state of the world is modeled by a fixed probability space $(\Omega,\F,P)$, and partial information about the true state that can evolve exogenously over time by a filtration $(\F_t)_{t\geq 0}$. The player's stopping decisions may of course use this information, so when a player stops may depend on the state (see Section \ref{subsec:strateql} for the formal definition of strategies).

As usual in timing games, we focus on situations (resp.\ histories) in which no player has stopped, yet. Therefore, the game ends as soon as some player stops. A player who is the single one to stop first is called the \emph{leader}; in this case, the other player becomes the \emph{follower}. Their respective payoffs are determined by two given stochastic processes, $L=(L_t)_{t\geq 0}$ and $F=(F_t)_{t\geq 0}$. Both processes incorporate the possible effect of an (optimal, contingent) stopping decision that the follower might have in a more primitive model, given that the opponent has already stopped, like in Example \ref{exm:attrition}. If the game ends by both players stopping simultaneously, then their payoffs are determined by a third given process, $M=(M_t)_{t\geq 0}$, respectively the random variable $M_\infty$ if no player stops in finite time. All payoffs are measured in the same numeraire, say, discounted to time zero, and the players are risk neutral.\footnote{%
Alternatively, one can interpret the payoff processes as measured in discounted ``utils''.
} 

Equilibria will obviously be based on solving optimal stopping problems involving the three underlying payoff processes. We need to make some weak regularity assumptions in order to have well defined problems in the following.

\begin{assumption}\label{asm:payoffs}
\par\noindent
\begin{enumerate}
\item
$(\Omega,\F,P)$ is a fixed probability space equipped with a filtration $(\F_t)_{t\geq 0}$ satisfying the usual conditions (i.e., right-continuity and completeness).

\item
The processes $L$, $F$ and $M$ are adapted, right-continuous (a.s.) and of class {\rm (D)}, and $E[\abs{M_\infty}]<\infty$.

\item\label{LFusc}
$\min(L,F)$ is upper-semi-continuous from the left in expectation, in fact on $[0,\infty]$ if we put $L_\infty:=F_\infty:=M_\infty$.
\end{enumerate}
\end{assumption}

\begin{remark}\label{rem:payoffs}
\par\noindent
\begin{enumerate}
\item\label{stoppingtime}
The payoff processes $L$, $F$ and $M$ do not have to be random; deterministic ones are just a special case. Even then the probability space and filtration might be nontrivial and represent possible public randomization devices. The current payoffs at any time $t$ just should be known by the public information $\F_t$. The state-dependent dates identifiable by the dynamic information $(\F_t)$ are the \emph{stopping times} $\tau\colon\Omega\to[0,\infty]$, i.e., those satisfying $\{\tau\leq t\}\in\F_t$ for every $t\in\R_+$. They are crucial for strategies and outcomes. Let $\T$ denote the set of all stopping times.

\item\label{rem:M_infty}
It depends on the model whether there is a natural payoff if both players ``never stop'', which may be some limit of $M$ or of $L$. In the latter case, we simply set $M_\infty\equiv L_\infty$ and work with $M_\infty$ for a unified payoff notation. For convenience, we also formally define
\begin{equation*}
F_\infty:=M_\infty.
\end{equation*}

\item
Two important general technical issues are measurability, in particular concerning strategies, and integrability. We need to ensure that expectations are always well defined and that pointwise converging random variables converge in expectation, too. Class {\rm (D)} is the possibly weakest integrability condition we can work with.\footnote{\label{fn:classD}%
A measurable process $(X_t)_{t\geq 0}$ is of class {\rm (D)} if the family $\{X_\tau\mid\tau\in\T,\tau<\infty\text{ a.s.}\}$ is uniformly integrable. Then the family is bounded in expectation and pointwise convergence of $X_t$ at a stopping time $\tau<\infty$ implies convergence in $L^1(P)$ as well. This is a mild regularity condition implied by, e.g., $E[\sup_t\abs{X_t}]<\infty$ or $\sup_\tau E[\abs{X_\tau}^p]<\infty$ for some $p>1$. We may equivalently define any $X_\infty\in L^1(P)$ and consider \emph{all} (also nonfinite) stopping times $\tau$ in the previous set; see, e.g., Lemma B.1 in \cite{RiedelSteg17}.
}
Boundedness would be much too strong for many applications (e.g., involving Brownian motion).

\item
In order to have any general existence results for equilibria, some path regularity of the payoff processes is necessary, as can be clearly seen even in the deterministic, single agent  case. Nevertheless, it suffices for us to have upper-semi-continuity from the left only in expectation.\footnote{%
Upper-semi-continuity from the left in expectation means $E[L_\tau\wedge F_\tau]\geq\limsup_{n}E[L_{\tau_n}\wedge F_{\tau_n}]$ for any sequence of stopping times $(\tau_n)_{n\in\N}$ that is a.s.\ increasing to a stopping time $\tau$.
}
This is also a known necessary condition for optimal stopping problems. We use it for equilibria in mixed strategies when there is a (local) second-mover advantage. It is of course only required for $L$ if that never exceeds $F$. Indeed, one could restrict attention to intervals $[\tau,\,\inf\{t\geq\tau\mid L_t>F_t\}]$, where $\tau$ is a stopping time; the area $\{L>F\}\subseteq\Omega\times\R_+$ is only relevant at transitions. The assumption is satisfied if, e.g., the paths of $L$ and $F$ are a.s.\ (upper-semi-)continuous from the left.\footnote{%
Then $\limsup_{s\nearrow t}(L_s\wedge F_s)\leq(\limsup_{s\nearrow t}L_s)\wedge(\limsup_{s\nearrow t}F_s)\leq L_t\wedge F_t$ for all $t\in[0,\infty]$ a.s., and we note that $L$ and $F$ are of class {\rm (D)}.
}
\end{enumerate}
\end{remark}

\begin{example}\label{exm:attrition}
Let us consider a market exit problem as a simple example for a stochastic timing game with second-mover advantage, i.e., $F\geq L$, like in the classical war of attrition.\footnote{%
For related examples of preemption type, see \cite{Steg18}.
}

Suppose that two firms are operating in one market such that duopoly returns $\pi^D$ might not be sustainable in the long run, depending on uncertain exogenous conditions. Whereas each firm would in general like the opponent to leave the market in order to earn the monopoly profit $\pi^M\geq\pi^D$, it might be too costly to wait for that possibly random event. Each firm thus decides on times when waiting becomes no longer promising, and at which to leave the market if the other is still present.

The payoff processes are then given by
\begin{align}\label{Ft}
L_t={}&M_t:=\int_0^t\pi^D_s\,ds,\nonumber\displaybreak[0]\\
F_t:={}&L_t+\esssup_{\tau^F\in\T\colon\tau^F\geq t}E\biggl[\int_t^{\tau^F}\pi^M_s\,ds\biggv\F_t\biggr]
\end{align}
for every $t\in[0,\infty]$, reflecting that also a monopolist may quit at stopping times $\tau^F\in\T$ (cf.\ Remark \ref{rem:payoffs} \ref{stoppingtime}) when even the monopoly return is not profitable, so that immediate exit is dominant and the second-mover advantage not strict. However, we will not explicitly model the \emph{strategy} of a single remaining firm, but incorporate the corresponding optimal decision in the payoff processes; subgame-perfection requires that the latter be chosen optimally.

Assumption \ref{asm:payoffs} is satisfied by this example if $\pi^D$ and $\pi^M$ are adapted and \nbd{P\otimes dt}integrable, as all processes are then bounded by an integrable random variable.  The convention $F_\infty=M_\infty=L_\infty$ here holds naturally. It will follow from our discussion in Section \ref{subsec:optstop} that there exists a right-continuous process $F$ such that relation \eqref{Ft} holds even when replacing $t$ by a general stopping time $\tau$, which is one of the most important results in continuous-time stopping theory.\footnote{%
This issue is more delicate for $L$ if the leader's payoff also depends on the stopping time eventually chosen by the follower, like in an entry model; see \cite{Steg18}. Optimality of the follower's decision implies right-continuity (in expectation) of the resulting payoff process, but not for the leader's, because the follower's decision is in general unrelated to (optimally stopping) the leader's payoff \emph{stream}. Such problems will typically not arise in diffusion models, however.
}
Moreover, $F_\tau$ is then indeed consistent with an optimal follower stopping decision $\tau^{F*}\in\T$, $\tau^{F*}\geq\tau$; see also the discussion of a more specific instance in Section \ref{sec:expduo}.
\end{example}

\subsection{Mixed strategies and equilibrium concept}\label{subsec:strateql}

We use the following concept of subgame-perfect equilibrium for stochastic timing games developed in \cite{RiedelSteg17}. As usual in timing games, the basic objects are plans for each player when to move, resp.\ \emph{stop}, which are followed as long as no other player stops before.\footnote{%
Cf.\ \cite{Larakietal05} or \cite{FudenbergTirole85}, who call these plans ``simple strategies.''
}
Consequently, the players stopping first are those whose planned times are minimal, and then payoffs as prescribed by the given processes accrue, depending on which players' plans are minimal. A feasible state-contingent plan is a \emph{stopping time} $\tau$, the set of which is denoted by $\T$ (see Remark \ref{rem:payoffs} \ref{stoppingtime}). These times are identifiable by the dynamic information $(\F_t)$ and therefore also constitute feasible continuous-time decision nodes when to revise plans. To describe behavior also off path, \emph{every} stopping time is hence additionally considered as the beginning of a \emph{subgame} with the connotation that no player has stopped, yet. In this role, they will typically be denoted by $\vartheta\in\T$.\footnote{\label{fn:stopinfo}%
Considering only deterministic times $t\in\R_+$ and their information structures $\F_t$ for starts of subgames would in general be incomplete, because stopping times $\vartheta\in\T$ and their information structures $\F_\vartheta=\{A\in\F\mid\forall t\in\R_+\colon A\cap\{\vartheta\leq t\}\in\F_t\}$ are (dynamically) identifiable by $(\F_t)$, but form a much richer system in continuous time.
}

In any subgame, the players can randomize over remaining plans by specifying distribution functions $G_i^\vartheta$ on $[\vartheta,\infty]$ that may still condition on the state by $(\F_t)$. A ``pure'' plan $\tau\geq\vartheta$ then corresponds to $G_i^\vartheta(t)=\indi{t\geq\tau}$. Plans for different subgames must be time-consistent; in particular, randomized plans for different starting dates have to induce the same \emph{conditional} stopping probabilities whenever possible.

Additional strategy extensions are needed for subgames with first-mover advantages, to model preemption appropriately in continuous time.\footnote{%
See also \cite{HendricksWilson92} on equilibrium existence issues for deterministic preemption games.
}
As in \cite{FudenbergTirole85}, the players can also place an ``atom'' $\alpha_i^\vartheta(t)$~-- a conditional stopping probability~-- on \emph{every} point $t\in[0,\infty]$, which is conceived to capture limit outcomes from discrete time.\footnote{%
For an actual limit analysis in the deterministic case, see \cite{Steg17}.
} 
These extensions are included in the following formal definition, although we will ignore them in the discussion of games with a second-mover advantage and only take them up later for general games.

\begin{definition}\label{def:alpha}
An \emph{extended mixed strategy} for player $i\in\{1,2\}$ in the subgame starting at $\vartheta\in\T$ is a pair of processes $(G_i^\vartheta,\alpha_i^\vartheta)$ that both take values in $[0,1]$ and satisfy the following properties. 
\begin{enumerate}
\item
$G_i^\vartheta$ is adapted. It is a.s.\ nondecreasing, right-continuous, and satisfying $G_i^\vartheta(t)=0$ for all $t<\vartheta$. 

\item
$\alpha_i^\vartheta$ is progressively measurable.\footnote{%
Formally, the restricted mappings $\alpha_i^\vartheta\colon\Omega\times[0,T]\to\R$ must be $\F_T\otimes\B([0,T])$-measurable for any $T\in\R_+$. This is a stronger condition than adaptedness, but weaker than optionality, which we automatically have for $G_i^\vartheta$ by right-continuity. Progressive measurability implies that $\alpha_i^\vartheta(\tau)$ will be \nbd{\F_\tau}measurable for any $\tau\in\T$.
} 
It is a.s.\ right-continuous in all $t\in\R_+$ for which $\alpha_i^\vartheta(t)<1$ and satisfying $\alpha_i^\vartheta(t)=0$ for all $t<\vartheta$.\footnote{%
As we are here only interested in \emph{symmetric} games, we may demand $\alpha_i^\vartheta(\cdot)$ to be right-continuous also where it takes the value zero, which simplifies the definition of outcomes. See \cite{RiedelSteg17} for issues with asymmetric games and corresponding weaker regularity restrictions.
}
  
\item
\begin{equation*}
\alpha_i^\vartheta(t)>0\Rightarrow G_i^\vartheta(t)=1\qquad\text{for all }t\geq 0\text{, a.s.}
\end{equation*}
\end{enumerate}
For every extended mixed strategy, define also $G_i^\vartheta(0-)\equiv 0$, $G_i^\vartheta(\infty)\equiv 1$ and $\alpha_i^\vartheta(\infty)\equiv 1$.
\end{definition}

We speak of a ``standard'' mixed strategy if $\alpha_i^\vartheta(t)=0$ for all $t\in\R_+$, i.e., $\alpha_i^\vartheta\equiv\alpha_i^\infty$. Restricting to such strategies is in the following equivalent to defining only mixed strategies $G_i^\vartheta$ with the given properties and no extensions at all. If furthermore $G_i^\vartheta(t)=\indi{t\geq\tau}$ for some stopping time $\tau\in\T$, then the respective strategy is referred to as ``pure.'' For a pair of pure strategies, corresponding to stopping times $\tau_i,\tau_j\geq\vartheta$, player $i$'s expected payoff at $\vartheta$ will be
\begin{equation*}\label{payoffpure}
E\Bigl[\indi{\tau_i<\tau_j}L_{\tau_i}+\indi{\tau_i>\tau_j}F_{\tau_j}+\indi{\tau_i=\tau_j}M_{\tau_i}\Bigv\F_\vartheta\Bigr].
\end{equation*}
This is extended linearly to mixed strategies.

\begin{definition}\label{def:payoffs_extended}
Given two extended mixed strategies $(G_i^\vartheta,\alpha_i^\vartheta)$, $(G_j^\vartheta,\alpha_j^\vartheta)$, $i,j\in\{1,2\}$, $i\neq j$,  the \emph{payoff} of player $i$ in the subgame starting at $\vartheta\in\T$ is
\begin{align*}
V_i^\vartheta\bigl(G_i^\vartheta,\alpha_i^\vartheta,G_j^\vartheta,\alpha_j^\vartheta\bigr):=E&\biggl[\int_{[0,\hat\tau^\vartheta)}\bigl(1-G_j^\vartheta(s)\bigr)L_s\,dG_i^\vartheta(s)+\int_{[0,\hat\tau^\vartheta)}\bigl(1-G_i^\vartheta(s)\bigr)F_s\,dG_j^\vartheta(s)\nonumber\\
&+\sum_{s\in[0,\hat\tau^\vartheta)}\Delta G_i^\vartheta(s)\Delta G_j^\vartheta(s)M_s+\lambda^\vartheta_{L,i}L_{\hat\tau^\vartheta}+\lambda^\vartheta_{L,j}F_{\hat\tau^\vartheta}+\lambda^\vartheta_{M}M_{\hat\tau^\vartheta}\biggv\F_\vartheta\biggr],
\end{align*}
where $\hat\tau^\vartheta:=\inf\{t\geq\vartheta\mid\alpha_1^\vartheta(t)+\alpha_2^\vartheta(t)>0\}$ and $\lambda^\vartheta_{L,i}$, $\lambda^\vartheta_{L,j}$ and $\lambda^\vartheta_{M}$ are the terminal outcome probabilities (of players $i$ or $j$ becoming leader, or simultaneous stopping, resp.) induced by $\alpha_i^\vartheta,\alpha_j^\vartheta$ at $\hat\tau^\vartheta$ and defined in Appendix \ref{app:outcome}.
\end{definition}

The outcome probabilities from the extensions sum up to $(1-G_i^\vartheta(\hat\tau^\vartheta-))(1-G_j^\vartheta(\hat\tau^\vartheta-))$, the probability of nobody stopping before they are used. Their definition in Appendix \ref{app:outcome} is a simplification of that in \cite{RiedelSteg17} due to slightly stronger regularity. If both players use standard mixed strategies, in particular if they reserve some mass for $t=\infty$, then $\hat\tau^\vartheta=\infty$, $\lambda^\vartheta_{L,i}=\lambda^\vartheta_{L,j}=0$, and the associated payoff part is $(1-G_i^\vartheta(\infty-))(1-G_j^\vartheta(\infty-))M_\infty$.

The pathwise integrals include possible jumps of the right-continuous integrators at zero, as player $i$ can indeed become leader (follower) from an initial jump of $G_i^\vartheta$ ($G_j^\vartheta$). Assumption \ref{asm:payoffs} ensures that the payoffs are well defined and bounded in expectation; cf.\ Lemma \ref{lem:LdG}. 

For a consistent dynamic view of the whole game, the conditional stopping probabilities at any fixed time from strategies for different subgames have to be the same whenever possible.

\begin{definition}\label{def:TC_extended}
A \emph{time-consistent extended mixed strategy} for player $i\in\{1,2\}$ in the timing game is a family $((G_i^\vartheta,\alpha_i^\vartheta);\vartheta\in\T)$ of extended mixed strategies for all subgames such that for all $\vartheta,\vartheta',\tau\in\T$ with $\vartheta\leq\vartheta'\leq\tau$ it holds that (a.s.)
\begin{equation*}
G_i^\vartheta(t)=G_i^\vartheta(\vartheta'-)+\bigl(1-G_i^\vartheta(\vartheta'-)\bigr)G_i^{\vartheta'}(t)\text{ for all }t\geq\vartheta'\quad\text{and}\quad\alpha_i^\vartheta(\tau)=\alpha^{\vartheta'}_i(\tau).
\end{equation*}
\end{definition}

Time-consistency implies that for any two subgames resp.\ starting at $\vartheta,\vartheta'\in\T$ indeed $G_i^\vartheta\equiv G_i^{\vartheta'}$ (a.s.) on the event $\{\vartheta=\vartheta'\}$, as should be expected. 

\begin{definition}\label{def:SPE_extended}
A \emph{subgame-perfect equilibrium} is a pair of time-consistent extended mixed strategies $((G_1^\vartheta,\alpha_1^\vartheta);\vartheta\in\T)$, $((G_2^\vartheta,\alpha_2^\vartheta);\vartheta\in\T)$ such that for all $\vartheta\in\T$, $i,j\in\{1,2\}$, $i\neq j$, and extended mixed strategies $(G_a^\vartheta,\alpha_a^\vartheta)$ for the subgame at $\vartheta$ it holds that
\begin{equation*}
V_i^\vartheta\bigl(G_i^\vartheta,\alpha_i^\vartheta,G_j^\vartheta,\alpha_j^\vartheta\bigr)\geq V_i^\vartheta\bigl(G_a^\vartheta,\alpha_a^\vartheta,G_j^\vartheta,\alpha_j^\vartheta\bigr)\quad\text{a.s.},
\end{equation*}
i.e., such that every pair $(G_1^\vartheta,\alpha_1^\vartheta)$, $(G_2^\vartheta,\alpha_2^\vartheta)$ is an \emph{equilibrium} in the subgame at $\vartheta\in\T$.
\end{definition}

\section{Best replies and optimal stopping}\label{sec:BR}

The payoffs in Definition \ref{def:payoffs_extended} are apparently linear in strategies. In this section, we derive a more explicit representation of that linearity, which will be very helpful for rigorous equilibrium verification and also for necessity arguments. To construct or to verify any best replies, it is in general necessary to maximize over (extended) mixed strategies against such objects. Here we make related statements such as ``any stopping time in the support of the mixed strategy needs to be optimal'' precise. We furthermore introduce central concepts from the theory of optimal stopping, notably the \emph{Snell envelope}, which will play a crucial role in the following representation and interpretation of mixed strategies in equilibrium.

The following arguments concern the distributions $G_i^\vartheta$, so we focus on ``standard'' mixed strategies and ignore the extensions $\alpha_i^\vartheta$ for notational simplicity until Section \ref{sec:eqlsym} (which does not weaken our equilibrium notion as Lemma \ref{lem:BRpure} shows). 

For an alternative representation of player $i$'s payoff in the subgame starting at $\vartheta\in\T$, we introduce the process $S_i^\vartheta$ given by
\begin{equation}\label{Si}
S_i^\vartheta(t):=\int_{[0,t)}F_s\,dG_j^\vartheta(s)+\Delta G_j^\vartheta(t)M_t+\bigl(1-G_j^\vartheta(t)\bigr)L_t
\end{equation}
for all $t\in[0,\infty)$, where $G_j^\vartheta$ is from a given mixed strategy for the opponent. Lemma \ref{lem:SclassD} shows that $S_i^\vartheta$ is well behaved: like $L$, $F$, and $M$, it is optional\footnote{\label{fn:optional}%
This means that $S_i^\vartheta$ is measurable w.r.t.\ the optional \nbd{\sigma}field on the product space $\Omega\times\R_+$, which is generated by all right-continuous adapted processes or equivalently by the random intervals $[0,\tau)$, $\tau\in\T$.
}
(so in particular adapted) and of class {\rm (D)}, but not necessarily right-continuous. Given $M_\infty\in L^1(P)$, we can extend the definition of $S_i^\vartheta$ in \eqref{Si} to $t=\infty$, implying also $S_i^\vartheta(\infty)\in L^1(P)$. 
$S_i^\vartheta$ may now be integrated by $dG_i^\vartheta$ thanks to Lemma \ref{lem:LdG}, such that player $i$'s expected payoff at $\vartheta\in\T$ from a pair of standard mixed strategies can be written as\footnote{%
An application of Fubini's theorem given integrability thanks to Lemma \ref{lem:LdG} yields in particular
\begin{align*}
&\int_{[0,\infty)}\bigl(1-G_i^\vartheta(s)\bigr)F_s\,dG_j^\vartheta(s)=\int_{[0,\infty)}\int_{[0,\infty]}\indi{t>s}\,dG_i^\vartheta(t)F_s\,dG_j^\vartheta(s)\\
={}&\int_{[0,\infty]}\int_{[0,\infty)}\indi{s<t}F_s\,dG_j^\vartheta(s)\,dG_i^\vartheta(t)=\int_{[0,\infty]}\int_{[0,t)}F_s\,dG_j^\vartheta(s)\,dG_i^\vartheta(t)\in L^1(P).
\end{align*}
}
\begin{equation}\label{V=SdG}
V_i^\vartheta\bigl(G_i^\vartheta,G_j^\vartheta\bigr)=E\biggl[\int_{[0,\infty]}S_i^\vartheta(t)\,dG_i^\vartheta(t)\biggv\F_\vartheta\biggr].
\end{equation}
The linearity in \eqref{V=SdG} suggests that a best reply exists if and only if there is a pure one. 

\begin{lemma}\label{lem:BRpure}
For any $\vartheta\in\T$ and standard mixed strategies $G_i^\vartheta$, $G_j^\vartheta$ in the subgame at $\vartheta$,
\begin{equation}\label{stopSi}
V_i^\vartheta\bigl(G_i^\vartheta,G_j^\vartheta\bigr)\leq\esssup_{\tau\in\T\colon\tau\geq\vartheta}E\bigl[S_i^\vartheta(\tau)\bigv\F_\vartheta\bigr]\qquad\text{a.s.},
\end{equation}
with equality if and only if for a.e.\ $x\in[0,1)$, the right-hand side is attained by the stopping time $\tau_i^{G,\vartheta}(x):=\inf\{t\geq\vartheta\mid G_i^\vartheta(t)>x\}$. Moreover, the inequality still holds if $G_i^\vartheta$ is extended by any feasible (nontrivial) $\alpha_i^\vartheta$.
\end{lemma}

\noindent
{\it Proof:} In Appendix \ref{app:miscproofs}.
\medskip

Lemma \ref{lem:BRpure} implies that we can speak of equilibria in pure or standard mixed strategies in the sense of Definition \ref{def:SPE_extended}, i.e., such that deviations to arbitrary extended mixed strategies are allowed, but that it suffices to consider only other pure or standard mixed strategies.

Specifically, Lemma \ref{lem:BRpure} indicates that $G_i^\vartheta$ will be a best reply to $G_j^\vartheta$ if and only if for any stopping time $\tau^*$ with $dG_i^\vartheta(\tau^*)>0$\footnote{%
This means that $G_i^\vartheta(t)>G_i^\vartheta(\tau-)$ for all $t>\tau$ a.s.
}
it holds that
\begin{equation*}
E\bigl[S_i^\vartheta(\tau^*)\bigv\F_\vartheta\bigr]\geq E\bigl[S_i^\vartheta(\tau)\bigv\F_\vartheta\bigr]\qquad\text{for all }\tau\in\T\text{ with }\tau\geq\vartheta
\end{equation*}
(see the proof of Theorem \ref{thm:mixedeql} in Appendix \ref{app:miscproofs} for details). Therefore, we generally need to solve the stopping problem on the right-hand side in Lemma \ref{lem:BRpure}.

A central aspect of continuous-time games of timing is their inherent discontinuity, even if the underlying data (here $L$, $F$, and $M$) is continuous. For instance, from the definition of $S_i^\vartheta$ in \eqref{Si} it is clear that a best reply cannot have any joint mass points when $F>M$, because $S_i^\vartheta(t+)-S_i^\vartheta(t)=\Delta G_j^\vartheta(t)\bigl(F_t-M_t\bigr)$ by right-continuity of $L$ and $G_j^\vartheta$; this will be a frequent argument. Depending on $G_j^\vartheta$, there need not exist any stopping time that actually attains the value of the problem, as $S_i^\vartheta$ may have various kinds of discontinuities. Dealing with such discontinuities will be one of the major issues.

In the following subsection, we present some crucial facts from the general theory of optimal stopping in continuous time, providing in particular sufficient (and basically necessary) conditions for the existence of optimal stopping times and their characterization in terms of the Snell envelope. The latter is in fact our main tool to derive and represent mixed equilibrium strategies.

\subsection{Optimal stopping in continuous time}\label{subsec:optstop}

As a motivating stopping problem to present the theory, consider the unilateral problem of when to become the leader optimally, i.e., supposing the opponent will never move. This problem will play an important role in the following.\footnote{%
To stay in the framework of the game and to find a (pure) best reply to $G^0_j$ given by $\indi{t\geq\infty}$, we have to use the payoff $M_\infty$ for not stopping in finite time. Recall our convention $L_\infty\equiv M_\infty$, however. 
}

It is well established how to characterize the solution of the optimal stopping problem
\begin{equation*}
V_L(0):=\esssup_{\tau\in\T}E\bigl[L_\tau\bigr]
\end{equation*}
given Assumption \ref{asm:payoffs}. In fact, our payoff process $L$ is right-continuous (hence optional, cf.\ fn.\ \ref{fn:optional}) and of class {\rm (D)}, so we can apply the general theory of optimal stopping as in, e.g., \cite{Mertens72} and \cite{BismutSkalli77}:
There exists a smallest supermartingale $U_L$ dominating the payoff process $L$, called the \emph{Snell envelope} of $L$, which satisfies
\begin{equation}\label{UL}
U_L(\vartheta)=\esssup_{\tau\in\T\colon\tau\geq\vartheta}E\bigl[L_\tau\bigv\F_\vartheta\bigr]\quad\text{a.s.}
\end{equation}
for all stopping times $\vartheta\in\T$. In particular, $U_L(0)=V_L(0)$. We remark that one can well define the right-hand side of \eqref{UL} for any $\vartheta\in\T$, but the key insight is that there exists a well behaved \emph{process} $U_L=(U_L(t))_{t\geq 0}$ that can be evaluated at any stopping time $\vartheta$ to know the continuation value then. In view of the dynamic programming principle, we also need to consider continuation problems at stopping times; the latter are feasible quantities, but much richer than deterministic times. 

Now $U_L$ is optional and of class {\rm (D)} as well,\footnote{%
See \cite{Mertens72}, Th\'eor\`eme T4 for the existence and Th\'eor\`eme T5 and proof for $U_L$ being of class {\rm (D)}.
}
and such supermartingales have very convenient regularity properties: There exists a \emph{Doob-Meyer decomposition}\footnote{%
See \cite{Mertens72}, Th\'eor\`eme T3.
}
\begin{equation*}
U_L=M_L-D_L
\end{equation*}
that we extensively use, with a uniformly integrable, right-continuous martingale\footnote{%
Therefore, the crucial optional sampling holds: $M_L(\sigma)=E[M_L(\tau)\mid\F_\sigma]$ for all $\sigma\leq\tau\in\T$. Moreover, $M_L$ has a last element $M_L(\infty)$ to which it converges in $L^1(P)$.
}
$M_L$ and a \emph{nondecreasing}, predictable and integrable process $D_L$. The latter can be interpreted as measuring the \emph{expected loss from stopping too late}: If we ignore to stop before any $\tau\in\T$, then we cannot achieve more than $E[U_L(\tau)]=U_L(0)-E[D_L(\tau)]$, even if we stop optimally from $\tau$ onwards. 

Reflecting the dynamic programming principle, the value process $U_L$ is a martingale as long as there still exists a future time $\tau\in\T$ giving at least the same value in expectation as stopping immediately. Whether there exists any \emph{optimal} stopping time depends on the continuity properties of $D_L$. If $L$ is upper-semi-continuous in expectation (as by Assumption \ref{asm:payoffs}\,\ref{LFusc} if $L\leq F$, e.g.), then $D_L$ has left-continuous paths a.s.\footnote{\label{fn:Lusc}%
See \cite{BismutSkalli77}, Th\'eor\`eme II.2 and proof. (Semi-) Continuity in expectation is in general weaker than the corresponding path property from the left.

Our payoff processes are not necessarily positive. However, if $L$ is optional and of class {\rm (D)}, the same will be true for its negative part $L^-:=\max(-L,0)$, which thus has a Snell envelope $U_{L^-}=M_{L^-}-D_{L^-}$ decomposing into a uniformly integrable right-continuous martingale $M_{L^-}$ and an integrable increasing process $D_{L^-}$. Then $M_{L^-}-L^-\geq 0$, implying $L+M_{L^-}\geq 0$. Adding the martingale $M_{L^-}$ neither affects $L$ being optional, of class {\rm (D)}, or (semi-) continuous in expectation, nor any optimal stopping times for $L$.
}
By right-continuity of $L$, $D_L$ will be even continuous.\footnote{\label{fn:Lrc}%
See \cite{BismutSkalli77}, (2.15), where right-continuity of the payoff process implies in fact $Z^+=X$.
}
With left-continuous $D_L$, there exist the optimal stopping times\footnote{%
\emph{Example}: $L$ not upper-semi-continuous $\Rightarrow$ $\inf\{D_L>0\}$ not optimal.

\begin{minipage}[b]{0.4\linewidth}
\centering
   \begin{tikzpicture}[inner sep=0pt,minimum size=0pt,label distance=3pt]
    \draw[->] (-0.1,0) -- (4,0) {}; 
    \draw[->] (0,-0.3) -- (0,2) {}; 
    \draw[-] (0,0.8) -- (1.8,0.8) [] {};
    \draw[-] (1.8,1.5) -- (3.5,1) [] {};
    \draw[dotted] (0,1.5) -- (3.5,1.5) []{};
    \draw[dashed] (1.8,0) -- (3.5,0.5)[]{};
    \fill[black] (1.8,0.8) circle (.04);
    \filldraw[fill=white,draw=black] (1.8,1.5) circle (.04);

    \node at (3.5,1.5) [label=right:$M_L$] {};
    \node at (3.5,0.9) [label=right:$L$] {};
    \node at (3.5,0.35) [label=right:$D_L$] {};
  \end{tikzpicture}
\end{minipage}
\begin{minipage}[b]{0.4\linewidth}
\centering
   \begin{tikzpicture}[inner sep=0pt,minimum size=0pt,label distance=3pt]
    \draw[->] (-0.1,0) -- (4,0) {}; 
    \draw[->] (0,-0.3) -- (0,2) {}; 
    \draw[-] (0,0.8) -- (1.8,1.5) [] {};
    \draw[-] (1.8,1) -- (3.5,1) [] {};
    \draw[dotted] (0,1.5) -- (3.5,1.5) []{};
    \draw[dashed] (1.8,0.5) -- (3.5,0.5)[]{};
    \fill[black] (1.8,1) circle (.04);
    \fill[black] (1.8,0.5) circle (.04);
    \filldraw[fill=white,draw=black] (1.8,1.5) circle (.04);
    \filldraw[fill=white,draw=black] (1.8,0) circle (.04);

    \node at (3.5,1.5) [label=right:$M_L$] {};
    \node at (3.5,0.9) [label=right:$L$] {};
    \node at (3.5,0.35) [label=right:$D_L$] {};
  \end{tikzpicture}
\end{minipage}
}
\begin{align}\label{tauopt}
\tau_L^{*}(\vartheta):=\inf\bigl\{t\geq\vartheta\bigv U_L(t)=L_t\bigr\}\quad\text{and}\quad\tau_L^{**}(\vartheta):=\inf\bigl\{t\geq\vartheta\bigv D_L(t)>D_L(\vartheta-)\bigr\}.\hphantom{,}
\end{align}
They are the respectively smallest and largest stopping times after $\vartheta\in\T$ attaining\footnote{%
See \cite{BismutSkalli77}, Th\'eor\`eme II.3.
}
\begin{equation}\label{U_L=L}
U_L(\vartheta)=E\Bigl[L_{\tau_L^{*}(\vartheta)}\Bigv\F_\vartheta\Bigr]=E\Bigl[L_{\tau_L^{**}(\vartheta)}\Bigv\F_\vartheta\Bigr]\quad\text{a.s.}
\end{equation}
Hence, by optimality it must hold that $U_L=L$ a.s.\ at any point of increase of $D_L$, which implies in fact\footnote{\label{fn:(U_L-L)dD_L=0}%
For $L$ right-continuous and upper-semi-continuous in expectation, $D_L$ is continuous, so $U_L$ inherits right-continuity from $M_L$. Then, by \eqref{tauopt}, \eqref{U_L=L} and right-continuity of $U_L-L$, $\inf\{t\in\R_+\mid\int_0^t\indi{U_L-L\geq\varepsilon}\,dD_L>0\}=\infty$ a.s.\ for any $\varepsilon>0$, i.e., $U_L-L<\varepsilon$ \nbd{dD_L}a.e.\ with probability one, implying the claim. \eqref{(U_L-L)dD_L=0} still holds without right-continuity of $U_L-L$, as long as $L$ is upper-semi-continuous in expectation; see Remark \ref{rem:eqlLusc} in the appendix.
}
\begin{equation}\label{(U_L-L)dD_L=0}
\int_{[0,\infty]}(U_L(t)-L_t)\,dD_L(t)=0\quad\text{a.s.}
\end{equation}

\section{Equilibria in pure strategies}\label{sec:eqlpure}

In symmetric games with systematic second-mover advantage $F\geq L$, it is straightforward to identify certain subgame-perfect equilibria in pure strategies. Player $j$, say, just has to stop sufficiently late, such that $i$ will solve the problem of optimally stopping $L$ presented in Section \ref{subsec:optstop}. We show in this section that such pure strategy equilibria typically entail asymmetric payoffs, however. The respective roles of the players have to be determined before the game starts, and correspondingly who obtains the higher payoff. With mixed strategies that we will consider thereafter,  equilibria with symmetric payoffs are obtained that do not create another strategic conflict outside the model.

Stopping sufficiently late to support a pure strategy equilibrium need not be ``never'': Whenever it is optimal to stop $L$, it simply must not be worthwhile for player $i$ to wait until $j$ stops, in order to become follower then. This will be the case, e.g., if $j$ only stops when $F=L$~-- like at $\tau_j\equiv\infty$.

The easiest example is thus $G_j^\vartheta$ given by $\indi{t=\infty}$ and $G_i^\vartheta$ by $\indi{t\geq\tau_L^{*}(\vartheta)}$ for every $\vartheta\in\T$, or analogously with $\tau_L^{**}(\vartheta)$ defined in \eqref{tauopt}. In either case, waiting is indeed optimal for player $j$ on $[0,\infty)$, because it is never strictly better to realize $L$ before an optimal stopping time, and $F$ dominates $L$ at both $\tau_L^{*}(\vartheta)$ and $\tau_L^{**}(\vartheta)$.

There can also be quite complex patterns based on the same logic, but with players switching roles across subgames. This can be illustrated best with a little more structure like in Example \ref{exm:attrition}, where the follower's optimal stopping times are ``sufficiently late'' for an equilibrium. However, the arguments are more general: The exploited properties are $F\geq L\geq M$ and that $F$ is a supermartingale, i.e., becoming follower sooner is better than later.\footnote{%
These also hold in \emph{phases} with second-mover advantage of typical market entry games; see \cite{Steg18}.
} 
Then finding a stopping time that is optimal from $\tau_L^{*}(\vartheta)$ on is enough (for any $G_j^\vartheta$).

\begin{lemma}\label{lem:tauLdom}
Suppose $F\geq L\geq M$ and that $F$ is a supermartingale. For any $\vartheta\in\T$, standard mixed strategy $G_j^\vartheta$ in the corresponding subgame, and stopping time $\tau_i\geq\vartheta$ it holds that
\begin{flalign*}
&& E&\Bigl[S_i^\vartheta\bigl(\bigl(\tau_i\vee\tau_L^{*}(\vartheta)\bigr)+\bigr)\Bigv\F_\vartheta\Bigr]\geq E\Bigl[S_i^\vartheta\bigl(\tau_i\bigr)\Bigv\F_\vartheta\Bigr] &&\\
\text{and}\displaybreak[0]\\
&& E&\Bigl[S_i^\vartheta\bigl(\tau_L^{*}(\vartheta)+\bigr)\Bigv\F_\vartheta\Bigr]\geq E\Bigl[L_{\tau_L^{*}(\vartheta)}\Bigv\F_\vartheta\Bigr]\qquad\qquad\text{a.s.}&&
\end{flalign*}
So, if a stopping time $\tau_i^*\geq\tau_L^{*}(\vartheta)$ attains $\esssup_{\tau\geq\tau_L^{*}(\vartheta)}E[S_i^\vartheta(\tau)\mid\F_{\tau_L^{*}(\vartheta)}]$, then it also attains $E[S_i^\vartheta(\tau_i^*)\mid\F_\vartheta]=\esssup_{\tau\geq\vartheta}E[S_i^\vartheta(\tau)\mid\F_\vartheta]\geq E[L_{\tau_L^{*}(\vartheta)}\mid\F_\vartheta]$. All claims also hold with $\tau_L^{**}(\vartheta)$ instead.
\end{lemma}

\noindent
{\it Proof:} In Appendix \ref{app:miscproofs}.
\medskip

Note that the supermartingale property of $F$ is important for the result, to ensure relatively high payoffs for the case of becoming follower before the optimum of $L$ is reached. It is not enough that there are even strictly better future stopping times for $L$ and that $F\geq L$: If $G_j^\vartheta$ puts mass between $\vartheta$ and $\tau_L^{*}(\vartheta)$, when $F$ still dominates $L$ but when both are very low, then it may be worthwhile to secure the current payoff $L_\vartheta$ due to the risk of becoming follower while waiting for the optimum of $L$. An alternative condition would be that $L$ is a \emph{submartingale} on $[\vartheta,\tau_L^{*}(\vartheta)]$.

In Example \ref{exm:attrition}, $L=\int_0^\cdot\pi^D\,ds$ is the duopolists' payoff process. Then the optimal stopping times in the follower's problem are sufficiently late to support an equilibrium: the perspective to become follower (monopolist) at a time when immediate exit is optimal has no value, and it leads to ceding when $\pi^D$ seems an unsustainable loss~-- at $\tau_L^{*}(\vartheta)$. Indeed, as a monopolist stops $\int_0^\cdot\pi^M\,ds$ with $\pi^M\geq\pi^D$, that optimal stopping time satisfies $\tau_F(\vartheta)\geq\tau_L^{*}(\vartheta)$. Furthermore, it holds that $F=L=M$ a.s.\ at $\tau_F(\vartheta)$, so in particular simultaneous stopping is feasible on $\{\tau_L^{*}(\vartheta)=\tau_F(\vartheta)\}$ by $F=M$. These properties generate a whole class of equilibria with varying roles of the players, decided by events $C$ at $\tau_L^{*}(\vartheta)$.

\begin{proposition}\label{prop:pureeql}
Suppose $F\geq L\geq M$ and that $F$ is a supermartingale. Let $\vartheta\in\T$ and consider a stopping time $\tau_F(\vartheta)\geq\tau_L^{*}(\vartheta)$ a.s., such that at $\tau_F(\vartheta)$ we have $F=L$, and more specifically $F=M$ on the set $\{\tau_F(\vartheta)=\tau_L^{*}(\vartheta)\}$ (a.s.)~-- e.g., $\tau_F(\vartheta):=\inf\{t\geq\vartheta\mid F_t=M_t\}$. Then, for any given event $C\in\F_{\tau_L^{*}(\vartheta)}$, the pure strategies corresponding to
\begin{equation*}
\tau_1^*=\tau_L^{*}(\vartheta)\indi{C}+\tau_F(\vartheta)\indi{C^c}\quad\text{and}\quad\tau_2^*=\tau_L^{*}(\vartheta)\indi{C^c}+\tau_F(\vartheta)\indi{C}
\end{equation*}
form an equilibrium in the subgame starting at $\vartheta$.
\end{proposition}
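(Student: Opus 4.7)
The plan is to invoke Lemma \ref{lem:BRpure}, which says it suffices to verify that each proposed pure strategy attains
\[
\esssup_{\vartheta\leq\tau\in\T}E\bigl[S_i^\vartheta(\tau)\bigv\F_\vartheta\bigr].
\]
By symmetry it is enough to check player 1. Since $C\in\F_{\tau_L(\vartheta)}$, one can argue separately on $C$ and $C^c$. Against the opponent's pure strategy $\tau_2^*$, the process $S_1^\vartheta$ in \eqref{Si} reduces to $L_t$ on $\{t<\tau_2^*\}$, to $M_{\tau_2^*}$ at $t=\tau_2^*$, and to $F_{\tau_2^*}$ on $\{t>\tau_2^*\}$. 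Also, Lemma \ref{lem:tauLdom} immediately rules out any candidate $\tau<\tau_L(\vartheta)$: it is dominated by $\tau\vee\tau_L(\vartheta)$, shifted infinitesimally, so I may restrict attention to $\tau\geq\tau_L(\vartheta)$.

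On the event $C$, $\tau_2^*=\tau_F(\vartheta)\geq\tau_L(\vartheta)$ and the proposal is $\tau_1^*=\tau_L(\vartheta)$. For $\tau_L(\vartheta)\leq\tau<\tau_F(\vartheta)$ the payoff is $L_\tau$, whose conditional expectation is bounded by the Snell envelope, $U_L(\tau_L(\vartheta))=L_{\tau_L(\vartheta)}$, since $\tau_L(\vartheta)$ is the smallest optimal stopping time for $L$. At $\tau=\tau_F(\vartheta)$ the payoff is $M_{\tau_F(\vartheta)}\leq L_{\tau_F(\vartheta)}=F_{\tau_F(\vartheta)}$ (with equality $M=F$ on the subset where $\tau_F(\vartheta)=\tau_L(\vartheta)$), and for $\tau>\tau_F(\vartheta)$ the payoff is $F_{\tau_F(\vartheta)}=L_{\tau_F(\vartheta)}$; in both cases the conditional expectation given $\F_{\tau_L(\vartheta)}$ is at most $U_L(\tau_L(\vartheta))=L_{\tau_L(\vartheta)}$ by the supermartingale property of $U_L$ dominating $L$. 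Hence $\tau_1^*=\tau_L(\vartheta)$ attains the essential supremum on $C$.

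On the event $C^c$, $\tau_2^*=\tau_L(\vartheta)$ and the proposal is $\tau_1^*=\tau_F(\vartheta)\geq\tau_L(\vartheta)$. Stopping strictly after $\tau_L(\vartheta)$ yields the follower payoff $F_{\tau_L(\vartheta)}\geq L_{\tau_L(\vartheta)}$, while on the subset $\{\tau_F(\vartheta)=\tau_L(\vartheta)\}$ the two strategies coincide and the simultaneous payoff is $M_{\tau_L(\vartheta)}=F_{\tau_L(\vartheta)}$ by the hypothesis that $F=M$ there. Hence $\tau_1^*$ realises $F_{\tau_L(\vartheta)}$. Any competing $\tau\geq\tau_L(\vartheta)$ yields either $M_{\tau_L(\vartheta)}\leq F_{\tau_L(\vartheta)}$ (if $\tau=\tau_L(\vartheta)$) or $F_{\tau_L(\vartheta)}$ (if $\tau>\tau_L(\vartheta)$). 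Combining the two events completes the verification for player 1, and symmetry finishes the proof. The main delicate point is the boundary $\{\tau_L(\vartheta)=\tau_F(\vartheta)\}$, where the roles collapse and simultaneous stopping occurs; the hypothesis $F=M$ there is precisely what prevents a loss relative to the leader/follower payoff $L=F$ and is used in both cases.
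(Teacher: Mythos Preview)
Your proof is correct and follows essentially the same route as the paper's: both reduce to optimal stopping via Lemma~\ref{lem:BRpure}, invoke Lemma~\ref{lem:tauLdom} to discard stopping times before \(\tau_L(\vartheta)\), and then split into the two events \(C\) and \(C^c\), using optimality of \(\tau_L(\vartheta)\) for \(L\) on the one and the constancy of \(S^\vartheta_1\) at \(F_{\tau_L(\vartheta)}\) past \(\tau_L(\vartheta)\) on the other. Two small omissions relative to the paper: you do not verify that \(\tau_1^*\) and \(\tau_2^*\) are actually stopping times (the paper appeals to Lemma~\ref{lem:tauC}, using \(C\in\F_{\tau_L(\vartheta)}\) and \(\tau_L(\vartheta)\leq\tau_F(\vartheta)\) a.s.), and you do not check the proposed example \(\tau_F(\vartheta):=\inf\{t\geq\vartheta\mid F_t=M_t\}\) satisfies the hypotheses, which the paper does via the supermartingale property of \(F\) dominating \(U_L\).
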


\noindent
{\it Proof:} In Appendix \ref{app:miscproofs}.
\medskip

Equilibria in pure strategies typically involve asymmetric payoffs, for instance if $F>L$ at $\tau_L^{*}(\vartheta)$ in those that we have specified. Consequently, there arises a coordination problem before the start of the game, each player wanting to become follower eventually. This problem is even aggravated in the equilibria of Proposition \ref{prop:pureeql}, where the roles may switch across subgames. For this reason, such equilibria are also difficult to aggregate for a subgame-perfect equilibrium: for each subgame starting at some $\vartheta\in\T$, an event $C\in\F_{\tau_L^{*}(\vartheta)}$ has to be agreed on that determines the respective roles.

Maybe even more importantly, no player can obtain the preferred follower payoff by taking or threatening to take a certain action, but only by the threat of taking \emph{no} action for a longer time, which has to induce the opponent to stop. Effectively, players compete in the credibility of taking no action. Such problems can be avoided by allowing for mixed strategies, making the players indifferent about the roles when stopping occurs. This is our topic in the following.

\section{Equilibria in mixed strategies}\label{sec:eqlmix}

The very universal principle of the Snell envelope allows us to construct equilibria in mixed strategies in our general setting. But we do not only obtain existence: the equilibrium strategies can be clearly interpreted like the Snell envelope itself. Recall that the compensator relates to the expected loss from stopping too late. 

The logic of the following equilibria in symmetric games is this: If $F\geq L$, but without the other conditions of Lemma \ref{lem:tauLdom}, then waiting for future optimal times to stop $L$ is not necessarily \emph{always} optimal in the game. Nevertheless, the players have an incentive and the possibility to coordinate on waiting, which can be extended until the latest optimal time to stop $L$, $\tau_L^{**}(\vartheta)$. To cross that point, however, any player has to be compensated by some chance to become follower when $F>L$, because otherwise any delay would definitely be costly. Of course, the opponent has to be willing to provide that chance, so we identify the suitable rate to compensate \emph{exactly} the impending loss $dD_L>0$ and make both players indifferent.

This principle does not work when $L>F$, however, when the players would want to stop much more intensely due to preemption incentives (see Section \ref{sec:eqlsym}). On the other hand, even if we were considering games without first-mover advantage, then there might be equilibria with even higher symmetric payoffs~-- if simultaneous stopping is feasible and sufficiently profitable at some future time, precisely when $M\geq F>L$. For these reasons, we need to generalize the appropriate payoff process that the players coordinate on.

\begin{theorem}\label{thm:mixedeql}
Let $\vartheta,\tau^\vartheta\in\T$ be stopping times with $\tau^\vartheta\in[\vartheta,\,\inf\{t\geq\vartheta\mid L_t>F_t\}]$ a.s. Define the auxiliary process $\tilde L^{\tau^\vartheta}$ by $\tilde L_t^{\tau^\vartheta}:=\indi{t<\tau^\vartheta}L_t+\indi{t\geq\tau^\vartheta}\max(F_{\tau^\vartheta},M_{\tau^\vartheta})$, let $D_{\tilde L}^{\tau^\vartheta}$ denote the compensator of its Snell envelope and $\tau_i^\vartheta:=\inf\{t\geq\vartheta\mid\int_{[\vartheta,t]}\indi{F\leq L}\,dD_{\tilde L}^{\tau^\vartheta}>0\}\wedge\tau^\vartheta$. 

Then there exists a payoff-symmetric equilibrium in the subgame starting at $\vartheta$ with standard mixed strategies given by 
\begin{flalign}
&& G_i^\vartheta(t)&=1-\indi{t<\tau_i^\vartheta}\exp\biggl(-\int_\vartheta^t\frac{\indi{F_s>L_s}\,dD_{\tilde L}^{\tau^\vartheta}(s)}{F_s-L_s}\biggr) & \label{Geql}\\
&\text{and} \nonumber\displaybreak[0]\\
&& G_j^\vartheta(t)&=1-\indi{t<\tau^\vartheta}\exp\biggl(-\int_\vartheta^t\frac{\indi{F_s>L_s}\,dD_{\tilde L}^{\tau^\vartheta}(s)}{F_s-L_s}\biggr) & \label{Gjeql}
\end{flalign}
for $i,j\in\{1,2\}$, $i\neq j$, if and only if a.s.\ $\Delta G_i^\vartheta(M-F)\leq 0$ at $\tau_i^\vartheta$ on $\{\tau_i^\vartheta<\tau^\vartheta\}$ and $\Delta G_i^\vartheta(M-F)\geq 0$ at $\tau^\vartheta$.

Moreover, there exists a symmetric equilibrium with both players using the strategy given by \eqref{Geql} if and only if a.s.\ $\Delta G_i^\vartheta(M-F)\geq 0$ at $\tau_i^\vartheta$, with equality on $\{\tau_i^\vartheta<\tau^\vartheta\}$ whenever $\Delta G_i^\vartheta(\tau_i^\vartheta)<1$.
\end{theorem}

\noindent
{\it Proof:} In Appendix \ref{app:miscproofs}.
\medskip

On $\{L_\vartheta>F_\vartheta\}$, $\tau^\vartheta=\vartheta$, and $M_\vartheta\geq F_\vartheta$ is required to support simultaneous stopping. On $\{F_\vartheta\geq L_\vartheta\}$, the ``terminal condition'' at $\tau_i^\vartheta$ seems contradictory, but it plays the following role. First consider $\Delta G_i^\vartheta(\tau^\vartheta)>0$, so $\tau_i^\vartheta=\tau^\vartheta$, and there is a joint terminal jump. This means that the players coordinate on the terminal payoff $M_{\tau^\vartheta}$, which again requires $M_{\tau^\vartheta}\geq F_{\tau^\vartheta}$ (where $M_\infty=F_\infty$ by convention). $G_i^\vartheta$ can also jump to one before $\tau^\vartheta$: when $F=L$ and $dD_{\tilde L}^{\tau^\vartheta}>0$, such that no compensation is possible. Then player $i$ stops, but we keep $G_j^\vartheta$ continuous to address the case $F=L>M$, and payoffs are hence symmetric. This choice can only be an equilibrium, however, if indeed $F\geq M$. Otherwise, player $j$ could obtain a higher payoff by stopping at $\tau_i^\vartheta$ and not supporting the equilibrium earlier on, but we could then adjust $\tau^\vartheta$ to ensure suitable continuation values. Finally, if $G_i^\vartheta$ reaches the value one continuously, then $G_j^\vartheta(t)=G_i^\vartheta(t)$ for all $t\in\R_+$. This case is one reason for defining $\tilde L^{\tau^\vartheta}$ with $\max(F_{\tau^\vartheta},M_{\tau^\vartheta})$: Then $\Delta G_i^\vartheta(M-F)\geq 0$ holds at $\tau^\vartheta$ by $\Delta G_i^\vartheta=0$, which allows $\tilde L^{\tau^\vartheta}$ to have the terminal value $F_{\tau^\vartheta}>M_{\tau^\vartheta}$ and affects the continuation values before. Another reason is that we will obtain continuation equilibria with payoff $\max(F_{\tau^\vartheta},M_{\tau^\vartheta})$ when considering \emph{extended} mixed strategies in Section \ref{sec:eqlsym}.

Except for possible terminal jumps, the strategies in Theorem \ref{thm:mixedeql} are continuous. As motivated before and given the appropriate process $\tilde L^{\tau^\vartheta}$, the opponent's stopping rate $dD_{\tilde L}^{\tau^\vartheta}/(F-L)$ makes each player indifferent when it would seem optimal to secure the current value of $L$. The expected loss from forgoing it is exactly compensated by the probability of obtaining $F>L$. The resulting equilibrium payoffs are given by
\begin{equation*}
V_i^\vartheta(G_i^\vartheta,G_j^\vartheta)=V_j^\vartheta(G_j^\vartheta,G_i^\vartheta)=\esssup_{\tau\in\T\colon\tau\geq\vartheta}E\Bigl[\indi{\tau<\tau^\vartheta}L_\tau+\indi{\tau\geq\tau^\vartheta}\max(F_{\tau^\vartheta},M_{\tau^\vartheta})\Bigv\F_\vartheta\Bigr]=:U_{\tilde L}^{\tau^\vartheta}(\vartheta),
\end{equation*} 
resp., if both players use $G_i^\vartheta$ and this is an equilibrium, by
\begin{align*}
V_i^\vartheta(G_i^\vartheta,G_i^\vartheta)=U_{\tilde L}^{\tau^\vartheta}(\vartheta)+E\Bigl[\indi{\tau_i<\tau^\vartheta}\indi{\Delta G_i(\tau_i)>0}\bigl(M_{\tau_i}-F_{\tau_i}\bigr)\Bigv\F_\vartheta\Bigr].
\end{align*}
These can be rewritten using the first time that a delay becomes costly for $\tilde L^{\tau^\vartheta}$, denoted by $\tau_{\tilde L}^{**}(\vartheta):=\inf\{t\geq\vartheta\mid D_{\tilde L}^{\tau^\vartheta}(t)>D_{\tilde L}^{\tau^\vartheta}(\vartheta)\}$; cf.\ \eqref{U_L=L}. Then $V_i^\vartheta(G_i^\vartheta,G_j^\vartheta)=E[\tilde L_{\tau_{\tilde L}^{**}(\vartheta)}^{\tau^\vartheta}\mid\F_\vartheta]$ and
\begin{equation*}
V_i^\vartheta(G_i^\vartheta,G_i^\vartheta)=E\Bigl[\tilde L_{\tau_{\tilde L}^{**}(\vartheta)}^{\tau^\vartheta}+\indinb{\{\tau_{\tilde L}^{**}(\vartheta)<\tau^\vartheta\}\cap\{L_{\tau_{\tilde L}^{**}(\vartheta)}=F_{\tau_{\tilde L}^{**}(\vartheta)}\}}\Bigl(M_{\tau_{\tilde L}^{**}(\vartheta)}-\tilde L_{\tau_{\tilde L}^{**}(\vartheta)}^{\tau^\vartheta}\Bigr)\Bigv\F_\vartheta\Bigr].
\end{equation*}

The proof of Theorem \ref{thm:mixedeql} is based on martingale arguments. An important aspect is to take care of the different kinds of jumps in the strategies and to ensure that the underlying payoff process $\tilde L^{\tau^\vartheta}$ has the necessary properties (e.g., that $D_{\tilde L}^{\tau^\vartheta}$ is continuous). Moreover, when stopping happens continuously, this may be at a rate with respect to time ($dt$) like in the explicit Brownian example in Section \ref{sec:expduo}, but not necessarily, and continuous strategies can also charge a set of time points of measure zero.\footnote{%
In this case, $dG_i^\vartheta$ would be a \emph{singular} measure, which often appear in optimal control of Brownian models.
}

The present equilibrium strategies are trivial~-- given that the endpoint is feasible~-- if $L$ or $F$ is a (sub-)martingale on $[\vartheta,\tau^\vartheta]$; then there is no loss from waiting and $dD_{\tilde L}^{\tau^\vartheta}\equiv 0$.\footnote{%
Cf.\ Theorem 3.3 in \cite{RiedelSteg17} for this case, but also their Section 4.3 on issues in asymmetric games.
}

\begin{remark}
It may happen that $D_{\tilde L}$~-- and hence $G_i^\vartheta$~-- has jumps if $L$ is only upper-semi-continuous from the right (and the left). In Theorem \ref{thm:mixedeql}, waiting is always at least as good as obtaining $L$ and there must be indifference at increases of $D_{\tilde L}$. Joint mass points are therefore impossible when $L>M$.\footnote{%
\emph{Example}: No symmetric payoff equilibrium if $L$ ($>M$) not right-continuous.

\medskip
\begin{minipage}[c]{0.25\linewidth}
\centering
   \begin{tikzpicture}[inner sep=0pt,minimum size=0pt,label distance=3pt]
    \draw[->] (-0.1,0) -- (3,0) {}; 
    \draw[->] (0,-0.3) -- (0,2) {}; 
    \draw[-] (0.8,0.2) -- (2.8,1) [] {};
    \draw[-] (0.8,1.8) -- (2.8,1) [] {};
    \draw[-] (0.8,1) -- (1.6,1.2) []{};
    \draw[-] (1.6,1) -- (2.8,1)[]{};
    \fill[black] (1.6,1.2) circle (.04);
    \filldraw[fill=white,draw=black] (1.6,1) circle (.04);

    \node at (0.8,1.8) [label=left:$F$] {};
    \node at (0.8,1) [label=left:$L$] {};
    \node at (0.8,0.2) [label=left:$M$] {};
    \node at (1.6,0) [label=below:$T_1$] {};
    \node at (2.8,0) [label=below:$T_2$] {};
  \end{tikzpicture}
\end{minipage}
\hfill
\begin{minipage}[c]{0.7\linewidth}
Waiting is strictly optimal for player $i$ at $t\in(T_1,T_2)$ if $G_j(T_2-)>G_j(t)$, so $G_i(T_2-)=G_i(T_1)$ and $G_j(T_2-)=G_j(T_1)$ by payoff symmetry, giving a continuation payoff $L(T_2)$ on $(T_1,T_2]$. The only symmetric continuation payoff at $T_1$ is then in $(L(T_2),L(T_1))$ from $\Delta G_i(T_1)=\Delta G_j(T_1)\in(0,1)$. Waiting is also strictly dominant on $[0,T_1)$, but stopping short of $T_1$ now yields a higher payoff than stopping at $T_1$.
\end{minipage}
}
However, Theorem \ref{thm:mixedeql} remains true if $L\equiv M$ (e.g., in an attrition model); see Remark \ref{rem:eqlLusc} in the appendix.
\end{remark}

The equilibria of Theorem \ref{thm:mixedeql} can so far only deal with subgames satisfying $M_\vartheta\geq F_\vartheta$ on $\{L_\vartheta>F_\vartheta\}$. Therefore, if we want to aggregate them for a subgame-perfect equilibrium, we have to assume this for all subgames for the moment. Then setting $\tau^\vartheta=\inf\{t\geq\vartheta\mid L_t>F_t\text{ or }M_t>F_t\}$ will satisfy the existence conditions. In the special case that $F\geq\max(L,M)$ throughout, this implies simply $\tau^\vartheta\equiv\infty$ and then $\tilde L^{\tau^\vartheta}=L$ and $D_{\tilde L}^{\tau^\vartheta}=D_L$. In this case, the stopping rates do not depend on $\vartheta$, which ensures time-consistency. 

In general, however, we may have $D_{\tilde L}^{\tau^\vartheta}\neq D_L$ on $[\vartheta,\tau^\vartheta]$, specifically when $\max(F,M)>L$ or $\max(F,M)<U_L$ at $\tau^\vartheta<\infty$. Then time-consistency requires that $\tau^\vartheta$ will not be changed if it has not been passed, yet: For any two $\vartheta,\vartheta'\in\T$, we should have $\tau^\vartheta=\tau^{\vartheta'}$ on $\{\vartheta\leq\vartheta'\leq\tau^\vartheta\}$ and vice versa, or in summary $\tau^\vartheta=\tau^{\vartheta'}$ on $\{(\vartheta\vee\vartheta')\leq(\tau^\vartheta\wedge\tau^{\vartheta'})\}$.

Then we indeed obtain payoff-symmetric subgame-perfect equilibria with standard mixed strategies for games, e.g., with systematic second-mover advantage.

\begin{theorem}\label{thm:SPE}
Fix $i,j\in\{1,2\}$, $i\neq j$. If $(G_i^\vartheta;\vartheta\in\T)$, $(G_j^\vartheta;\vartheta\in\T)$ are such that for every $\vartheta\in\T$, $G_i^\vartheta$, $G_j^\vartheta$ satisfy the conditions for an equilibrium in the subgame starting at $\vartheta$ as in Theorem \ref{thm:mixedeql}, and the associated $(\tau^\vartheta;\vartheta\in\T)$ satisfy $\tau^\vartheta=\tau^{\vartheta'}$ a.s.\ on $\{(\vartheta\vee\vartheta')\leq(\tau^\vartheta\wedge\tau^{\vartheta'})\}$ for all $\vartheta,\vartheta'\in\T$, then $(G_i^\vartheta;\vartheta\in\T)$ and $(G_j^\vartheta;\vartheta\in\T)$ represent a subgame-perfect equilibrium in standard mixed strategies. This is the case, e.g., if $\max(F_t-L_t,M_t-F_t)\geq 0$ for all $t\in\R_+$ a.s.\ and $\tau^\vartheta=\inf\{t\geq\vartheta\mid L_t>F_t\text{ or }M_t>F_t\}$ for every $\vartheta\in\T$.
\end{theorem}

\noindent
{\it Proof:} In Appendix \ref{app:miscproofs}.
\medskip

Even if the required time-consistency condition for the family $(\tau^\vartheta;\vartheta\in\T)$ holds, the family $(D_{\tilde L}^{\tau^\vartheta};\vartheta\in\T)$ needs to induce time-consistent stopping rates $(dG_i^\vartheta;\vartheta\in\T)$. This is the main point of (the proof of) Theorem \ref{thm:SPE}, given optimality by Theorem \ref{thm:mixedeql}.

\section{Example: Exit from duopoly}\label{sec:expduo}

In this section, we illustrate the simplification resulting from a systematic second-mover advantage pointed out in the context of Theorem \ref{thm:mixedeql}. Specifically, we determine subgame-perfect equilibrium strategies by explicitly deriving the Snell envelope $U_L$ and its compensator $D_L$ for a version of the market exit game in Example \ref{exm:attrition}. The stopping rate during attrition is then represented in terms of a sustained flow of losses from unprofitable operations. 

To specify the model, assume that at each time $t$, discounted duopoly profits are given by
\begin{equation*}
\pi^D_t = e^{-rt}(Y_t-c),
\end{equation*}
where $c>0$ is a constant operating cost and revenues $(Y_t)_{t\geq 0}$ follow a geometric Brownian motion solving $dY_t=\mu Y_t\,dt+\sigma Y_t\,dB_t$. The profit stream of a remaining monopolist is
\begin{equation*}
\pi^M_t = e^{-rt}(mY_t-c),
\end{equation*}
where $m>1$. Each firm can decide to leave the market with accumulated payoff $L_t=M_t=\int_0^te^{-rs}(Y_s-c)\,ds$, for example when $Y$ is so low that revenue does not cover production costs. In such a phase, the game is a war of attrition if monopoly still seems profitable. However, it may also be optimal to stop immediately in the follower's problem with payoff 
\begin{equation*}
F_t=L_t+\esssup_{\tau^F\in\T\colon\tau^F\geq t}E\biggl[\int_t^{\tau^F}e^{-rs}(mY_s-c)\,ds\biggv\F_t\biggl].
\end{equation*}
The latter problem is a standard exercise under the condition $r>\max(\mu,0)$,\footnote{%
This is also necessary and sufficient for the processes to be of class {\rm (D)} in accordance with Assumption \ref{asm:payoffs}. Then $-c/r\leq L_t\leq\int_0^\infty e^{-rs}\abs{Y_s-c}\,ds\in L^1(P)$ and similarly for $F$, inserting $m$.
}
and its unique solution is to stop as soon as $Y$, starting from $Y_t$, falls below the threshold
\begin{equation*}
y_m = \frac{\beta_2}{\beta_2-1}\frac{r-\mu}{r}\frac{c}{m}<\frac{c}{m},
\end{equation*}
where $\beta_2$ is the negative root of the quadratic equation $\frac{1}{2}\sigma^2\beta(\beta-1)+\mu\beta-r=0$. The value of the stopping problem can be explicitly expressed as
\begin{equation}\label{Fexpl}
F_t-L_t=e^{-rt}\indi{Y_t>y_m}\biggl[\frac{mY_t}{r-\mu}-\frac{c}{r}-\left(\frac{Y_t}{y_m}\right)^{\beta_2}\left(\frac{my_m}{r-\mu}-\frac{c}{r}\right)\biggr],
\end{equation}
which shows that $F$ is a continuous process, $F_\tau$ corresponds to an optimal follower decision from $\tau\in\T$, and $F_t=L_t\,(=M_t)\Leftrightarrow Y_t\leq y_m$. Hence, for any equilibrium as in Theorem \ref{thm:mixedeql}, we need $\tau^\vartheta\geq\inf\{t\geq\vartheta\mid Y_t\leq y_m\}$ for the endpoint condition. On the other hand, stopping is strictly dominant for a monopolist as soon as $Y_t\leq y_m$, and so it is in duopoly, where revenues can never exceed those in monopoly. Therefore, we can choose $\tau^\vartheta=\inf\{t\geq\vartheta\mid Y_t\leq y_m\}$ without loss of generality and it will lead to a symmetric equilibrium at any $\vartheta\in\T$ as follows. 

As $F$ is a supermartingale by $m\geq 1$ and dominates $L$, it also dominates the Snell envelope $U_L$ of the latter, such that we have $F=L\Rightarrow F=U_L=L$. Consequently, $\tilde L^{\tau^\vartheta}$ from Theorem \ref{thm:mixedeql} is here just $L$ stopped at $\tau^\vartheta$, and the Snell envelope $U_{\tilde L}^{\tau^\vartheta}$ coincides with $U_L$ until $\tau^\vartheta$. Applying the right-hand side of \eqref{Fexpl} with $m=1$ yields the solution to optimally stopping the leader (duopoly) payoff:
\begin{align*}
U_L(t)={}&\esssup_{\tau\in\T\colon\tau\geq t}E[L_\tau\mid\F_t]=L_t+e^{-rt}\indi{Y_t>y_1}\biggl[\frac{Y_t}{r-\mu}-\frac{c}{r}-\left(\frac{Y_t}{y_1}\right)^{\beta_2}\left(\frac{y_1}{r-\mu}-\frac{c}{r}\right)\biggr].
\end{align*}
Applying It\=o's lemma shows that the monotone part of the supermartingale $U_L$ is just the drift
\[dD_L(t)=-\indi{Y_t<y_1}\,dL_t=\indi{Y_t<y_1}e^{-rt}(c-Y_t)\,dt
\]
when stopping $L$ immediately is optimal. With $\tau^\vartheta=\inf\{t\geq\vartheta\mid Y_t\leq y_m\}$ for every $\vartheta\in\T$, $dD^{\tau^\vartheta}_{\tilde L}=dD_L$, and \eqref{Fexpl}, we now have a fully explicit symmetric subgame-perfect equilibrium, with payoffs $V_i^\vartheta(G_i^\vartheta,G_j^\vartheta)=U_L(\vartheta)$, respectively.

As $y_m<y_1<c$, we see that $dD_L$ is simply the stream of losses resulting from unprofitable operations. If a duopolist never hoped to become monopolist, these losses would be too large to keep operating. Here, whenever $Y\in(y_m,y_1)$, both firms are leaving duopoly at a rate that depends directly on those running losses; it is decreasing in $Y$. The state may rise next to the region $(y_1,c)$. Then there are still running losses, but the firms suspend mixing because the option to wait for a market recovery is sufficiently valuable. There is thus no need for a compensation. Typically, there will be alternating periods of continuous and no mixing. When the state drops to $[0,y_m]$, however, the option to wait for market recovery would be worthless in the face of running losses even if a firm was (sure to become) monopolist, and both firms quit immediately.

\section{Equilibria for general symmetric games}\label{sec:eqlsym}

\subsection{Preemption with extended mixed strategies}\label{subsec:extended}

In a preemption situation, i.e., when there is a first-mover advantage $L>F$, there typically exist no equilibria in pure strategies in continuous time. \cite{FudenbergTirole85} and \cite{HendricksWilson92} show that this issue arises when there is an incentive to wait ($L$ is increasing). If the model is sufficiently regular and the first-mover advantage is strict, then one may have equilibria in standard mixed strategies, with one player stopping immediately and the other stopping at a sufficient rate, such that the first would not be able to realize the increase in $L$. The payoffs are then asymmetric, $L$ and $F$. However, these equilibria cannot be extended to the boundary of the preemption region; if $L=F$, then the stopping rate needed to support any equilibrium explodes. This observation does not depend on any regularity conditions~-- the payoff processes can be deterministic and arbitrarily smooth.

Therefore, if we want to allow for any equilibria when preemption is about to start (or also symmetric payoffs when $L>F$), then we need to enrich the strategy and outcome spaces. The key is to facilitate some partial coordination when players try to stop at the same time, but when simultaneous stopping would be the worst outcome. Therefore, we are now going to use the strategy extensions $\alpha_i^\vartheta$ from Definition \ref{def:alpha}. With these extended strategies, it is possible to capture continuous-time limits of symmetric, mixed discrete-time equilibria, which do not have the previous issues.\footnote{%
In discrete time, there can be equilibria with a positive probability of simultaneous stopping even if that is the worst outcome, because the players can only assign positive probabilities to the single periods; one cannot circumvent coordination failure by stopping an arbitrarily small time $\varepsilon>0$ after a mass point of the opponent. See \cite{Steg17} for a discretized preemption model and a formal limit analysis.
}

We then obtain the following equilibria of immediate stopping for subgames with a first-mover advantage~-- here for a symmetric game:\footnote{%
The present extension of Proposition 3.1 in \cite{RiedelSteg17} to a nonempty set $\{M>F\}$ is straightforward in the symmetric case.
}

\begin{proposition}[{Cf.\ Proposition 3.1 in \cite{RiedelSteg17}}]\label{prop:eqlL>F}
Fix $\vartheta\in\T$ and suppose $\vartheta=\inf\{t\geq\vartheta\mid L_t>F_t\}$ a.s. Then $(G_1^\vartheta,\alpha_1^\vartheta)$, $(G_2^\vartheta,\alpha_2^\vartheta)$ defined by
\begin{equation*}
\alpha_i^\vartheta(t)=\begin{cases}
1 & \text{if}\quad M_t\geq F_t\text{ and }t=\inf\{s\geq t\mid L_s>F_s\},\\[6pt]
\displaystyle\indi{L_t>F_t}\frac{L_t-F_t}{L_t-M_t} & \text{else}
\end{cases}
\end{equation*}
for any $t\in[\vartheta,\infty)$ and $G_i^\vartheta=\indi{t\geq\vartheta}$, $i=1,2$, are an equilibrium in the subgame at $\vartheta$.

The resulting payoffs are $V_i^\vartheta(G_i^\vartheta,\alpha_i^\vartheta,G_j^\vartheta,\alpha_j^\vartheta)=\max(F_\vartheta,M_\vartheta)$.
\end{proposition}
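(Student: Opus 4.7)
The plan is to verify that the prescribed profile (i) yields each player the payoff $\max(F_\vartheta,M_\vartheta)$ and (ii) admits no profitable unilateral deviation. The structural observation driving everything is that $G^\vartheta_j(\vartheta)=1$ with $\alpha^\vartheta_j(\vartheta)>0$ whenever $L_\vartheta>F_\vartheta$ or $M_\vartheta\ge F_\vartheta$, so player $j$ ``stops at $\vartheta$ for sure''. Consequently $\hat\tau^\vartheta=\vartheta$ almost surely and only the three boundary probabilities $\lambda^\vartheta_{L,i},\lambda^\vartheta_{L,j},\lambda^\vartheta_M$ enter the payoff formula in Definition \ref{def:payoffs_extended}. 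Before anything else I would briefly check that the prescribed $\alpha^\vartheta_i$ meets the regularity required in Definition \ref{def:alpha}: adaptedness and progressive measurability follow from right-continuity of $L,F,M$ together with measurability of the random map $t\mapsto\inf\{u\ge t\mid L_u>F_u\}$ (a standard d\'ebut argument), and right-continuity where $\alpha^\vartheta_i<1$ is inherited from that of $(L-F)/(L-M)$ on $\{L>F\}$.

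Step one, the payoff. Read off the symmetric Fudenberg--Tirole probabilities assigned by the appendix to a joint atom with common intensity $\alpha\in(0,1]$, namely $\lambda^\vartheta_{L,i}=\lambda^\vartheta_{L,j}=(1-\alpha)/(2-\alpha)$ and $\lambda^\vartheta_M=\alpha/(2-\alpha)$, with the convention $\lambda^\vartheta_M=1$ for $\alpha=1$. On $\{M_\vartheta\ge F_\vartheta\}$ the definition sets $\alpha=1$, giving payoff $M_\vartheta=\max(F_\vartheta,M_\vartheta)$. On $\{M_\vartheta<F_\vartheta\}$ the hypothesis forces $L_\vartheta>F_\vartheta$ and $\alpha=(L_\vartheta-F_\vartheta)/(L_\vartheta-M_\vartheta)\in(0,1)$; a short calculation reduces
\[
\lambda^\vartheta_{L,i}L_\vartheta+\lambda^\vartheta_{L,j}F_\vartheta+\lambda^\vartheta_M M_\vartheta
\]
to $F_\vartheta=\max(F_\vartheta,M_\vartheta)$. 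This is not a coincidence: the particular $\alpha$ is designed to make $(L-F)-\alpha(L-M)$ vanish, which is the indifference condition that drives the rest of the proof.

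Step two, best response. Fix an alternative $(G^\vartheta_a,\alpha^\vartheta_a)$ for $i$. Because $j$ stops at $\vartheta$ almost surely, any mass of $G^\vartheta_a$ or $\alpha^\vartheta_a$ strictly after $\vartheta$ is payoff-irrelevant; only $a:=\alpha^\vartheta_a(\vartheta)$ and whether $G^\vartheta_a$ has an atom at $\vartheta$ matter. If $i$ does not stop at $\vartheta$ at all, they are the sole follower and collect $F_\vartheta\le\max(F_\vartheta,M_\vartheta)$. Otherwise the payoff becomes
\[
v(a)=\frac{a(1-\alpha)L_\vartheta+\alpha(1-a)F_\vartheta+a\alpha M_\vartheta}{a+\alpha-a\alpha},
\]
and factoring the numerator of $v(a)-F_\vartheta$ yields $a\bigl[(L_\vartheta-F_\vartheta)-\alpha(L_\vartheta-M_\vartheta)\bigr]/(a+\alpha-a\alpha)$. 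For $\alpha=(L_\vartheta-F_\vartheta)/(L_\vartheta-M_\vartheta)$ this vanishes identically, so $v$ is constant in $a$ and every choice is optimal; for $\alpha=1$ we get $v(a)=aM_\vartheta+(1-a)F_\vartheta$, maximised at $a=1$ on $\{M\ge F\}$. In both cases the equilibrium payoff is attained, which gives the best-response inequality.

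The step I expect to be the most delicate is the bookkeeping of the outcome probabilities $\lambda$ from Appendix \ref{app:outcome} for deviations that combine $G^\vartheta_a$ and $\alpha^\vartheta_a$ in non-generic ways (for instance $\Delta G^\vartheta_a(\vartheta)=1$ with $\alpha^\vartheta_a(\vartheta)=0$, versus $\Delta G^\vartheta_a(\vartheta)=0$ with $\alpha^\vartheta_a(\vartheta)>0$), together with the boundary event $\{L_\vartheta=F_\vartheta\}$ on which the defining ratio for $\alpha$ vanishes; one has to verify that the Appendix convention correctly assigns the deviator the follower role in each such case, so that the inequality $v(a)\le\max(F_\vartheta,M_\vartheta)$ survives at the boundary.
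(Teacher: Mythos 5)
The paper does not include its own proof of this proposition: it is imported as \cite{RiedelSteg14}, Proposition 3.1, and the surrounding Remark \ref{rem:eqlL>F} and the paragraph after it only sketch the logic. Your outline follows exactly that logic — the indifference identity $(L_\vartheta-F_\vartheta)-\alpha(L_\vartheta-M_\vartheta)=0$ that makes $v(a)$ constant on $\{L>F>M\}$, the dominance of $a=1$ on $\{M\geq F\}$, and the symmetric Fudenberg--Tirole probabilities $\mu_L(\alpha,\alpha)=(1-\alpha)/(2-\alpha)$, $\mu_M(\alpha,\alpha)=\alpha/(2-\alpha)$ — and the algebra you present checks out. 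Your reading of Definition \ref{def:outcome} for deviations ($a=0$ with a $G$-atom versus $a>0$ forcing $G=1$) is also the right bookkeeping.

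The one computation still owed is exactly the boundary you flag, $\{L_\vartheta=F_\vartheta>M_\vartheta\}$, which the hypothesis $\vartheta=\inf\{t\geq\vartheta\mid L_t>F_t\}$ does not exclude. There $\alpha^\vartheta_1(\vartheta)=\alpha^\vartheta_2(\vartheta)=0$, so the ``joint atom of common intensity $\alpha\in(0,1]$'' branch never fires and your $v(a)$ formula degenerates at $a=0$. One must invoke the fourth branch of Definition \ref{def:outcome}: the set $\{t>\vartheta\mid L_t>F_t\}$ accumulates at $\vartheta$ by the hypothesis, and $\alpha^\vartheta_\cdot(t)=(L_t-F_t)/(L_t-M_t)\to 0$ as $t\searrow\vartheta$ by right-continuity of $L,F,M$ together with $L_\vartheta-M_\vartheta>0$; hence $\mu_L(\alpha^\vartheta_i(t),\alpha^\vartheta_j(t))\to\tfrac{1}{2}$, giving $\lambda^\vartheta_{L,1}=\lambda^\vartheta_{L,2}=\tfrac{1}{2}$, $\lambda^\vartheta_M=0$, and payoff $\tfrac{1}{2}(L_\vartheta+F_\vartheta)=F_\vartheta=\max(F_\vartheta,M_\vartheta)$ — the 50/50 split the paper mentions right after Remark \ref{rem:eqlL>F}. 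Deviations on this event are harmless precisely because $L_\vartheta=F_\vartheta$ washes out the leader/follower distinction: $a>0$ gives $\mu_L(a,0)=1$ and payoff $L_\vartheta$; an atom $\Delta G^\vartheta_a(\vartheta)=g$ with $a=0$ falls in the second branch with $\alpha^\vartheta_j(\vartheta)=0$ and yields $gL_\vartheta+(1-g)F_\vartheta=F_\vartheta$. So the best-response inequality survives, but it genuinely requires the $\liminf$/$\limsup$ machinery of Definition \ref{def:outcome} and cannot be read off the third branch alone.
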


\begin{remark}\label{rem:eqlL>F}
When $L>F\geq M$, then the choice of $\alpha_i^\vartheta$ makes the respective other player indifferent between stopping and waiting, and $\alpha_i^\vartheta(\cdot)$ is right-continuous, allowing a limit outcome argument. When $M>F$, stopping is of course the unique best reply. In the polar case $L_t=F_t=M_t$, there might not be a right-hand limit of $\indi{L_t>F_t}\frac{L_t-F_t}{L_t-M_t}$, so we set $\alpha_i^\vartheta(t)=1$. If the limit does exist, then it can be used to make $\alpha_i^\vartheta(\cdot)$ right-continuous even here, as the players will be indifferent in this case.
\end{remark}

If $L_\vartheta=F_\vartheta>M_\vartheta$, then each player becomes leader or follower with probability $\frac{1}{2}$.\footnote{%
Then the $\liminf$ and $\limsup$ in Definition \ref{def:outcome} are both $\frac{1}{2}$ with the strategies of Proposition \ref{prop:eqlL>F}.
}
This is the same outcome as in \cite{FudenbergTirole85} for their smooth, deterministic model. If $L_\vartheta>F_\vartheta>M_\vartheta$, however, then there is a positive probability of simultaneous stopping, which is the price of preemption, driving the payoffs down to $F_\vartheta$.

\subsection{General symmetric equilibria}\label{subsec:eqlsym}

We can now combine the equilibria obtained for $F\geq L$ or $L>F$. With standard mixed strategies, the equilibria for a (temporary) second-mover advantage from Theorem \ref{thm:mixedeql} depend on the ``terminal condition'' $\Delta G_i^\vartheta(M-F)\geq 0$, e.g., when a preemption regime may start with both players trying to stop immediately. Proposition \ref{prop:eqlL>F}, however, gives us ``continuation'' equilibria of immediate stopping at such transitions with payoffs $\max(F,M)$. Indeed, if player $j$ uses an \emph{extended} mixed strategy, then the payoff difference for player $i$ between stopping and waiting when $G_j^\vartheta$ jumps to one at $\hat\tau_j^\vartheta=\inf\{t\geq\vartheta\mid\alpha_j^\vartheta(t)>0\}$ changes from $\Delta G_j^\vartheta(M-F)$ to 
\begin{equation*}
\Delta G_j^\vartheta\bigl(\alpha_j^\vartheta M+\bigl(1-\alpha_j^\vartheta\bigr)L-F\bigr),
\end{equation*}
which is nonnegative if $\alpha_j^\vartheta$ is as in Proposition \ref{prop:eqlL>F}. Therefore, this possibility to coordinate partially in preemption also generates suitable endpoints for attrition regimes when we cannot have $M\geq F$ before reaching $\{L>F\}$.

We thus obtain a payoff-symmetric subgame-perfect equilibrium for any symmetric timing game~-- where the payoff processes $L$, $F$ and $M$ do not depend on the individual players.

\begin{theorem}\label{thm:symeql}
Under Assumption \ref{asm:payoffs}, there exists a payoff-symmetric subgame-perfect equilibrium in extended mixed strategies $((G_1^\vartheta,\alpha_1^\vartheta);\vartheta\in\T)$, $((G_2^\vartheta,\alpha_2^\vartheta);\vartheta\in\T)$ given as follows:

\medskip
Pick $i,j\in\{1,2\}$, $i\neq j$. For any $\vartheta\in\T$, set $\tau^\vartheta:=\inf\{t\geq\vartheta\mid L_t>F_t\text{ or }M_t>F_t\}$. Define $G_i^\vartheta$, $G_j^\vartheta$ as in Theorem \ref{thm:mixedeql} and $\alpha_i^\vartheta=\alpha_j^\vartheta$ as in Proposition \ref{prop:eqlL>F}.

\medskip
Moreover, if $L$, $F$, and $M$ are such that for every stopping time $\tau\in\T$ it holds that $L_\tau=F_\tau\Rightarrow F_\tau=M_\tau\text{ or }\tau=\inf\{t>\tau\mid L_t>F_t\}$ a.s., then there is a symmetric subgame-perfect equilibrium where each player's strategy is given by $(G_i^\vartheta,\alpha_i^\vartheta)$ for every $\vartheta\in\T$.
\end{theorem}

\noindent
{\it Proof:} In Appendix \ref{app:miscproofs}.
\medskip

The idea of these equilibria is basically pasting the war of attrition on $\{F\geq L\}$ using the continuous strategies from Theorem \ref{thm:mixedeql} with the preemption equilibria of immediate stopping on $\{L>F\}$ by extended mixed strategies from Proposition \ref{prop:eqlL>F}. However, we had to prepare well for doing so, because the precise attrition behavior depends strongly on the continuation payoffs, resp.\ when preemption starts.

By the upper-semi-continuity of Assumption \ref{asm:payoffs}\,\ref{LFusc}, it feasible during attrition that the players coordinate on a future continuation equilibrium with payoffs $\max(F,M)$. Then there will be no predictable drop in payoffs from starting preemption. The corresponding symmetric equilibrium payoffs are given by
\begin{equation*}
\esssup_{\tau\in\T\colon\tau\geq\vartheta}E\Bigl[\indi{\tau<\tau^\vartheta}L_\tau+\indi{\tau\geq\tau^\vartheta} \max(F_{\tau^\vartheta},M_{\tau^\vartheta})\Bigv\F_\vartheta\Bigr]\end{equation*}
with $\tau^\vartheta=\inf\{t\geq\vartheta\mid L_t>F_t\text{ or }M_t>F_t\}$ for any $\vartheta\in\T$.

Whereas the endpoint condition $\Delta G_i^\vartheta(M-F)\geq 0$ at $\tau^\vartheta$ is now replaced by the preemption continuation equilibria, we still need to ensure the second one, $\Delta G_i^\vartheta(M-F)\leq 0$ at $\tau_i^\vartheta$; imposing the cap $\tau^\vartheta\wedge\inf\{t\geq\vartheta\mid M_t>F_t\}$ works in general, but there may also be alternative choices in more specific cases. The proof of Theorem \ref{thm:symeql} relies of course on those of Theorem \ref{thm:mixedeql} and Proposition \ref{prop:eqlL>F}. The main issue is that the former was formulated in a reduced setting with ``standard'' mixed strategies, so we establish a formal relation to the present setting with extended mixed strategies.

\section{Optimal symmetric equilibrium}\label{sec:symeql}

The equilibria of Theorem \ref{thm:symeql} involve the most aggressive preemption that is conceivable~-- it happens whenever $L>F$ and just for this reason. Their structure is thus relatively simple: the game ends as soon as there is a strict first-mover advantage. Preemption need not be that severe if there are future continuation equilibria with sufficiently high (expected) payoffs. In this section, we identify equilibria with least possible preemption and thus entailing the highest attainable equilibrium payoffs. We focus on the class of \emph{payoff-symmetric equilibria}, which are the subgame-perfect equilibria with $V^\vartheta_1=V^\vartheta_2$ a.s.\ at any stopping time $\vartheta\in\T$. These have clear implications for equilibrium strategies. In competitive games, where $M$ is throughout the lowest payoff, equilibrium payoffs are then at most the value of optimally stopping $\min(L,F)$~-- no matter how players mix, possibly using public correlation (Proposition \ref{prop:U_min(L,F)}). This bound on equilibrium payoffs enables us to identify inevitable preemption points: when the leader payoff $L$ exceeds any continuation equilibrium payoff. Theorem \ref{thm:maxeql} formulates a corresponding algorithm and establishes the existence of an ``optimal'' subgame-perfect equilibrium.

It is quite clear that any stopping on $\{F>L\}$ must induce the lower payoff $L$ if $M$ is not better. The basis of our argument is the more subtle result that players also cannot exploit $L>F$ by mixing in any payoff-symmetric equilibrium, even if they have no time constraint.

\begin{proposition}\label{prop:U_min(L,F)}
Suppose $M\leq\min(L,F)$. Then, in any payoff-symmetric subgame-perfect equilibrium and for any $\vartheta\in\T$ and $i,j\in\{1,2\}$ with $i\neq j$,
\begin{equation*}
V_i^\vartheta\bigl(G_i^\vartheta,\alpha_i^\vartheta,G_j^\vartheta,\alpha_j^\vartheta\bigr)\leq\esssup_{\tau\in\T\colon\tau\geq\vartheta}E\bigl[L_\tau\wedge F_\tau\bigv\F_\vartheta\bigr]=:U_{L\wedge F}(\vartheta),
\end{equation*}
where it is in fact enough to consider to stopping times $\tau\leq\inf\{t\geq\vartheta\mid G_i^\vartheta(t)\vee G_j^\vartheta(t)\geq 1\}$.
\end{proposition}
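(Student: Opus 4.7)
The plan is to exploit payoff symmetry in every subgame together with best-response indifference to show that the equilibrium value is the expectation of a payoff bounded by $L\wedge F$ at some stopping time $\leq\tau^*$. Fix $\vartheta\in\T$ and a payoff-symmetric equilibrium $(G_i,\alpha_i)$, $(G_j,\alpha_j)$, and set $\tau^*:=\inf\{t\geq\vartheta\mid G^\vartheta_i(t)\vee G^\vartheta_j(t)\geq 1\}$.

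First I would establish a structural constraint on the equilibrium strategies. At every stopping time $\vartheta'\in[\vartheta,\tau^*]$, payoff symmetry $V^{\vartheta'}_1=V^{\vartheta'}_2$ (inherited from subgame-perfection and time consistency) combined with a direct comparison of the immediate leader-vs.-follower contributions in Definition \ref{def:payoffs_extended}, after cancelling the symmetric continuation payoff, yields the pointwise identity $(a-b)(L_{\vartheta'}-F_{\vartheta'})=0$ a.s., where $a,b$ are the conditional immediate-jump probabilities of the two players at $\vartheta'$. Hence atoms of $G^\vartheta_i, G^\vartheta_j$ of differing size occur only where $L=F$, and the analogous argument extended to flows on subintervals shows that the entire signed measure $dG^\vartheta_i-dG^\vartheta_j$ is concentrated on $\{L=F\}$.

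Next I would combine this constraint with best-response indifference to pin down the per-event payoff. At a joint atom of equal mass $a=b$ in $\{L\leq F\}$, indifference for either player gives $V^{\vartheta'}_i=aM_{\vartheta'}+(1-a)L_{\vartheta'}\leq L_{\vartheta'}\wedge F_{\vartheta'}$ using $M\leq\min(L,F)$. Where atom masses differ we have $L=F$ by the structural constraint, so the per-event payoff trivially equals $L\wedge F$. In the preemption region $\{L>F\}$, equilibria require $\alpha$-extensions as in Proposition \ref{prop:eqlL>F}, yielding per-player payoff $\max(F,M)=F=L\wedge F$ by $M\leq F$ (note the same value $F$ also arises from a standard joint atom of the critical mass $a=(L-F)/(L-M)$, consistent with this bound). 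An analogous indifference-based computation handles the continuous portions of support.

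Integrating these per-event bounds over the effective stopping time $\sigma\leq\tau^*$ at which the game actually ends under equilibrium play, the equilibrium value admits a representation $V^\vartheta_i=E[Y_\sigma\mid\F_\vartheta]$ for some random variable $Y_\sigma\leq L_\sigma\wedge F_\sigma$. Picking $\tau=\sigma$ in the optimal-stopping problem for $L\wedge F$ then yields $V^\vartheta_i\leq\esssup_{\vartheta\leq\tau\leq\tau^*}E[L_\tau\wedge F_\tau\mid\F_\vartheta]\leq U_{L\wedge F}(\vartheta)$, which is exactly the desired bound together with the strengthened restriction $\tau\leq\tau^*$. The main obstacle I expect is making the continuous-flow part of the structural argument rigorous: the clean pointwise atom computation must be promoted to a measure-theoretic statement about both the absolutely continuous and singular parts of $dG^\vartheta_i-dG^\vartheta_j$, which typically requires either approximation by refined atom grids or differentiation of payoff-symmetry along suitable stopping-time families; care is also needed to handle the $\alpha$-extensions and the terminal contribution at $t=\infty$, where $M_\infty=F_\infty$ by convention ensures the bound remains valid.
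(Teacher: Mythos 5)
Your proposal captures the right structural ideas, and they coincide with the paper's Lemmata \ref{lem:dGi=dGj}, \ref{lem:Gi=Gj} and \ref{lem:DG>0}: payoff symmetry forces equal conditional stopping rates on \(\{L\not=F\}\), joint atoms are ruled out where \(F>\max(L,M)\), and with \(M\leq\min(L,F)\) every local outcome is worth at most \(L\wedge F\). However, two things need fixing. First, a precision point: what payoff symmetry delivers is equality of \emph{conditional} rates \(\mathbf{1}_{L\not=F}\,dG_1^\vartheta/(1-G^\vartheta_{1-})=\mathbf{1}_{L\not=F}\,dG_2^\vartheta/(1-G^\vartheta_{2-})\), not that the signed measure \(dG^\vartheta_i-dG^\vartheta_j\) is concentrated on \(\{L=F\}\). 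The unconditional equality \(G^\vartheta_1\equiv G^\vartheta_2\) only holds on \([\vartheta,\,\inf\{t\mid\mathbf{1}_{L=F}(dG^\vartheta_1+dG^\vartheta_2)>0\})\); afterwards the two distributions may genuinely diverge.

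Second, and this is the substantive gap that you yourself flag as "the main obstacle'': the step from per-event bounds to the global inequality is not a one-line integration, and the representation \(V^\vartheta_i=E[Y_\sigma\mid\F_\vartheta]\) for a single stopping time \(\sigma\in\T\) with \(Y_\sigma\leq (L\wedge F)_\sigma\) is in general not available, because the random "endpoint of play'' under a mixed strategy involves auxiliary randomization that is not \(\filt{F}\)\nobreakdash-adapted. The paper circumvents this by a delay-and-decompose argument: by optimality, player \(i\) is indifferent to postponing until \(\hat\tau(x):=\inf\{t\geq\vartheta\mid\mathbf{1}_{F\geq L}(dG^\vartheta_i+dG^\vartheta_j)>0\}\wedge\inf\{t\geq\vartheta\mid G^\vartheta_i\vee G^\vartheta_j>x\}\), which yields
\begin{equation*}
V^\vartheta_i=E\biggl[\int_{[0,\hat\tau(x))}(L\wedge F)\,dG^\vartheta_j+\bigl(1-G^\vartheta_j(\hat\tau(x)-)\bigr)V^{\hat\tau(x)}_i\biggv\F_\vartheta\biggr],
\end{equation*}
and then sends \(x\nearrow 1\), invoking Lemma \ref{lem:DG>0} to bound the limiting continuation value by \(L_{\hat\tau(1)}\wedge F_{\hat\tau(1)}\) when a jump terminates the game, Lemma \ref{lem:Gi=Gj} (forcing both distributions to exhaust simultaneously) when play ends continuously on \(\{F<L\}\), and a uniform-integrability result (Lemma \ref{lem:E(YZ)to0}) to justify the passage to the limit. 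Without this machinery, the boundary cases — in particular the situation where one distribution is exhausted on \(\{F<L\}\) before any mass hits \(\{F\geq L\}\), and the handling of the terminal \(\alpha\)\nobreakdash-regime — are not actually controlled, so the proof as written is incomplete even though it aims in the right direction.
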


The proof of Proposition \ref{prop:U_min(L,F)} in Appendix \ref{app:U_min(L,F)} is based on the following important facts for any payoff-symmetric equilibrium (which do not depend on the assumption $M\leq\min(L,F)$, yet): First, the conditional stopping probabilities of the players must be the same on $\{F\neq L\}$ (Lemma \ref{lem:dGi=dGj}), because a player who stops with a higher conditional probability also becomes leader with a higher conditional probability, whereas the other becomes follower on that event. As one consequence, $G_1^\vartheta$ and $G_2^\vartheta$ must then even be identical before they put any mass on $\{F=L\}$ (Lemma \ref{lem:Gi=Gj}). Moreover, on $\{F\neq L\}$, there can only be simultaneous jumps, and these are only possible when $M\geq F$ or when preemption occurs with $L\geq F>M$. Most importantly, there cannot be any jumps when $F>\max(L,M)$. Finally, the local payoff from any terminal jump is bounded by $\max(F,M)$ (Lemma \ref{lem:DG>0}).

The intuition for Proposition \ref{prop:U_min(L,F)} is now the following. In equilibrium, player $i$ must be willing to wait until any time at which still $G_i^\vartheta<1$ and to stop only from there on with the corresponding conditional probabilities. Consider as such a time the first one at which any player puts some mass on $\{F\geq L\}$; call it $\tilde\tau$. By waiting until $\tilde\tau$, player $i$ might become follower before if $G_j^\vartheta$ increases, and then on $\{F<L\}$. At $\tilde\tau$, at least one player is willing to stop by definition. The corresponding (symmetric) local payoff is clearly $L_{\tilde\tau}\leq F_{\tilde\tau}$ when $G_1^\vartheta$, $G_2^\vartheta$ are continuous. A jump can only occur if indeed $F_{\tilde\tau}=L_{\tilde\tau}$, which is also the maximal local payoff (with the hypothesis $M\leq \min(L,F)$, we cannot have any jump when $F_{\tilde\tau}>L_{\tilde\tau}$ as we have seen). Finally, it may happen that $G_i^\vartheta$ is exhausted on $\{F<L\}$, before ever reaching $\tilde\tau$. Then, however, we must have $G_1^\vartheta=G_2^\vartheta$. If they jump to one, the terminal payoff is at most $F<L$; if they approach one continuously, this means eventually becoming follower on $\{F<L\}$ for sure. In summary, player $i$ never receives more than $\min(L,F)$ when stopping occurs.

Proposition \ref{prop:U_min(L,F)} implies that whenever $L_\vartheta>U_{L\wedge F}(\vartheta)$, we must have  $G_1^\vartheta(\vartheta)\vee G_2^\vartheta(\vartheta)=1$ by preemption.\footnote{%
This argument is not impaired by any jump $\Delta G_j^\vartheta(\vartheta)\in(0,1)$ due to which player $i$ could not realize $L_\vartheta$. $L$ is right-continuous, so player $i$ could try to stop right after $\vartheta$. The formal argument is given in the proof of Theorem \ref{thm:maxeql}.
}
If there are any such preemption points in the future, they also restrict the feasible stopping times $\tau$ to maximize the expected value of $\min(L,F)$ in Proposition \ref{prop:U_min(L,F)}, which even further reduces the maximally attainable equilibrium payoff. By iteration, we can identify when preemption is inevitable.

\begin{theorem}\label{thm:maxeql}
Suppose $M\leq\min(L,F)$. Then there exists a payoff-symmetric subgame-perfect equilibrium with maximal payoffs within this class. For any $\vartheta\in\T$, these are
\begin{equation*}
V_1^\vartheta\bigl(G_1^\vartheta,\alpha_1^\vartheta,G_2^\vartheta,\alpha_2^\vartheta\bigr)=V_2^\vartheta\bigl(G_2^\vartheta,\alpha_2^\vartheta,G_1^\vartheta,\alpha_1^\vartheta\bigr)=\esssup_{\tau\in\T\colon\tau\in[\vartheta,\tilde\tau(\vartheta)]}E\bigl[L_\tau\wedge F_\tau\bigv\F_\vartheta\bigr]=:U_{(L\wedge F)^{\tilde\tau(\vartheta)}}(\vartheta),
\end{equation*}
where $\tilde\tau(\vartheta)$ is the latest sustainable preemption point after $\vartheta$ determined by the following algorithm:
\begin{enumerate}
\item
Set $\tau_0(\vartheta):=\inf\{t\geq\vartheta\mid L_t>U_{L\wedge F}(t)\}$ and
\begin{equation*}
(L\wedge F)^{\tau_0(\vartheta)}:=\bigl(L_{t\wedge\tau_0(\vartheta)}\wedge F_{t\wedge\tau_0(\vartheta)}\bigr)_{t\geq 0}
\end{equation*}
with Snell envelope $U_{(L\wedge F)^{\tau_0(\vartheta)}}:=(\esssup_{t\leq\tau\in\T}E[(L\wedge F)^{\tau_0(\vartheta)}_\tau\mid\F_t])_{t\geq 0}$.

\item
For every $n\in\N$, set $\tau_n(\vartheta):=\inf\{t\geq\vartheta\mid L_t>U_{(L\wedge F)^{\tau_{n-1}(\vartheta)}}(t)\}\wedge\tau_{n-1}(\vartheta)$ and
\begin{equation*}
(L\wedge F)^{\tau_n(\vartheta)}:=\bigl(L_{t\wedge\tau_n(\vartheta)}\wedge F_{t\wedge\tau_n(\vartheta)}\bigr)_{t\geq 0}
\end{equation*}
with Snell envelope $U_{(L\wedge F)^{\tau_n(\vartheta)}}=(\esssup_{t\leq\tau\in\T}E[(L\wedge F)^{\tau_n(\vartheta)}_\tau\mid\F_t])_{t\geq 0}$.

\item
Take the monotone limit $\tilde\tau(\vartheta):=\lim_{n\to\infty}\tau_n(\vartheta)$.
\end{enumerate}
\end{theorem}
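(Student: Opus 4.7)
The plan has three logical stages---well-definedness and convergence of the algorithm, the upper bound by iterating Proposition \ref{prop:U_min(L,F)}, and attainment by a concrete equilibrium construction---with time-consistency across subgames as the main technical hurdle.

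For the first stage, each stopped process \((L\wedge F)^{\tau_{n-1}(\vartheta)}\) inherits optionality, right-continuity and class (D) from \(L\) and \(F\), so the Snell envelope \(U_{(L\wedge F)^{\tau_{n-1}(\vartheta)}}\) is a well-defined right-continuous optional process (Section \ref{subsec:optstop}); the hitting time \(\tau_n(\vartheta)\) of the optional set \(\{L>U_{(L\wedge F)^{\tau_{n-1}(\vartheta)}}\}\) is therefore a stopping time. Monotonicity \(\tau_n\leq\tau_{n-1}\) holds by induction: \(\tau_n\leq\tau_{n-1}\) restricts the Snell problem to a smaller time interval, giving \(U_{(L\wedge F)^{\tau_n}}\leq U_{(L\wedge F)^{\tau_{n-1}}}\), hence \(\{L>U_{(L\wedge F)^{\tau_n}}\}\supseteq\{L>U_{(L\wedge F)^{\tau_{n-1}}}\}\) and a correspondingly earlier next hitting time. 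The decreasing limit \(\tilde\tau(\vartheta)=\lim_n\tau_n(\vartheta)\) is then a stopping time by right-continuity of \(\filt{F}\). Passing to the limit for the Snell envelopes uses Assumption \ref{asm:payoffs}(iii): picking optimal \(\sigma_n\in[\vartheta,\tau_n]\) for \(U_{(L\wedge F)^{\tau_n}}(\vartheta)\) (attained by upper-semi-continuity of \(L\wedge F\)) and extracting a sub-limit \(\sigma\in[\vartheta,\tilde\tau]\), the upper-semi-continuity from the left in expectation gives \(\limsup_n E[L_{\sigma_n}\wedge F_{\sigma_n}\mid\F_\vartheta]\leq E[L_\sigma\wedge F_\sigma\mid\F_\vartheta]\leq U_{(L\wedge F)^{\tilde\tau}}(\vartheta)\), so the Snell envelopes decrease pointwise to \(U_{(L\wedge F)^{\tilde\tau}}\); combined with the definition of \(\tau_{n+1}\) this yields the key fixed-point relation \(L\leq U_{(L\wedge F)^{\tilde\tau(\vartheta)}}\) on \([\vartheta,\tilde\tau(\vartheta))\).

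For the upper bound, iterate Proposition \ref{prop:U_min(L,F)}. In any payoff-symmetric equilibrium the game must end by \(\tau_0(\vartheta)\): otherwise, by right-continuity of \(L\), a player could wait past any earlier stopping mass to become leader at \(\tau_0\) with local payoff \(L_{\tau_0}>U_{L\wedge F}(\tau_0)\), violating the proposition applied at \(\tau_0\). The clause ``restrict to \(\tau\leq\inf\{t\geq\vartheta\mid G^\vartheta_i\vee G^\vartheta_j\geq 1\}\)'' then yields \(V^\vartheta\leq U_{(L\wedge F)^{\tau_0(\vartheta)}}(\vartheta)\). The same reasoning with \(\tau_0\) replaced by \(\tau_n\) gives \(V^\vartheta\leq U_{(L\wedge F)^{\tau_n(\vartheta)}}(\vartheta)\) for every \(n\), hence \(V^\vartheta\leq U_{(L\wedge F)^{\tilde\tau(\vartheta)}}(\vartheta)\) by the convergence just established.

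For attainment I adapt the construction of Theorem \ref{thm:symeql}, now with cutoff \(\tilde\tau(\vartheta)\) in place of \(\inf\{L>F\text{ or }M>F\}\). On \([\vartheta,\tilde\tau(\vartheta))\) use the attrition strategies driven by the compensator \(D_{(L\wedge F)^{\tilde\tau}}\); the support of \(dD_{(L\wedge F)^{\tilde\tau}}\) lies in \(\{U_{(L\wedge F)^{\tilde\tau}}=(L\wedge F)^{\tilde\tau}\}\cap[\vartheta,\tilde\tau)\subseteq\{F\geq L\}\) by the step-one inequality, so the formulas \(dG=dD_{(L\wedge F)^{\tilde\tau}}/(F-L)\) remain well defined, and on \(\{L>F\}\cap[\vartheta,\tilde\tau)\) waiting is strictly preferred so \(dG\equiv 0\). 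At \(\tilde\tau(\vartheta)\) paste the extended preemption equilibrium of Proposition \ref{prop:eqlL>F}: the step-one inequality and right-continuity force \(L_{\tilde\tau}\geq U_{(L\wedge F)^{\tilde\tau}}(\tilde\tau)\geq L_{\tilde\tau}\wedge F_{\tilde\tau}\), so \(L_{\tilde\tau}\geq F_{\tilde\tau}\), and that equilibrium yields payoff \(\max(F_{\tilde\tau},M_{\tilde\tau})=F_{\tilde\tau}=L_{\tilde\tau}\wedge F_{\tilde\tau}\), matching the target value. Verification that these strategies form a subgame equilibrium at \(\vartheta\) is then a repetition of the martingale arguments of Theorem \ref{thm:mixedeql}, applied to \((L\wedge F)^{\tilde\tau}\) in place of \(\tilde L^{\tau^\vartheta}\).

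The main obstacle is time consistency of the family \((\tilde\tau(\vartheta))_{\vartheta\in\T}\). One must show by induction on \(n\) that \(\tau_n(\vartheta)=\tau_n(\vartheta')\) on \(\{\vartheta\leq\vartheta'\leq\tau_n(\vartheta)\}\), i.e.\ that the iteration commutes with restarting at a later stopping time, and consequently \(\tilde\tau(\vartheta)=\tilde\tau(\vartheta')\) on the analogous event. Together with the required measurability of each \(\tau_n(\cdot)\) as a function of \(\vartheta\) and with recursively continuing the construction on \([\tilde\tau(\vartheta),\infty)\) by restarting the algorithm at successive preemption points, this delivers an aggregated time-consistent subgame-perfect equilibrium in the sense of Definition \ref{def:SPE_extended}.
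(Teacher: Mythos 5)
Your three-stage structure matches the paper's: upper bound by iterating Proposition \ref{prop:U_min(L,F)}, attainment via the attrition/preemption pasting of Theorem \ref{thm:symeql} with the new cutoff, and time-consistency as the main technical task. Stage 2 and the broad outline of Stage 3 are fine, and you are right to note that the step-one inequality forces the support of \(dD_{(L\wedge F)^{\tilde\tau}}\) into \(\{F\geq L\}\cap[\vartheta,\tilde\tau)\), which is what the paper isolates as the regularity property (its equation \eqref{D_tildeLreg}) allowing Theorem \ref{thm:mixedeql} to be applied even though \(L>F\) can occur before \(\tilde\tau\). However, there are two genuine gaps.

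First, in Stage 1 your argument for the fixed-point relation \(L\leq U_{(L\wedge F)^{\tilde\tau(\vartheta)}}\) on \([\vartheta,\tilde\tau(\vartheta))\) does not go through as written. You claim to ``extract a sub-limit \(\sigma\in[\vartheta,\tilde\tau]\)'' of the optimal stopping times \(\sigma_n\in[\vartheta,\tau_n]\), but there is no general compactness principle yielding a subsequential limit of stopping times, and the \(\sigma_n\) are not monotone, so Assumption \ref{asm:payoffs}(iii) (upper-semi-continuity from the \emph{left}) does not apply. You also assert that the Snell envelopes converge pointwise to \(U_{(L\wedge F)^{\tilde\tau}}\); monotone decrease in \(n\) gives a limit that dominates \(U_{(L\wedge F)^{\tilde\tau}}\), but equality needs an argument you do not give (and the paper does not claim it). The paper instead argues by contradiction: on an event where \(L_\sigma>E[(L\wedge F)_\tau\mid\F_\sigma]\) for all \(\tau\in[\sigma,\tilde\tau]\), the near-optimal stopping times for \(U_{(L\wedge F)^{\tau_n}}(\sigma)\) must lie in \([\tilde\tau,\tau_n]\), which squeezes them down to \(\tilde\tau\), and then \emph{right}-continuity together with class (D) gives \(L_\sigma\leq E[(L\wedge F)_{\tilde\tau}\mid\F_\sigma]\leq U_{(L\wedge F)^{\tilde\tau}}(\sigma)\). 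This avoids any appeal to compactness or to convergence of Snell envelopes.

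Second, and more substantially, your treatment of aggregation stops at the statement that one should ``show the required measurability of each \(\tau_n(\cdot)\) as a function of \(\vartheta\).'' That is not enough: the equilibrium requires a single, progressively measurable process \(\tilde\alpha^\vartheta_i\) that vanishes on \([\vartheta,\tilde\tau(\vartheta))\) for \emph{every} \(\vartheta\in\T\) simultaneously, and the family \(\{[\vartheta,\tilde\tau(\vartheta))\colon\vartheta\in\T\}\) is uncountable. The paper resolves this by building the optional set \(A=\bigcup_{q\in\Q_+}[q,\tilde\tau(q))\), proving via the time-consistency relation \(\tilde\tau(\vartheta')=\tilde\tau(\vartheta)\) on \(\{\vartheta'\in[\vartheta,\tilde\tau(\vartheta)]\}\) and the right-limit property \eqref{prmptdense} that \((\vartheta,\tilde\tau(\vartheta))\subset A\) up to a nullset for each \(\vartheta\), and then defining \(\tilde\alpha^\vartheta_i:=(\limsup_{u\searrow t}\indi{u\in A^c})\alpha^\vartheta_i\), which is progressively measurable and independent of \(\vartheta\). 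It then has to verify that this modified \(\tilde\alpha\) is still right-continuous where \(<1\) and that \(\tilde\tau(\vartheta)=\inf\{t\geq\tilde\tau(\vartheta)\mid\tilde\alpha^\vartheta_i(t)>0\}\). This construction is the actual content of the ``main obstacle'' you identify, and your proposal contains nothing that replaces it.
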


\noindent
{\it Proof:} In Appendix \ref{app:maxeql}.
\medskip

The payoff-maximal equilibrium is implemented using the strategies of Theorem \ref{thm:symeql}, but setting $\alpha_i^\vartheta=0$ before $\tilde\tau(\vartheta)$. Constructing $\tilde\tau(\vartheta)$ by the algorithm is technically not difficult. The main problem is rather to verify the claimed equilibrium properties: to make sure that there is no preemption incentive when $L>F$ on $[\vartheta,\tilde\tau(\vartheta))$, that $\tilde\tau(\vartheta)$ is indeed maximal, and that there is a continuation equilibrium of preemption at $\tilde\tau(\vartheta)$. Furthermore, measurability is a major technical issue, since we want to have a time-consistent version of the strategies where we set $\alpha_i^\vartheta=0$ on $[\vartheta,\tilde\tau(\vartheta))$ for all $\vartheta\in\T$ to achieve the maximal payoff in all subgames.

In order to suppress preemption when $L_\vartheta>F_\vartheta$, it is obviously not sufficient that there exist $\tau\geq\vartheta$ such that $E[L_\tau\wedge F_\tau\mid\F_\vartheta]\geq L_\vartheta$; this relation then rather has to hold on all of $[\vartheta,\tau]\cap\{L>F\}$. For instance, the algorithm of Theorem \ref{thm:maxeql} can be applied to the model of \cite{FudenbergTirole85} by visual inspection, shown in Figure \ref{fig:FTpreem}.

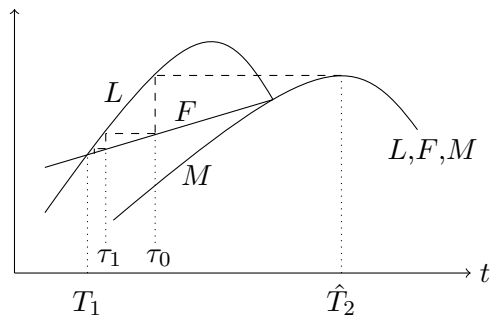
\begin{figure}[ht]
\centering
  \begin{tikzpicture}[inner sep=0pt,minimum size=0pt,label distance=3pt]
    \draw[->] (0,0) -- (6,0) node[label=right:$t$] {}; 
    \draw[->] (0,0) -- (0,3.5) node[] {}; 
    
    \draw[-] (.4,.8) .. controls (2.3,3.5) and (2.7,3.5) .. (3.4,2.3) [] {};
    \draw[-] (1.3,.7) .. controls (4,3) and (4.5,3) .. (5.3,1.9) [] {};
    \draw[-] (.4,1.4) -- (3.4,2.3) [] {};
    
    \node at (1.3,2.4) [] {$L$};
    \node at (2.25,2.15) [] {$F$};
    \node at (2.4,1.3) [] {$M$};
    \node at (5.5,1.6) [] {$L$,$F$,$M$};
    
    \draw[dotted] (.96,0) -- (.96,1.55) [] {};
    \draw[dotted] (1.2,0.4) -- (1.2,1.65) [] {};
    \draw[dotted] (1.85,0.4) -- (1.85,1.85) [] {};    
    \draw[dotted] (4.3,0) -- (4.3,2.6) [] {};
    \draw[dashed] (4.3,2.62) -- (1.85,2.62) [] {}; 
    \draw[dashed] (1.85,2.6) -- (1.85,1.85) [] {};   
    \draw[dashed] (1.85,1.85) -- (1.2,1.85) [] {}; 
    \draw[dashed] (1.2,1.85) -- (1.2,1.65) [] {}; 
    \draw[dashed] (1.2,1.65) -- (1.05,1.65) [] {};   
    \draw[dashed] (1.05,1.65) -- (1.05,1.58) [] {};                        
    \node at (.96,-.4) {$T_1$};       
    \node at (1.25,.2) {$\tau_1$}; 
    \node at (1.9,.2) {$\tau_0$};      
    \node at (4.3,-.35) {$\hat T_2$}; 
  \end{tikzpicture}
  \caption{Preemption, Fudenberg and Tirole (1985)}
  \label{fig:FTpreem}
\end{figure}

$L$ exceeds the future maximum of $\min(L,F)$ for the first time at $\tau_0$, whence there will be preemption. Taking that into account, at most the maximum of $\min(L,F)$ \emph{up to} $\tau_0$ might be achieved. However, $L$ will also exceed this reduced value, at  $\tau_1$. In the limit, $\tilde\tau(0)=T_1$ is the first inevitable preemption point. Fudenberg and Tirole also consider another, Case B, in which the peak at $\hat T_2$ is higher than first one. Then $\tilde\tau(0)=\infty$, because $L=F$ at and from their global peak onwards, and the players can coordinate on joint late adoption. In general, we may have much more complex stochastic patterns, of course, with arbitrary regions of first- and second-mover advantages, that may trigger preemption or not.

\section{Conclusion}\label{sec:conc}

In many timing games, mixed strategies play an important role as we have argued, either for equilibrium existence or for resolving any strategic conflicts (about roles with differing amenities) within the game. 
%
Having analysed the two different kinds of local strategic incentives, we have been able to prove existence of subgame-perfect equilibria for general symmetric stochastic timing games and to characterize them quite explicitly, providing symmetric equilibrium payoffs. Our approach is based on the general theory of optimal stopping and demonstrates which kinds of stopping problems need to be solved to verify equilibria; not only, but in particular for mixed strategies. 

There are possibly different equilibria for a given timing game, with varying degrees of preemption. We have considered the two extreme cases: If one initiates preemption whenever there is a first-mover advantage, then payoffs may be severely restricted. However, we have shown how to reduce preemption to a minimum and proved existence of corresponding equilibria with maximal attainable payoffs. If preemption can indeed be prevented in a certain regime with first-mover advantage (by sufficiently profitable future continuation equilibria), then there may also exist further equilibria with continuous mixing, which we have only employed for second-mover advantages. Nevertheless, any such additional mixing will be inefficient and induce lower payoffs (which one can also show directly).

A more specific strategic investment model with random first- and second-mover advantages is analysed in \cite{StegThijssen15}, where the strategies corresponding to the ones derived here have Markovian representations.

\appendix {

\section{Technical results}\label{app:tecres}

\begin{lemma}\label{lem:LdG}
If $L$ is a (measurable) process of class {\rm (D)}, then there exists a constant $K\in\R_+$ such that for any process $G$ which is a.s.\ right-continuous, nondecreasing, nonnegative and bounded by some $G_\infty\in L^\infty(P)$ and for any $[0,\infty]$-valued random variables $a,b$ it holds that
\begin{flalign*}
&\quad\text{\rm(i)} & &E\biggl[\int_{[a,b)}\abs{L_t}\,dG_t\biggr]\leq K\norm{G_\infty}_\infty<\infty\qquad\text{and} && 
\end{flalign*}
\vspace{-20pt}
\begin{flalign*}
&\quad\text{\rm(ii)} & &
\int_{[a,b)}\abs{L_t}\,dG_t=\int_0^\infty\abs{L_{\tau^G(x)}}\indi{\tau^G(x)\in[a,b)}\,dx<\infty\quad\text{a.s.}, && 
\end{flalign*}
where $\tau^G(x):=\inf\{t\geq 0\mid G_t\geq x\}$ for any $x\in\R_+$, and $\Delta G_0:=G_0$. The same holds if $\tau^G(x)$ is instead defined with ``\,$G_t>x$''.

If the bound on $G$ is only integrable, i.e., $G_\infty\in L^1(P)$, but $\{\abs{L_\tau}\indi{\tau<\infty}\mid\tau\in\T\}$ bounded in $L^\infty(P)$ by $K\in\R_+$, then {\rm(i)} holds with $KE[G_\infty]$ instead and {\rm(ii)} as stated.
\end{lemma}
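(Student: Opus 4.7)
My plan is to prove (ii) first, as a pathwise change-of-variables identity induced by the generalised inverse $\tau^G(\cdot)$, and then deduce (i) by taking expectations, applying Fubini, and using the class (D) hypothesis to bound the resulting integral uniformly in $x$.

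The first step is the pathwise equivalence $\{\tau^G(x) \leq t\} = \{G_t \geq x\}$, which holds for every $\omega$ on the full-measure set where $G(\omega)$ is right-continuous and non-decreasing. The direction $\Leftarrow$ is immediate from the definition of $\tau^G(x)$; the direction $\Rightarrow$ uses right-continuity to conclude $G_{\tau^G(x)}(\omega) \geq x$ and then monotonicity. This equivalence makes $x \mapsto \tau^G(x)(\omega)$ the left-continuous generalised inverse of $G(\omega)$ and shows that Lebesgue measure on $(0, G_\infty(\omega)]$ pushes forward under $\tau^G(\cdot)(\omega)$ to the Stieltjes measure $dG_\cdot(\omega)$ on $[0, \infty)$. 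Applying this change of variables pathwise to the non-negative Borel function $t \mapsto \abs{L_t(\omega)}\, \indi{[a(\omega), b(\omega))}(t)$ yields (ii); pathwise finiteness then falls out a posteriori from the bound in (i).

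For (i), the same equivalence shows that $\tau^G(x)$ is a stopping time for each $x \geq 0$ (using adaptedness and right-continuity of $G$ together with the usual conditions on $\filt{F}$) and that $(x,\omega) \mapsto \tau^G(x)(\omega)$ is jointly $\B(\R_+) \otimes \F$-measurable, so Fubini turns (ii) into
\[
E\biggl[\int_{[a,b)} \abs{L_t}\, dG_t\biggr] = \int_0^\infty E\bigl[\abs{L_{\tau^G(x)}}\, \indi{\tau^G(x) \in [a,b)}\bigr]\, dx.
\]
Class (D) of $L$, via Lemma \ref{lem:classD} (or equivalently by approximating with $\tau^G(x) \wedge n$ and using Fatou), yields a uniform $L^1$-bound $K$ on $\abs{L_\tau}\, \indi{\tau < \infty}$ over all stopping times $\tau$. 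Since $\tau^G(x) = \infty$ a.s.\ once $x > \norm{G_\infty}_\infty$, the $x$-integral is effectively supported on $(0, \norm{G_\infty}_\infty]$, giving the claimed bound $K \norm{G_\infty}_\infty$. The $L^\infty$-version is completely analogous, using $\abs{L_{\tau^G(x)}}\, \indi{\tau^G(x) < \infty} \leq K$ a.s.\ and the layer-cake identity $\int_0^\infty P(\tau^G(x) < \infty)\, dx = \int_0^\infty P(G_\infty \geq x)\, dx = E[G_\infty]$.

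The main technical care lies in the boundary bookkeeping: accommodating the convention $\Delta G_0 = G_0$ (mass at $t=0$ matched by $x \in (0, G_0]$), handling flat pieces of $G$ where several $x$-values share the same $\tau^G(x)$ (harmless because $dG = 0$ there), and ensuring $\tau^G(x) = \infty$ is correctly excluded from $[a, b)$ when $b$ may equal $\infty$. All of these issues are resolved cleanly once the pathwise identity $\{\tau^G(x) \leq t\} = \{G_t \geq x\}$ is in hand, after which the argument reduces to the standard layer-cake formula combined with the uniform $L^1$-bound supplied by class (D).
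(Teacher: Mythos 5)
Your proposal is correct and follows essentially the same route as the paper: establish the pathwise equivalence $\{\tau^G(x)\leq t\}=\{G_t\geq x\}$, deduce the pushforward of Lebesgue measure under $\tau^G(\cdot)$ to the Stieltjes measure $dG$ (the paper phrases this via a monotone-class argument), obtain (ii) as a pathwise change of variables, and then derive (i) by Fubini together with the class (D) bound $\sup_{\tau\in\T}E[\abs{L_\tau}\indi{\tau<\infty}]\leq K$; the $L^\infty/L^1$ variant is handled in both by the same layer-cake bound $\int_0^\infty\indi{\tau^G(x)<\infty}\,dx\leq G_\infty$. The only cosmetic difference is that you state the pathwise bijection between $\tau^G$ and $G$ a bit more explicitly and mention the boundary conventions, while the paper invokes \cite{Kallenberg02} for the monotone-class and change-of-variable steps.
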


\begin{proof}
The a.s.\ nondecreasing family of stopping times $(\tau^G(x))_{x\in\R_+}$ is the left-continuous inverse of $G$, which satisfies
\begin{equation*}
\tau^G(x)\leq t\ \Leftrightarrow\ G_t\geq x.
\end{equation*}
Thus, with the convention $\int_{[0,c]}\,dG=G_c$, we have a.s.\ $\int_{[0,\infty)}\indinb{A}\,dG=\int_0^\infty\indi{\tau^G(x)\in A}\,dx$ for all $A\in\{[0,c]\mid c\in\R_+\}$ and also for $A=\R_+$ by monotone convergence. The relation extends to all Borel sets $A$ from $\R_+$ by a monotone class argument.\footnote{%
See, e.g., \cite{Kallenberg02}, Theorem 1.1. 
}
As $L_\cdot(\omega)\colon\R_+\to\R$, $t\mapsto L_t(\omega)$, is Borel measurable\footnote{%
See, e.g., \cite{Kallenberg02}, Lemma 1.26 (i). 
} 
like the function $\indi{t\in[a(\omega),b(\omega))}$, we thus obtain the change-of-variable formula\footnote{%
See, e.g., \cite{Kallenberg02}, Lemma 1.22. It is necessary to restrict $dx$ to $\{\tau^G(x)<\infty\}$, which is redundant when integrating over $[a,b)$.
}
\begin{equation*}
\int_{[a,b)}\abs{L_t}\,dG_t=\int_{\{\tau^G(x)<\infty\}}\abs{L_{\tau^G(x)}}\indi{\tau^G(x)\in[a,b)}\,dx=\int_0^\infty\abs{L_{\tau^G(x)}}\indi{\tau^G(x)\in[a,b)}\,dx \qquad\text{a.s.} 
\end{equation*}
The formula also holds with $\inf\{t\geq 0\mid G_t>x\}=\tau^G(x+)$ instead of $\tau^G(x)$ because the two can only differ on a set of Lebesgue measure ($dx$) zero. By Fubini's Theorem,
\begin{align}
E\biggl[\int_0^\infty\abs{L_{\tau^G(x)}}\indi{\tau^G(x)\in[a,b)}\,dx\biggr]&\leq\int_0^\infty E\Bigl[\abs{L_{\tau^G(x)}}\indi{\tau^G(x)<\infty}\Bigr]\,dx \nonumber\\
&=\int_0^{\norm{G_\infty}_\infty}E\Bigl[\abs{L_{\tau^G(x)}}\indi{\tau^G(x)<\infty}\Bigr]\,dx. \label{ELdG}
\end{align}
As $L$ is of class {\rm (D)}, $\{\abs{L_\tau}\indi{\tau<\infty}\mid\tau\in\T\}$ is bounded in $L^1(P)$ by some $K<\infty$, whence \eqref{ELdG} is bounded by $K\norm{G_\infty}_\infty$ if the latter is finite. If $\norm{L_\tau\indi{\tau<\infty}}_\infty\leq K$ for every $\tau\in\T$ and $G$ is bounded by $G_\infty\in L^1(P)$, then \eqref{ELdG} is bounded by
\begin{equation*}
\int_0^\infty E\Bigl[K\indi{\tau^G(x)<\infty}\Bigr]\,dx=KE\biggl[\int_0^\infty\indi{\tau^G(x)<\infty}\,dx\biggr]\leq KE[G_\infty]<\infty.
\end{equation*}
In either case it follows that $\int_{[a,b)}\abs{L_t}\,dG_t<\infty$ a.s.
\end{proof}

\begin{lemma}\label{lem:SclassD}
Suppose the processes $L$, $F$, and $M$ are optional and of class {\rm (D)}, and the adapted process $G$ is right-continuous, nondecreasing and taking values in $[0,1]$ a.s. Then $S$ defined by
\begin{equation*}
S_t:=\int_{[0,t)}F_s\,dG_s+\Delta G_tM_t+\bigl(1-G_t\bigr)L_t,\quad t\in\R_+,
\end{equation*}
is optional and of class {\rm (D)} as well. Moreover, if additionally $M_\infty\in L^1(P)$ and $G_\infty\equiv 1$ are defined, and correspondingly $S_\infty$ by setting $t=\infty$, then also $S_\infty\in L^1(P)$.
\end{lemma}

\begin{proof}
The components of $S$ are obviously optional, in particular the integral being a left-continuous and $\Delta G$ the difference of a right-continuous and a left-continuous adapted process. To show that $S$ is of class {\rm (D)} under the hypothesis, and also that $S_\infty\in L^1(P)$, it is clearly enough to verify that $\{\int_{[0,\tau)}F\,dG\mid\tau\in\T\}$ is uniformly integrable. By the Theorem of de la Vall\'ee-Poussin, a family of random variables $\{X_\tau\mid\tau\in\T\}$ is uniformly integrable if and only if there exists a nondecreasing and convex function $g\colon\R_+\to\R_+$ such that $\lim_{t\to\infty}\frac{g(t)}{t}=\infty$ and $\sup_{\tau\in\T}E[g(\lvert X_\tau\rvert)]<\infty$. This holds by hypothesis for $\{F_\tau\indi{\tau<\infty}\mid\tau\in\T\}$; cf.\ fn.\ \ref{fn:classD}. Fix this $g$ associated to $F$. By a change of variable as in Lemma \ref{lem:LdG} then
\begin{align*}
&\sup_{\tau\in\T}E\biggl[g\biggl(\biggl\lvert\int_{[0,\tau)}F_s\,dG_s\biggr\rvert\biggr)\biggr]\leq\sup_{\tau\in\T}E\biggl[g\biggl(\int_0^1\abs{F_{\tau^G(x)}\indi{\tau^G(x)<\tau}}\,dx\biggr)\biggr]\\[4pt]
\leq{}&\sup_{\tau\in\T}E\biggl[\int_0^1g\Bigl(\abs{F_{\tau^G(x)}\indi{\tau^G(x)<\infty}}\Bigr)\,dx\biggr]\leq\int_0^1\sup_{\tau\in\T}E\Bigl[g\Bigl(\abs{F_{\tau}\indi{\tau<\infty}}\Bigr)\Bigr]\,dx<\infty.
\end{align*}
For the last two inequalities, we resp.\ used Jensen's inequality and Fubini's Theorem.
\end{proof}

\begin{lemma}\label{lem:tauC}
Consider two stopping times $\sigma\leq\tau$ and an event $C\in\F_\sigma$. Then
\begin{equation*}
\vartheta:=\sigma\indinb{C}+\tau\indinb{C^c}
\end{equation*}
is a stopping time. If the filtration is complete, it suffices that $\sigma\leq\tau$ a.s.
\end{lemma}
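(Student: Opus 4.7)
\textbf{Proof plan for Lemma \ref{lem:tauC}.} The goal is to verify that \(\{\vartheta\leq t\}\in\F_t\) for every \(t\in\R_+\). I first decompose
\begin{equation*}
\{\vartheta\leq t\}=\bigl(C\cap\{\sigma\leq t\}\bigr)\cup\bigl(C^c\cap\{\tau\leq t\}\bigr),
\end{equation*}
which is immediate from the definition of \(\vartheta\). The first piece lies in \(\F_t\) directly: since \(C\in\F_\sigma\), by the very definition of the \(\sigma\)-field of events prior to \(\sigma\) we have \(C\cap\{\sigma\leq t\}\in\F_t\).

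The second piece requires the hypothesis \(\sigma\leq\tau\). On this event \(\{\tau\leq t\}\subseteq\{\sigma\leq t\}\), so
\begin{equation*}
C\cap\{\tau\leq t\}=C\cap\{\sigma\leq t\}\cap\{\tau\leq t\}\in\F_t,
\end{equation*}
using again \(C\cap\{\sigma\leq t\}\in\F_t\) together with \(\{\tau\leq t\}\in\F_t\). Taking complements inside \(\{\tau\leq t\}\in\F_t\) yields \(C^c\cap\{\tau\leq t\}\in\F_t\), so the union is in \(\F_t\) and \(\vartheta\) is a stopping time.

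For the second assertion, suppose only that \(\sigma\leq\tau\) almost surely and that \(\filt{F}\) is complete. Let \(N:=\{\sigma>\tau\}\), which is a \(P\)-null set and hence belongs to \(\F_0\subseteq\F_t\). The identity \(C\cap\{\tau\leq t\}=C\cap\{\sigma\leq t\}\cap\{\tau\leq t\}\) now holds only on \(N^c\), but the symmetric difference of the two sets is contained in \(N\), hence negligible and in \(\F_t\); therefore \(C\cap\{\tau\leq t\}\) still lies in \(\F_t\), and the previous argument goes through unchanged. The only conceptual point to flag is the use of completeness precisely at this step: without it one cannot absorb the exceptional set on which \(\sigma\leq\tau\) might fail. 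No further subtleties arise, so the proof is essentially a measurability bookkeeping exercise.
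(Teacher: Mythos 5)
Your proof is correct and follows essentially the same route as the paper's: decompose \(\{\vartheta\leq t\}\), use that \(C\in\F_\sigma\) gives \(C\cap\{\sigma\leq t\}\in\F_t\), invoke \(\{\tau\leq t\}\subseteq\{\sigma\leq t\}\), and then obtain \(C^c\cap\{\tau\leq t\}\) by complementation inside \(\F_t\). You take the complement one step later than the paper (inside \(\{\tau\leq t\}\) rather than inside \(\{\sigma\leq t\}\), so the exceptional nullset you absorb via completeness is \(C\cap\{\sigma>t\}\cap\{\tau\leq t\}\) rather than \(C^c\cap\{\sigma>t\}\cap\{\tau\leq t\}\)), but this is purely a bookkeeping difference.
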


\begin{proof}
To verify that $\vartheta$ is a stopping time, we check whether $\{\vartheta\leq t\}\in\F_t$ for all $t\in\R_+$. First note that
\begin{equation*}
\{\vartheta\leq t\}=(\{\sigma\leq t\}\cap C)\cup(\{\tau\leq t\}\cap C^c).
\end{equation*}
The first intersection belongs to $\F_t$ by definition of $\F_\sigma$, and so also the complement $(\{\sigma\leq t\}\cap C)^c=\{\sigma>t\}\cup C^c\in\F_t$, implying $C^c\cap\{\sigma\leq t\}\in\F_t$. Finally, as $\{\tau\leq t\}\in\F_t$, we conclude
\begin{equation*}
C^c\cap\{\sigma\leq t\}\cap\{\tau\leq t\}=C^c\cap\{\tau\leq t\}\in\F_t.
\end{equation*}
If $\sigma\leq\tau$ only a.s., then the last equality holds up to the nullset $C^c\cap\{\sigma>t\}\cap\{\tau\leq t\}$, which is contained in $\F_t$ if the filtration is complete.
\end{proof}

\begin{lemma}\label{lem:DLcont}
The process $\tilde L^{\tau^\vartheta}:=\indi{t<\tau^\vartheta}L+\indi{t\geq\tau^\vartheta}\max(F_{\tau^\vartheta},M_{\tau^\vartheta})$ defined in Theorem \ref{thm:mixedeql} is upper-semi-continuous from the left in expectation on $[\vartheta,\infty]$, on which $D_{\tilde L}$ is hence left-continuous a.s.
\end{lemma}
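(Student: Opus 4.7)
The plan is to reduce the claim to Assumption~\ref{asm:payoffs}\,\ref{LFusc} applied to $L\wedge F$, together with a simple monotone-convergence bookkeeping step at the boundary $\tau^\vartheta$. Three ingredients are relevant: (a) since $\tau^\vartheta$ takes values in $[\vartheta,\,\inf\{t\geq\vartheta\mid F_t<L_t\}]$, we have $L_t=(L\wedge F)_t$ for all $t\in[\vartheta,\tau^\vartheta)$ a.s.; (b) $\tilde L^{\tau^\vartheta}$ is \emph{constant} on $[\tau^\vartheta,\infty]$ with value $\max(F_{\tau^\vartheta},M_{\tau^\vartheta})$; and (c) the difference $\Delta:=\max(F_{\tau^\vartheta},M_{\tau^\vartheta})-(L\wedge F)_{\tau^\vartheta}$ is a.s.\ nonnegative.

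Given stopping times $(\tau_n)\subset\T$ with $\tau_n\nearrow\tau$ a.s.\ in $[\vartheta,\infty]$, I would introduce $\sigma_n:=\tau_n\wedge\tau^\vartheta$ and $\sigma:=\tau\wedge\tau^\vartheta$; these are stopping times with $\sigma_n\nearrow\sigma$, and by (b) one has $\tilde L^{\tau^\vartheta}_{\tau_n}=\tilde L^{\tau^\vartheta}_{\sigma_n}$ and $\tilde L^{\tau^\vartheta}_\tau=\tilde L^{\tau^\vartheta}_\sigma$, so I may work with the $\sigma_n$ throughout. A case split on $\{\sigma_n<\tau^\vartheta\}$ (where (a) applies) and $\{\sigma_n=\tau^\vartheta\}$ (which contributes $\Delta$) yields the key identity
\begin{equation*}
\tilde L^{\tau^\vartheta}_{\sigma_n}=(L\wedge F)_{\sigma_n}+\Delta\,\indi{\sigma_n=\tau^\vartheta},
\end{equation*}
and the analogous identity at $\sigma$. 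Assumption~\ref{asm:payoffs}\,\ref{LFusc} gives $\limsup_n E[(L\wedge F)_{\sigma_n}]\leq E[(L\wedge F)_\sigma]$ directly. Since $\sigma_n\leq\tau^\vartheta$ is monotone, the events $\{\sigma_n=\tau^\vartheta\}$ form a nondecreasing sequence whose union is contained in $\{\sigma=\tau^\vartheta\}$; combined with $\Delta\geq 0$ and the class~{(D)} integrability built into Assumption~\ref{asm:payoffs}, monotone convergence delivers $\lim_n E[\Delta\,\indi{\sigma_n=\tau^\vartheta}]\leq E[\Delta\,\indi{\sigma=\tau^\vartheta}]$. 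Adding these two estimates and reversing the identity at $\sigma$ gives $\limsup_n E[\tilde L^{\tau^\vartheta}_{\tau_n}]\leq E[\tilde L^{\tau^\vartheta}_\sigma]=E[\tilde L^{\tau^\vartheta}_\tau]$, which is the required upper-semi-continuity from the left in expectation on $[\vartheta,\infty]$.

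The concluding assertion about $D_{\tilde L}$ is then standard. First I would note the preliminary properties that $\tilde L^{\tau^\vartheta}$ is adapted, right-continuous (right-continuity of $L$ for $t<\tau^\vartheta$ plus constancy from $\tau^\vartheta$ onward), and of class~{(D)} (by comparison of $|\tilde L^{\tau^\vartheta}_\sigma|$ with $|L_{\sigma\wedge\tau^\vartheta}|+|F_{\tau^\vartheta}|+|M_{\tau^\vartheta}|$ via Lemma~\ref{lem:classD}), so its Snell envelope admits a Doob-Meyer decomposition with a predictable nondecreasing compensator $D_{\tilde L}$. Left-continuity of $D_{\tilde L}$ then follows from the left-upper-semi-continuity in expectation just established, exactly as recalled in footnote~\ref{fn:Lusc} via Théorème II.2 of \cite{BismutSkalli77}. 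The only genuine obstacle I anticipate is the case analysis at the boundary $\tau^\vartheta$; fortunately the nonnegativity of $\Delta$ is precisely what makes the residual boundary term cooperate with the monotone-convergence step.
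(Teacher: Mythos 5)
Your proof of the upper-semi-continuity claim is correct and takes essentially the same route as the paper's: you cap the stopping times at \(\tau^\vartheta\), identify \(\tilde L^{\tau^\vartheta}\) with \(L\wedge F\) before \(\tau^\vartheta\) (using \(F\geq L\) there), invoke Assumption~\ref{asm:payoffs}\,\ref{LFusc}, and control the boundary contribution at \(\tau^\vartheta\) via the sign of \(\Delta\). The paper organizes the boundary step slightly differently, through the event \(A:=\bigcap_n\{\tau_n<\tau^\vartheta\}\) and class~(D) convergence on \(A^c\), but that is equivalent bookkeeping.

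The concluding deduction of left-continuity of \(D_{\tilde L}\), however, has a genuine gap. The result you cite (Th\'eor\`eme~II.2 of \cite{BismutSkalli77}, recalled in footnote~\ref{fn:Lusc}) gives left-continuity of the compensator of the Snell envelope for a process that is upper-semi-continuous from the left in expectation on all of \([0,\infty]\). You have only established that property for \(\tilde L^{\tau^\vartheta}\) on \([\vartheta,\infty]\); on \([0,\vartheta)\) the process is just \(L\), and \(L\) alone need not satisfy Assumption~\ref{asm:payoffs}\,\ref{LFusc} (only \(L\wedge F\) does). Since the Snell envelope and its Doob--Meyer decomposition are global objects, you cannot simply apply the theorem ``on an interval.'' The paper closes this by introducing an auxiliary process \(\hat L:=\indi{t<\vartheta}\hat M+\indi{t\geq\vartheta}\tilde L^{\tau^\vartheta}\), with \(\hat M\) a right-continuous version of the martingale \((E[\tilde L^{\tau^\vartheta}_\vartheta\mid\F_t])_{t\geq 0}\): this \(\hat L\) is globally upper-semi-continuous from the left in expectation, agrees with \(\tilde L^{\tau^\vartheta}\) on \([\vartheta,\infty]\), and therefore (by uniqueness of optional projections and of the Doob--Meyer decomposition) has the same Snell envelope and compensator there. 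You need this reduction, or an equivalent localization argument, before invoking Bismut--Skalli.
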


\begin{proof}
Let $(\tau_n)_{n\in\N}$ be a sequence of stopping times that is a.s.\ increasing and taking values in $[\vartheta,\infty]$, and denote the limit by $\tau\in\T$. Define the measurable set
\begin{equation*}
A:=\bigcap_{n}\{\tau_n<\tau^\vartheta\}.
\end{equation*}
Then $\lim_{n\to\infty}\tilde L_{\tau_n}=\tilde L_\tau$ a.s.\ on $A^c$, implying $\lim_{n\to\infty} E[\indinb{A^c}\tilde L_{\tau_n}]=E[\indinb{A^c}\tilde L_\tau]$ as $\tilde L$ is of class {\rm (D)}. We obtain the analogue for $L\wedge F$ if we use $(\tau_n\wedge\tau^\vartheta)$ and $(\tau\wedge\tau^\vartheta)$, respectively. Combining the latter fact with upper-semi-continuity from the left in expectation given by Assumption \ref{asm:payoffs}\,\ref{LFusc} yields
\begin{align*}
& & \limsup_{n\in\N}{}&E\bigl[\indinb{A}(L\wedge F)_{\tau_n\wedge\tau^\vartheta}+\indinb{A^c}(L\wedge F)_{\tau_n\wedge\tau^\vartheta}\bigr]\leq E\bigl[\indinb{A}(L\wedge F)_{\tau\wedge\tau^\vartheta}+\indinb{A^c}(L\wedge F)_{\tau\wedge\tau^\vartheta}\bigr] & &\\
&\Rightarrow & \limsup_{n\in\N}{}&E\bigl[\indinb{A}(L\wedge F)_{\tau_n}\bigr]\leq E\bigl[\indinb{A}(L\wedge F)_{\tau}\bigr]. & &
\end{align*}
Note that $(L\wedge F)_{\tau_n}=\tilde L_{\tau_n}$ on $A$ and $(L\wedge F)_{\tau}\leq\tilde L_\tau$ on $A\subseteq\{\tau\leq\tau^\vartheta\}$, implying the first claim.

Finally, we show that if $\tilde L$ is upper-semi-continuous from the left in expectation on an interval $[\vartheta,\infty]$, then the paths of $D_{\tilde L}$ are left-continuous on that interval, a.s. Define the auxiliary process $\hat L$ by $\hat L_t:=\indi{t<\vartheta}\hat M_t+\indi{t\geq\vartheta}\tilde L_t$, where $\hat M$ is a right-continuous version of the martingale $(E[\tilde L_\vartheta\mid\F_t])_{t\geq 0}$, which is uniformly integrable and thus (left-) continuous in expectation thanks to optional sampling. $\hat L$ inherits upper-semi-continuity from the left in expectation because $E[\hat L_{\tau_n}]=E[\indi{\tau_n<\vartheta}\tilde L_\vartheta+\indi{\tau_n\geq\vartheta}\tilde L_{\tau_n}]=E[\tilde L_{\tau_n\vee\vartheta}]$ and similarly $E[\hat L_{\tau}]=E[\tilde L_{\tau\vee\vartheta}]$. As $\tilde L$ and $\hat L$ agree on $[\vartheta,\infty]$, their Snell envelopes $U_{\tilde L}$ and $U_{\hat L}$ agree at any stopping time in that interval and thus $U_{\tilde L}\indi{t\geq\vartheta}$ and $U_{\hat L}\indi{t\geq\vartheta}$ are indistinguishable by the uniqueness of optional projections. The same holds for the compensators $D_{\tilde L}$ and $D_{\hat L}$ on $[\vartheta,\infty]$ by uniqueness of the Doob-Meyer-decomposition. $D_{\hat L}$ is left-continuous a.s.\ because $\hat L$ is upper-semi-continuous from the left in expectation, see fn.\ \ref{fn:Lusc}.
\end{proof}

\begin{lemma}\label{lem:dG/1-G}
Let $A,B\colon\R_+\to\R\cup\{+\infty\}$ be two right-continuous, nondecreasing functions with $0\leq\Delta B\leq 1$. Then the differential equation
\begin{equation}\label{dG=(1-G)dA}
dG=(1-G)\,dA,\qquad G_{0-}=a\in\R
\end{equation}
has the solution
\begin{align*}
G=1-(1-a)e^{-\int\,dA^c}\textstyle\prod(1+\Delta A)^{-1}=1-(1-a)e^{-\int\,dA^c-\sum\ln(1+\Delta A)},
\end{align*}
where $A^c=A-\sum\Delta A\in[A_0,A]$ is the continuous part of $A$, and the differential equation
\begin{equation}\label{dG=(1-G-)dA}
dG=(1-G_-)\,dB,\qquad G_{0-}=b\in\R
\end{equation}
has the solution
\begin{align*}
G&=1-(1-b)e^{-\int\,dB^c}\textstyle\prod(1-\Delta B)=1-(1-b)e^{-\int\,dB^c+\sum\ln(1-\Delta B)}.
\end{align*}
Any solution to \eqref{dG=(1-G)dA} or \eqref{dG=(1-G-)dA} is monotone towards, but never crossing, one. $G$ solves both equations if and only if $\Delta A=\frac{\Delta B}{1-\Delta B}$, resp.\ $\Delta B=\frac{\Delta A}{1+\Delta A}$.  
\end{lemma}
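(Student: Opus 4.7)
The plan is to reduce both equations to linear Stieltjes equations by substituting $H := 1-G$, so that \eqref{dG=(1-G)dA} becomes $dH = -H\,dA$ with $H_{0-}=1-a$, and \eqref{dG=(1-G-)dA} becomes $dH = -H_-\,dB$ with $H_{0-}=1-b$. I would then split the drivers into their continuous and pure-jump parts, $A=A^c+\sum\Delta A$ and $B=B^c+\sum\Delta B$, solve each component separately, and finally recombine them. For the continuous parts the standard exponential ODE argument gives $H^c/H^c_{0-}=e^{-(A^c-A^c_{0-})}$, respectively $e^{-(B^c-B^c_{0-})}$. For the jumps in equation \eqref{dG=(1-G)dA}, using $\Delta G=(1-G)\Delta A$ (note the right-continuous $1-G$ in the integrand), I rearrange $(1-G)(1+\Delta A)=(1-G_-)$ to obtain $\Delta H/H_- = -\Delta A/(1+\Delta A)$, hence $H=H_-/(1+\Delta A)$; iterating over jump times gives the product $\prod(1+\Delta A)^{-1}$. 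For equation \eqref{dG=(1-G-)dA}, the jump relation $\Delta G = (1-G_-)\Delta B$ gives directly $H=H_-(1-\Delta B)$, producing the product $\prod(1-\Delta B)$. Multiplying the continuous and jump contributions yields the two closed-form expressions; verification that the product actually satisfies the Stieltjes equation then amounts to checking the pathwise integration-by-parts / pure-jump additivity, and uniqueness follows from a standard Gronwall-type estimate on $H_- - \tilde H_-$ against $dA$ (resp.\ $dB$).

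For monotonicity and the $G\leq 1$ bound, I would read it off the two formulas: since $A^c$ is nondecreasing, $e^{-\int dA^c}\in(0,1]$, and since $\Delta A\geq 0$ each factor $(1+\Delta A)^{-1}\in(0,1]$, the product representing $1-G$ is in $[0,\,1-a]$; analogously, each $(1-\Delta B)\in[0,1]$ by the assumption $\Delta B\leq 1$, so $1-G\in[0,\,1-b]$. In particular $G$ is nondecreasing and bounded by $1$, and $G<1$ unless either $a=1$ (resp.\ $b=1$) or $\Delta B=1$ at some jump, which is exactly the ``never crossing 1'' assertion. Both conclusions can also be read directly from the differential equations: $dG\geq 0$ because $dA,dB\geq 0$ and the coefficients $1-G,1-G_-$ stay nonnegative (an inductive argument across jumps), and $G=1$ is an absorbing fixed point.

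For the last claim, suppose $G$ solves both equations simultaneously. At a continuity point of both drivers, the continuous parts of the two equations give $(1-G)\,dA^c = (1-G)\,dB^c$, so $dA^c = dB^c$ on $\{G<1\}$. At a jump time $t$ where $G_-<1$, I equate the two expressions for $\Delta G$: from \eqref{dG=(1-G)dA}, $\Delta G=(1-G)\Delta A$, which rearranges to $\Delta G=(1-G_-)\Delta A/(1+\Delta A)$; from \eqref{dG=(1-G-)dA}, $\Delta G=(1-G_-)\Delta B$. Equality forces $\Delta B=\Delta A/(1+\Delta A)$, equivalently $\Delta A=\Delta B/(1-\Delta B)$. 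Conversely, if these two jump identities hold and $dA^c=dB^c$, a direct comparison of the closed-form solutions (they have the same continuous factor, and the jump factors $(1+\Delta A)^{-1}$ and $(1-\Delta B)$ coincide under the relation) shows that the same $G$ solves both equations.

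The only genuine difficulty I anticipate is justifying the product formula rigorously when jumps accumulate, i.e.\ showing that $\prod(1+\Delta A)^{-1}$ (and analogously $\prod(1-\Delta B)$) converges and that the pieced-together candidate really satisfies the Stieltjes equation across accumulation points. This is standard but needs the summability $\sum\Delta A<\infty$ and $\sum|\ln(1-\Delta B)|<\infty$ on finite intervals, which follow from $A,B$ being right-continuous with finite values together with $\Delta B\leq 1$ (with the natural convention $\ln 0 = -\infty$, i.e.\ the product vanishes and $G$ jumps to $1$ as soon as some $\Delta B=1$). Everything else is routine manipulation of the closed-form expressions.
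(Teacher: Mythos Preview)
Your proposal is correct and considerably more detailed than the paper's own proof, which reads in full ``Straightforward to check'' followed by a two-line argument for the monotonicity claim. Your derivation via \(H=1-G\), separating continuous and jump parts and then recombining, is the natural constructive route; the paper simply leaves that verification to the reader.

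One point where the paper's brief argument is actually sharper than yours: the lemma allows \(a,b\in\R\), not only \(a,b\leq 1\), and the statement is ``monotone \emph{towards}, but never crossing, 1''---so if \(a>1\) the solution decreases towards 1 from above. Your reading from the formula (``\(1-G\in[0,1-a]\)'', ``\(G\) is nondecreasing and bounded by 1'') tacitly assumes \(a\leq 1\). The paper instead argues directly from the jump relation: for any solution, \(\Delta G_0=\frac{(1-a)\,\Delta A_0}{1+\Delta A_0}\), so the sign of \(1-a\) fixes the direction of the step, and \(G_0\gtrless 1\Leftrightarrow 0\gtrless 1-a\) shows 1 is not crossed at the first jump; this then iterates with \(G_{t-}\) in place of \(a\). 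That argument covers both sides of 1 uniformly and applies to \emph{any} solution, which is what the lemma asserts, without invoking uniqueness. Your Gronwall remark would also close this, but the paper never claims or uses uniqueness---it proves the ``any solution'' monotonicity directly from the equation.
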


\begin{proof}
Straightforward to check. Note that $dG_0=\frac{(1-a)\,dA_0}{1+\Delta A_0}$ for any solution of \eqref{dG=(1-G)dA}, implying both monotonicity on either side of one and that this value cannot be crossed. Indeed, $G_0\gtrless 1\Leftrightarrow\Delta G_0\gtrless 1-G_{0-}=1-a\Leftrightarrow 0\gtrless 1-a$.

Similarly, $dG_0=(1-b)\,dB_0$ for any solution of \eqref{dG=(1-G-)dA}, implying $G_0\gtrless 1\Leftrightarrow\Delta G_0\gtrless 1-G_{0-}=1-b\Leftrightarrow 0\gtrless 1-b$ \emph{and} $\Delta B_0<1$, using $\Delta B_0\in[0,1]$ for the last equivalence.
\end{proof}

\begin{lemma}\label{lem:E(YZ)to0}
Let $(Y_t,Z_t)_{t\in[0,1]}$ be a family of random variables. Assume that the family $(Y_t)$ is uniformly integrable and that $(Z_t)$ is bounded in $L^\infty(P)$, and $Z_t\to 0$ in probability as $t\to 1$. Then 
\begin{equation*}
\lim_{t\to 1}E[Y_tZ_t]=0.
\end{equation*}
\end{lemma}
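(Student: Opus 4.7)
The plan is a standard uniform-integrability argument: split the expectation according to whether $|Z_t|$ is small or large, using the $L^\infty$ bound on $Z_t$ on the small set and uniform integrability of $(Y_t)$ on the large set (whose probability tends to zero).

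More concretely, set $K := \sup_{t \in [0,1]} \norm{Z_t}_\infty < \infty$ and $C := \sup_{t \in [0,1]} E[\abs{Y_t}] < \infty$ (finite by uniform integrability). Fix $\varepsilon > 0$. I would write
\begin{equation*}
\abs{E[Y_t Z_t]} \leq E[\abs{Y_t}\abs{Z_t}\indi{\abs{Z_t}\leq\varepsilon}] + E[\abs{Y_t}\abs{Z_t}\indi{\abs{Z_t}>\varepsilon}] \leq \varepsilon C + K\, E[\abs{Y_t}\indi{\abs{Z_t}>\varepsilon}].
\end{equation*}
The first summand is arbitrarily small by choosing $\varepsilon$, uniformly in $t$.

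For the second summand, I would invoke the standard equivalent characterization of uniform integrability: for every $\delta > 0$ there exists $\eta > 0$ such that $P(A) < \eta$ implies $\sup_t E[\abs{Y_t}\indi{A}] < \delta$. Since $Z_t \to 0$ in probability, $P(\abs{Z_t} > \varepsilon) \to 0$ as $t \to 1$, so for $t$ sufficiently close to $1$ we have $P(\abs{Z_t}>\varepsilon) < \eta$, giving $K\, E[\abs{Y_t}\indi{\abs{Z_t}>\varepsilon}] < K\delta$. Combining the two bounds yields $\limsup_{t \to 1} \abs{E[Y_tZ_t]} \leq \varepsilon C + K\delta$, and letting $\varepsilon, \delta \to 0$ gives the claim.

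There is no real obstacle here; the only point requiring a moment's care is that the $\eta$ in the uniform-integrability characterization must be chosen uniformly in $t$, which is exactly what uniform integrability of the family $(Y_t)$ provides. Everything else is elementary.
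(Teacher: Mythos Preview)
Your proof is correct. Both arguments are standard uniform-integrability truncations, but they split on different variables: the paper truncates \(Y_t\) at a level \(K_\varepsilon\), bounding \(E[\indi{\abs{Y_t}\geq K_\varepsilon}\abs{Y_tZ_t}]\) by \(\varepsilon K\) via uniform integrability and then using \(Z_t\to 0\) in \(L^1(P)\) (which follows from boundedness in \(L^\infty\) plus convergence in probability) to control \(E[\indi{\abs{Y_t}<K_\varepsilon}\abs{Y_tZ_t}]\leq K_\varepsilon E[\abs{Z_t}]\). You instead truncate \(Z_t\) at level \(\varepsilon\) and invoke the uniform-absolute-continuity characterization of uniform integrability to handle the set \(\{\abs{Z_t}>\varepsilon\}\) of vanishing probability. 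The two routes are dual and equally elementary; yours avoids the intermediate step of upgrading convergence in probability to \(L^1\)-convergence of \(Z_t\), at the cost of citing the (equivalent) \(\eta\)--\(\delta\) form of uniform integrability rather than the truncation form.
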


\begin{proof}
$(\norm{Z_t}_\infty)$ is bounded by a constant $K$, hence $(Z_t)$ is uniformly integrable and converges to the constant zero also in $L^1(P)$ as $t\to 1$. As $(Y_t)$ is uniformly integrable, we can find for any $\varepsilon>0$ a suitable constant $K_\varepsilon\geq 0$ such that 
\begin{equation*}
E\bigl[\indi{\abs{Y_t}\geq K_\varepsilon}\abs{Y_tZ_t}\bigr]\leq\varepsilon K\qquad\text{for all }t\in[0,1].
\end{equation*}
In combination,
\begin{align*}
\limsup_{t\to 1}E\bigl[\abs{Y_tZ_t}\bigr]={}&\limsup_{t\to 1}\Bigl(E\bigl[\indi{\abs{Y_t}\geq K_\varepsilon}\abs{Y_tZ_t}\bigr]+E\bigl[\indi{\abs{Y_t}<K_\varepsilon}\abs{Y_tZ_t}\bigr]\Bigr)\\
\leq{}&\limsup_{t\to 1}E\bigl[\indi{\abs{Y_t}\geq K_\varepsilon}\abs{Y_tZ_t}\bigr]+\lim_{t\to 1}E\bigl[\indi{\abs{Y_t}<K_\varepsilon}\abs{Y_tZ_t}\bigr]\leq\varepsilon K
\end{align*}
and the claim follows.
\end{proof}

\section{Proofs}\label{app:proofs}

\subsection{Proofs for results in Sections \ref{sec:BR}--\ref{sec:eqlsym}}\label{app:miscproofs}

\begin{proof}[{\bf Proof of Lemma \ref{lem:BRpure}}]
Define the right-continuous inverse of $G_i^\vartheta$ by the stopping times $\tau_i^{G,\vartheta}(x):=\inf\{t\geq\vartheta\mid G_i^\vartheta(t)>x\}$, $x\in[0,1)$. Then Lemma \ref{lem:LdG} allows the change of variable
\begin{equation*}
\int_{[\vartheta,\infty)}S_i^\vartheta(t)\,dG_i^\vartheta(t)=\int_0^1 S_i^\vartheta\bigl(\tau_i^{G,\vartheta}(x)\bigr)\indi{\tau_i^{G,\vartheta}(x)<\infty}\,dx\quad\text{a.s.}
\end{equation*}
Moreover, $x>G_i^\vartheta(\infty-)\Rightarrow\tau_i^{G,\vartheta}(x)=\infty\Rightarrow x\geq G_i^\vartheta(\infty-)$, i.e., $\indi{x>G_i^\vartheta(\infty-)}\leq\indi{\tau_i^{G,\vartheta}(x)=\infty}\leq\indi{x\geq G_i^\vartheta(\infty-)}$ for all $x\in[0,1)$ a.s., implying
\begin{equation*}
\Delta G_i^\vartheta(\infty)S_i^\vartheta(\infty)=\biggl(\int_0^1 \indi{\tau_i^{G,\vartheta}(x)=\infty}\,dx\biggr)S_i^\vartheta(\infty)\quad\text{a.s.}
\end{equation*}
Therefore,
\begin{align}
V_i^\vartheta\bigl(G_i^\vartheta,G_j^\vartheta\bigr)&=E\biggl[\int_0^1 S_i^\vartheta\bigl(\tau_i^{G,\vartheta}(x)\bigr)\indi{\tau_i^{G,\vartheta}(x)<\infty}\,dx+\int_0^1 S_i^\vartheta(\infty)\indi{\tau_i^{G,\vartheta}(x)=\infty}\,dx\biggv\F_\vartheta\biggr]\nonumber\\
&=E\biggl[\int_0^1 S_i^\vartheta\bigl(\tau_i^{G,\vartheta}(x)\bigr)\,dx\biggv\F_\vartheta\biggr].\label{ESitauGi}
\end{align}
Let $A$ denote the event that \eqref{ESitauGi} exceeds $\esssup_{x\in[0,1)}E[S_i^\vartheta(\tau_i^{G,\vartheta}(x))\mid\F_\vartheta]=:Z_i^\vartheta$ and suppose by way of contradiction $P[A]>0$. Then $E[\indinb{A}\int_0^1 S_i^\vartheta(\tau_i^{G,\vartheta}(x))\,dx]>E[\indinb{A}Z_i^\vartheta]$ by $A\in\F_\vartheta$ and iterated expectations, so $\int_0^1E[\indinb{A}S_i^\vartheta(\tau_i^{G,\vartheta}(x))]\,dx>E[\indinb{A}Z_i^\vartheta]$ by Fubini's Theorem. There would thus exist $x\in(0,1)$ with $E[\indinb{A}Z_i^\vartheta]<E[\indinb{A}S_i^\vartheta(\tau_i^{G,\vartheta}(x))]=E[\indinb{A}E[S_i^\vartheta(\tau_i^{G,\vartheta}(x))\mid\F_\vartheta]]$, which contradicts $Z_i^\vartheta\geq E[S_i^\vartheta(\tau_i^{G,\vartheta}(x))\mid\F_\vartheta]$. So, $V_i^\vartheta(G_i^\vartheta,G_j^\vartheta)\leq Z_i^\vartheta$ a.s., implying \eqref{stopSi}. Given the latter and denoting its right-hand side by $U_{S_i}^\vartheta$, it is a.s.\ binding if and only if $E[V_i^\vartheta(G_i^\vartheta,G_j^\vartheta)]=E[U_{S_i}^\vartheta]$, i.e., by \eqref{ESitauGi} and Fubini, if and only if $\int_0^1E[S_i^\vartheta(\tau_i^{G,\vartheta}(x))]\,dx=E[U_{S_i}^\vartheta]$, resp.\ $S_i^\vartheta(\tau_i^{G,\vartheta}(x))=U_{S_i}^\vartheta$ a.s.\ for a.e.\ $x\in[0,1)$.

If $G_i^\vartheta$ is extended by any feasible $\alpha_i^\vartheta$ instead of the trivial $\alpha^\infty$ given by $\indi{t\geq\infty}$, then it is easy to check from Definitions \ref{def:payoffs_extended} and \ref{def:outcome} that
\begin{align*}
&V_i^\vartheta\bigl(G_i^\vartheta,\alpha_i^\vartheta,G_j^\vartheta,\alpha^\infty\bigr)-V_i^\vartheta\bigl(G_i^\vartheta,\alpha^\infty,G_j^\vartheta,\alpha^\infty\bigr) \\
={}&E\Bigl[\indi{\hat\tau_i^\vartheta<\infty}\Delta G_i^\vartheta\bigl(\hat\tau_i^\vartheta\bigr)\Delta G_j^\vartheta\bigl(\hat\tau_i^\vartheta\bigr)\bigl(1-\alpha_i^\vartheta\bigl(\hat\tau_i^\vartheta\bigr)\bigr)\Bigl(F_{\hat\tau_i^\vartheta}-M_{\hat\tau_i^\vartheta}\Bigr)\Bigv\F_\vartheta\Bigr]
\end{align*}
with $\hat\tau_i^\vartheta=\inf\{t\geq 0\mid\alpha_i^\vartheta(t)>0\}$, so $G_i^\vartheta(\hat\tau_i^\vartheta)=1$. Let $B=\{\indi{\hat\tau_i^\vartheta<\infty}\Delta G_j^\vartheta(\hat\tau_i^\vartheta)(F_{\hat\tau_i^\vartheta}-M_{\hat\tau_i^\vartheta})>0\}$, so $B\in\F_{\hat\tau_i^\vartheta}$, and define $G_n^\vartheta$ for any $n\in\N\setminus\{1,2\}$ by delaying $\Delta G_i^\vartheta(\hat\tau_i^\vartheta)$ to $\hat\tau_i^\vartheta+n^{-1}$ on $B$, i.e., by $G_n^\vartheta(t)=G_i^\vartheta(t)\indi{t<\hat\tau_i^\vartheta}+(G_i^\vartheta(\hat\tau_i^\vartheta-)\indinb{B}+\indinb{B^c})\indi{t-\hat\tau_i^\vartheta\in[0,n^{-1})}+\indi{t\geq\hat\tau_i^\vartheta+n^{-1}}$. Then $(G_n^\vartheta,\alpha^\infty)$ is a feasible (standard) mixed strategy for $\vartheta$. Let furthermore $C\in\F_\vartheta$. By right-continuity of $L$ and $G_j^\vartheta$, $S_i^\vartheta(t+)-S_i^\vartheta(t)=\Delta G_j^\vartheta(t)(F_t-M_t)$. As $S_i^\vartheta$ is of class {\rm (D)}, we obtain the limit in expectation
\begin{align*}
&\lim_{n\to\infty}E\Bigl[\indinb{C}\Bigl(V_i^\vartheta\bigl(G_n^\vartheta,\alpha^\infty,G_j^\vartheta,\alpha^\infty\bigr)-V_i^\vartheta\bigl(G_i^\vartheta,\alpha^\infty,G_j^\vartheta,\alpha^\infty\bigr)\Bigr)\Bigr] \\
={}&\lim_{n\to\infty}E\biggl[\indinb{C}\biggl(\int_{[0,\infty]}S_i^\vartheta(t)dG_n^\vartheta(t)-\int_{[0,\infty]}S_i^\vartheta(t)dG_i^\vartheta(t)\biggr)\biggr] \\
={}&E\Bigl[\indinb{C}\Bigl(S_i^\vartheta(\hat\tau_i^\vartheta+)-S_i^\vartheta(\hat\tau_i^\vartheta)\Bigr)\indinb{B}\Delta G_i^\vartheta(\hat\tau_i^\vartheta)\Bigr]=E\Bigl[\indinb{C}\Delta G_j^\vartheta\bigl(\hat\tau_i^\vartheta\bigr)\Bigl(F_{\hat\tau_i^\vartheta}-M_{\hat\tau_i^\vartheta}\Bigr)\indinb{B}\Delta G_i^\vartheta\bigl(\hat\tau_i^\vartheta\bigr)\Bigr] \\
\geq{}&E\Bigl[\indinb{C}\Bigl(V_i^\vartheta\bigl(G_i^\vartheta,\alpha_i^\vartheta,G_j^\vartheta,\alpha^\infty\bigr)-V_i^\vartheta\bigl(G_i^\vartheta,\alpha^\infty,G_j^\vartheta,\alpha^\infty\bigr)\Bigr)\Bigr].
\end{align*}
Together with \eqref{stopSi} for all pairs $(G_n^\vartheta,G_j^\vartheta)$, this shows that $C=\{V_i^\vartheta(G_i^\vartheta,\alpha_i^\vartheta,G_j^\vartheta,\alpha^\infty)>\esssup_{\tau\geq\vartheta}E[S_i^\vartheta(\tau)\mid\F_\vartheta]\}$ must have probability zero.
\end{proof}

\begin{proof}[{\bf Proof of Lemma \ref{lem:tauLdom}}]
By right-continuity of $L$ and $G_j^\vartheta$, $S_i^\vartheta(\tau+)-S_i^\vartheta(\tau)=\Delta G_j^\vartheta(\tau)(F_\tau-M_\tau)\geq 0$ for any $\tau\in\T$. Now consider the set $\{\tau_i<\tau_L^{*}(\vartheta)\}$, on which a.s.\
\begin{align*}
&E\Bigl[S_i^\vartheta\bigl(\tau_L^{*}(\vartheta)+\bigr)-S_i^\vartheta\bigl(\tau_i\bigr)\Bigv\F_{\tau_i}\Bigr]\\
&=E\biggl[\int_{[\tau_i,\tau_L^{*}(\vartheta)]}F_s\,dG_j^\vartheta(s)+\Bigl(1-G_j^\vartheta\bigl(\tau_L^{*}(\vartheta)\bigr)\Bigr)L_{\tau_L^{*}(\vartheta)}-\Delta G_j^\vartheta(\tau_i)M_{\tau_i}-\Bigl(1-G_j^\vartheta\bigl(\tau_i\bigr)\Bigr)L_{\tau_i}\biggv\F_{\tau_i}\biggr]\\
&\geq E\Bigl[F_{\tau_L^{*}(\vartheta)}\Bigl(G_j^\vartheta\bigl(\tau_L^{*}(\vartheta)\bigr)-G_j^\vartheta\bigl(\tau_i-\bigr)\Bigr)+\Bigl(1-G_j^\vartheta\bigl(\tau_L^{*}(\vartheta)\bigr)\Bigr)L_{\tau_L^{*}(\vartheta)}-\Bigl(1-G_j^\vartheta\bigl(\tau_i-\bigr)\Bigr)L_{\tau_i}\Bigv\F_{\tau_i}\Bigr]\\
&\geq E\Bigl[L_{\tau_L^{*}(\vartheta)}\Bigl(1-G_j^\vartheta\bigl(\tau_i-\bigr)\Bigr)-\Bigl(1-G_j^\vartheta\bigl(\tau_i-\bigr)\Bigr)L_{\tau_i}\Bigv\F_{\tau_i}\Bigr]\geq 0.
\end{align*}
The first inequality is obtained from a change of variable exploiting that $F$ is a supermartingale (demonstrated at the end of the proof), and from $L\geq M$. The second inequality is due to $F\geq L$ and the last one to the optimality of $\tau_L^{*}(\vartheta)$. Note that the latter will be strict if $P[\tau^i<\tau_L^{*}(\vartheta)\text{ and }G_j^\vartheta(\tau_i-)<1]>0$, by suboptimality of any $\vartheta\leq\tau_i<\tau_L^{*}(\vartheta)$. The second estimate in Lemma \ref{lem:tauLdom} follows from setting $\tau_i=\vartheta$ in the previous steps. The next claim is due to iterated expectations at $\tau_L^{*}(\vartheta)\geq\vartheta$ and $E[S_i^\vartheta(\tau_i+)\mid\F_{\tau_L^{*}(\vartheta)}]\leq\esssup_{\tau\geq\tau_L^{*}(\vartheta)}E[S_i^\vartheta(\tau)\mid\F_{\tau_L^{*}(\vartheta)}]$ for any stopping time $\tau_i\geq\tau_L^{*}(\vartheta)$. Indeed, let $A\in\F_{\tau_L^{*}(\vartheta)}$ be the event that the latter fails for given $\tau_i$ and consider the stopping times $\tau_i+n^{-1}$, $n\in\N$. As $S_i^\vartheta$ is of class {\rm (D)}, $E[\indinb{A}S_i^\vartheta(\tau_i+)]=\lim_{n\to\infty}E[\indinb{A}S_i^\vartheta(\tau_i+n^{-1})]\leq E[\indinb{A}\esssup_{\tau\geq\tau_L^{*}(\vartheta)}E[S_i^\vartheta(\tau)\mid\F_{\tau_L^{*}(\vartheta)}]]$, showing that $A$ has probability zero. Finally, the previous and following steps go through identically with $\tau_L^{*}(\vartheta)$ replaced by $\tau_L^{**}(\vartheta)$. 

The announced change of variable argument similar to the proof of Lemma \ref{lem:BRpure} is that
\begin{align*}
&E\Bigl[F_{\tau_L^{*}(\vartheta)}\Bigl(G_j^\vartheta\bigl(\tau_L^{*}(\vartheta)\bigr)-G_j^\vartheta\bigl(\tau_i-\bigr)\Bigr)\Bigv\F_{\tau_i}\Bigr]-E\biggl[\int_{[\tau_i,\tau_L^{*}(\vartheta)]}F_s\,dG_j^\vartheta(s)\biggv\F_{\tau_i}\biggr]\displaybreak[0]\\
&=E\biggl[\int_0^1\Bigl(F_{\tau_L^{*}(\vartheta)}-F_{\tau_j^{G,\vartheta}(x)}\Bigr)\indi{\tau_j^{G,\vartheta}(x)\in[\tau_i,\tau_L^{*}(\vartheta)]}\,dx\biggv\F_{\tau_i}\biggr]
\end{align*}
cannot exceed $\esssup_{\tau\geq\tau_i}E[(F_{\tau_L^{*}(\vartheta)}-F_{\tau})\indi{\tau\in[\tau_i,\tau_L^{*}(\vartheta)]}\mid\F_{\tau_i}]$. This is nonpositive by iterated expectations at $\tau$, with $E[(F_{\tau_L^{*}(\vartheta)}-F_{\tau})\mid\F_{\tau}]\leq 0$ on $\{\tau\leq\tau_L^{*}(\vartheta)\}$ as $F$ is a supermartingale.
\end{proof}

\begin{proof}[{\bf Proof of Proposition \ref{prop:pureeql}}]
Let $\vartheta\in\T$ and $C\in\F_{\tau_L^{*}(\vartheta)}$. Then $\tau_1^*$, $\tau_2^*$ as hypothesized are stopping times thanks to Lemma \ref{lem:tauC} by $\tau_L^{*}(\vartheta)\leq\tau_F(\vartheta)$ a.s. To verify the optimality of $\tau_1^*$, it suffices by Lemmas \ref{lem:BRpure} and \ref{lem:tauLdom} to consider stopping $S_1^\vartheta$ from $\tau_L^{*}(\vartheta)$ on. On $C$, $S^\vartheta_1(t)=L_{t\wedge\tau_F(\vartheta)}$ for all $t\geq\tau_L^{*}(\vartheta)$, such that stopping immediately at $\tau_L^{*}(\vartheta)$ is optimal by its optimality for $L$. On $C^c$, $S^\vartheta_1(t)=F_{\tau_L^{*}(\vartheta)}\geq M_{\tau_L^{*}(\vartheta)}$ for all $t>\tau_L^{*}(\vartheta)$, with equality on $\{\tau_F(\vartheta)=\tau_L^{*}(\vartheta)\}$ by hypothesis. Hence, $\tau_F(\vartheta)$ is optimal on $C^c$. The same argument applies to $\tau_2^*$, swapping $C$ and $C^c$.

We can use $\tau_F(\vartheta):=\inf\{t\geq\vartheta\mid F_t=M_t\}$ because then $\tau_F(\vartheta)\geq\tau_L^{*}(\vartheta)$ a.s. Indeed, as $F$ is a supermartingale dominating $L$, it also dominates the Snell envelope $U_L$. Therefore, at $\tau_F(\vartheta)$, $F=M$ (by right-continuity) implies that $F\geq U_L\geq L\geq M$ must bind throughout. Hence, $\tau_F(\vartheta)\geq\inf\{t\geq\vartheta\mid U_L(t)=L_t\}=\tau_L^{*}(\vartheta)$.
\end{proof}

\begin{proof}[{\bf Proof of Theorem \ref{thm:mixedeql}}]
Let $\vartheta,\tau^\vartheta\in\T$ be as hypothesized. As they remain fixed, we may suppress them as superscripts (except from $\tau^\vartheta$) in this proof to ease readability. $\tilde L$ is right-continuous a.s.\ and of class {\rm (D)}, so it has a Snell envelope $U_{\tilde L}$ with an integrable and predictable compensator $D_{\tilde L}$. On $[\vartheta,\infty]$, $\tilde L$ is upper-semi-continuous from the left in expectation, and then $D_{\tilde L}$ is a.s.\ continuous; see Lemma \ref{lem:DLcont} and cf.\ fn.s \ref{fn:Lusc}, \ref{fn:Lrc}.

Let now $G_i$, $G_j$ resp.\ be given by \eqref{Geql}, \eqref{Gjeql}. $G_i$ represents a standard mixed strategy: it is adapted as $\tau_i\in\T$, and it is a.s.\ right-continuous, nondecreasing, has $G_i(t)=0$ for $t\in[0,\vartheta)$, and $G_i(\infty)=1$. It can only jump at $\tau_i\leq\tau^\vartheta$: $\indi{F>L}(F-L)^{-1}$ can be understood as a Radon-Nikodym derivative, such that the integral defines a measure on $\R_+$ which is absolutely continuous w.r.t.\ the (finite) atomless measure $dD_{\tilde L}$.\footnote{%
The new measure is also \nbd{\sigma}finite as $\{F>L\}=\bigcup_{n\in\N}\{F-L\geq\frac{1}{n}\}$.
}
$G_j$ is even continuous on $[\vartheta,\tau^\vartheta)$.

To prove that the standard mixed strategy represented by $G_j$ is a best reply to $G_i$ (even among extended mixed strategies), it suffices by Lemma \ref{lem:BRpure} and its proof to show that for some random variable $\bar Z_j$ and every stopping time $\tau\geq\vartheta$ we have
\begin{equation}\label{optEStau}
E\Bigl[S_j(\tau)\Bigv\F_\vartheta\Bigr]\leq E\Bigl[\bar Z_j\Bigv\F_\vartheta\Bigr],
\end{equation}
with equality if $dG_j(\tau)>0$. Indeed, then $\esssup_{\tau\geq\vartheta}E[S_j(\tau)\mid\F_\vartheta]\leq E[\bar Z_j\mid\F_\vartheta]$, and by the equalities, it then holds in the argument following \eqref{ESitauGi} that $E[S_j(\tau_j^{G,\vartheta}(x))\mid\F_\vartheta]=E[\bar Z_j\mid\F_\vartheta]=:Z_j$ for all $x\in[0,1)$, so it also applies with all inequalities reversed (except $P[A]>0$), hence showing that $V_j(G_j,G_i)=Z_j= E[\bar Z_j\mid\F_\vartheta]$. 

To establish \eqref{optEStau}, consider an arbitrary stopping time $\tau\geq\vartheta$. Suppose first also $\tau\leq\tau_i$. On $[\vartheta,\tau_i)$, $G_i$ satisfies $dG_i=(1-G_i)(F-L)^{-1}\indi{F>L}\,dD_{\tilde L}$ by construction, and $\indi{F\leq L}\,dD_{\tilde L}=0$ by definition of $\tau_i$. Hence,
\begin{equation}\label{F-LdG}
\int_{[\vartheta,\tau)}(F-L)\,dG_i=\int_{[\vartheta,\tau)}(1-G_i)\indi{F>L}\,dD_{\tilde L}=\int_{[\vartheta,\tau)}(1-G_i)\,dD_{\tilde L}\qquad\text{a.s.},
\end{equation}
where $\int L\,dG_i$ is well defined by Lemma \ref{lem:LdG} and $L$ of class {\rm (D)}. 
Applying integration by parts to the right-hand side (adjusting for $[\vartheta,\tau)$ closed on the left, open on the right, and recalling that $D_{\tilde L}$ is continuous) yields 
\begin{align}\label{1-GdD}
\int_{[\vartheta,\tau)}(1-G_i)\,dD_{\tilde L}={}&\int_{[\vartheta,\tau)}D_{\tilde L}\,dG_i+\bigl(1-G_i(\tau-)\bigr)D_{\tilde L}(\tau)-\bigl(1-G_i(\vartheta-)\bigr)D_{\tilde L}(\vartheta).
\end{align}
Using \eqref{F-LdG}, \eqref{1-GdD}, and $G_i(\vartheta-)=0$, now
\begin{align*}
S_j(\tau)&=\int_{[\vartheta,\tau)}F\,dG_i+\Delta G_i(\tau)M_\tau+\bigl(1-G_i(\tau)\bigr)L_\tau \nonumber\\
&=\int_{[\vartheta,\tau)}\bigl(L+D_{\tilde L}\bigr)\,dG_i+\Delta G_i(\tau)\bigl(M_\tau+D_{\tilde L}(\tau)\bigr)+\bigl(1-G_i(\tau)\bigr)\bigl(L_\tau+D_{\tilde L}(\tau)\bigr)-D_{\tilde L}(\vartheta). 
\end{align*}
Next, as the martingale component $M_{\tilde L}$ of the Snell envelope is uniformly integrable, $\int M_{\tilde L}\,dG_i$ is well defined by Lemma \ref{lem:LdG}, and we can make the change of variable to show that
\begin{align*}
&E\biggl[\int_{[\vartheta,\tau)}M_{\tilde L}\,dG_i\biggv\F_\vartheta\biggr]=E\Bigl[M_{\tilde L}(\tau)\bigl(G_i(\tau-)-G_i(\vartheta-)\bigr)\Bigv\F_\vartheta\Bigr] \displaybreak[0]\\
={}&-E\Bigl[M_{\tilde L}(\tau)\bigl(1-G_i(\tau-)\bigr)\Bigv\F_\vartheta\Bigr]+M_{\tilde L}(\vartheta)\bigl(1-G_i(\vartheta-)\bigr)
\end{align*}
as $M_{\tilde L}$ is a martingale. Indeed, similarly as in the proof of Lemma \ref{lem:BRpure},
\begin{align*}
&E\biggl[\int_{[\vartheta,\tau)}M_{\tilde L}(t)\,dG_i(t)\biggv\F_\vartheta\biggr]-E\Bigl[M_{\tilde L}(\tau)\bigl(G_i(\tau-)-G_i(\vartheta-)\bigr)\Bigv\F_\vartheta\Bigr] \displaybreak[0]\\
={}&E\biggl[\int_0^1\Bigl(M_{\tilde L}\bigl(\tau^G_i(x)\bigr)-M_{\tilde L}(\tau)\Bigr)\indi{\tau^G_i(x)\in[\vartheta,\tau)}\,dx\biggv\F_\vartheta\biggr]
\end{align*}
cannot exceed $\esssup_{\tau'\geq\vartheta}E[(M_{\tilde L}(\tau')-M_{\tilde L}(\tau))\indi{\tau'\in[\vartheta,\tau)}\mid\F_\vartheta]$, which is zero by iterated expectations at $\tau'$, with $E[(M_{\tilde L}(\tau')-M_{\tilde L}(\tau))\mid\F_{\tau'}]=0$ on $\{\tau'<\tau\}$ due to the martingale property. Switching sign then yields the previously claimed identity.

Combining the last two results with $G_i(\vartheta-)=0$,
\begin{align}\label{ES_j}
E\bigl[S_j(\tau)\bigv\F_\vartheta\bigr]=E\biggl[\int_{[\vartheta,\tau)}&\bigl(L+D_{\tilde L}-M_{\tilde L}\bigr)\,dG_i+\Delta G_i(\tau)\bigl(M_\tau+D_{\tilde L}(\tau)-M_{\tilde L}(\tau)\bigr) \\
+{}&\bigl(1-G_i(\tau)\bigr)\bigl(L_\tau+D_{\tilde L}(\tau)-M_{\tilde L}(\tau)\bigr)\biggv\F_\vartheta\biggr]-D_{\tilde L}(\vartheta)+M_{\tilde L}(\vartheta). \nonumber
\end{align}

On $[\vartheta,\tau)$, $L+D_{\tilde L}-M_{\tilde L}=\tilde L-U_{\tilde L}$ by $\tau\leq\tau_i\leq\tau^\vartheta$, and thus the integral vanishes. Indeed, $dG_i$ is absolutely continuous w.r.t.\ $dD_{\tilde L}$ on $[\vartheta,\tau_i)$, and $\int(U_{\tilde L}-\tilde L)\,dD_{\tilde L}=0$; cf.\ \eqref{(U_L-L)dD_L=0}. 
Furthermore, $\Delta G_i(\tau)=0$ on $\{\tau<\tau_i\}$ and $U_{\tilde L}(\tau)=\tilde L_\tau$ on $\{\tau=\tau_i\}$. Specifically, on $\{\tau_i<\tau^\vartheta\}$, their definition implies $\indi{F=L}dD_{\tilde L}>0$ at $\tau_i$, so $U_{\tilde L}(\tau_i)=\tilde L_{\tau_i}=L_{\tau_i}=F_{\tau_i}$; cf.\ \eqref{U_L=L}. On $\{\tau_i=\tau^\vartheta\}$, $U_{\tilde L}(\tau_i)=\tilde L_{\tau_i}$ as $\tilde L_t$ is constant for $t\in[\tau^\vartheta,\infty]$. Finally, the last term in the expectation vanishes when $\tau\geq\tau^\vartheta$ and otherwise has again $L_\tau=\tilde L_\tau$.
\eqref{ES_j} thus becomes
\begin{align}
E\bigl[S_j(\tau)\bigv\F_\vartheta\bigr]&=E\Bigl[\indi{\tau=\tau_i}\Delta G_i(\tau_i)\bigl(M_{\tau_i}-\tilde L_{\tau_i}\bigr)+\bigl(1-G_i(\tau)\bigr)\bigl(\tilde L_\tau-U_{\tilde L}(\tau)\bigr)\Bigv\F_\vartheta\Bigr]+U_{\tilde L}(\vartheta) \nonumber\\
&\leq E\Bigl[\indi{\tau=\tau_i}\Delta G_i(\tau_i)\bigl(M_{\tau_i}-\tilde L_{\tau_i}\bigr)\Bigv\F_\vartheta\Bigr]+U_{\tilde L}(\vartheta) \label{ESjdG>0}
\end{align}
by $U_{\tilde L}\geq\tilde L$, with equality for $\tau=\tau_i$ by $U_{\tilde L}(\tau_i)=\tilde L_{\tau_i}$ (resp.\ for $dG_j(\tau)>0$, as then necessarily $\tau=\inf\{t\geq\tau\mid D_{\tilde L}(t)>D_{\tilde L}(\tau)\}$ on $\{\tau<\tau_i\}$; cf.\ \eqref{U_L=L} again). We now remove the restriction $\tau\leq\tau_i$ by an estimate. $S_j(t)$ is constant for $t\in(\tau_i,\infty]$, with $S_j(t)-S_j(\tau_i)=\Delta G_i(\tau_i)(F_{\tau_i}-M_{\tau_i})$. Hence, $S_j(\tau)\leq S_j(\tau_i)+\Delta G_i(\tau_i)(\max(F_{\tau_i},M_{\tau_i})-M_{\tau_i})$ on $\{\tau>\tau_i\}$. This estimate also holds on $\{\tau=\tau_i\}$, so for any stopping time $\tau\geq\vartheta$ we have, together with \eqref{ESjdG>0},
\begin{align}
&E\bigl[S_j(\tau)\bigv\F_\vartheta\bigr]=E\Bigl[S_j(\tau\wedge\tau_i)+\indi{\tau>\tau_i}\Delta G_i(\tau_i)\bigl(F_{\tau_i}-M_{\tau_i}\bigr)\Bigv\F_\vartheta\Bigr] \nonumber\\
&\leq E\Bigl[S_j(\tau\wedge\tau_i)+\indi{\tau\geq\tau_i}\Delta G_i(\tau_i)\bigl(\max(F_{\tau_i},M_{\tau_i})-M_{\tau_i}\bigr)\Bigv\F_\vartheta\Bigr] \nonumber\\
&\leq E\Bigl[\indi{\tau\wedge\tau_i=\tau_i}\Delta G_i(\tau_i)\bigl(M_{\tau_i}-\tilde L_{\tau_i}\bigr)+\indi{\tau\geq\tau_i}\Delta G_i(\tau_i)\bigl(\max(F_{\tau_i},M_{\tau_i})-M_{\tau_i}\bigr)\Bigv\F_\vartheta\Bigr]+U_{\tilde L}(\vartheta) \nonumber\\
&=E\Bigl[\indi{\tau\geq\tau_i}\Delta G_i(\tau_i)\bigl(\max(F_{\tau_i},M_{\tau_i})-\tilde L_{\tau_i}\bigr)\Bigv\F_\vartheta\Bigr]+U_{\tilde L}(\vartheta). \label{ESj_est}
\end{align}
Recall that $\tilde L_{\tau^\vartheta}=\max(F_{\tau^\vartheta},M_{\tau^\vartheta})$, and as just observed, $U_{\tilde L}(\tau_i)=\tilde L_{\tau_i}=L_{\tau_i}=F_{\tau_i}$ on $\{\tau_i<\tau^\vartheta\}$. Therefore, we can summarize \eqref{ESj_est} as
\begin{align}
E\bigl[S_j(\tau)\bigv\F_\vartheta\bigr]&\leq E\Bigl[\indi{\tau\geq\tau_i}\indi{\tau_i<\tau^\vartheta}\Delta G_i(\tau_i)\bigl(\max(F_{\tau_i},M_{\tau_i})-F_{\tau_i}\bigr)\Bigv\F_\vartheta\Bigr]+U_{\tilde L}(\vartheta) \nonumber\displaybreak[0]\\
&\leq E\Bigl[\indi{\tau_i<\tau^\vartheta}\Delta G_i(\tau_i)\bigl(\max(F_{\tau_i},M_{\tau_i})-F_{\tau_i}\bigr)\Bigv\F_\vartheta\Bigr]+U_{\tilde L}(\vartheta). \label{ESj_estuni}
\end{align}
This is a $\tau$-independent bound. To construct a stopping time $\tau_a\geq\vartheta$ that attains it, let $C=\{\Delta G_i(\tau_i)(F_{\tau_i}-M_{\tau_i})>0\}$, so $C\in\F_{\tau_i}$, and we may define $\tau_a=\tau_i$ on $C^c$ and $\tau_a=\infty$ on $C$; cf.\ Lemma \ref{lem:tauC}. Note that $\tau_i<\tau_a$ on $C$ by the convention $F_\infty=M_\infty$. Then
\begin{align*}
S_j(\tau_a)=S_j(\tau_i)+\indinb{C}\Delta G_i(\tau_i)\bigl(F_{\tau_i}-M_{\tau_i}\bigr)=S_j(\tau_i)+\Delta G_i(\tau_i)\bigl(\max(F_{\tau_i},M_{\tau_i})-M_{\tau_i}\bigr)
\end{align*}
Therefore, our estimate on $S_j(\tau)$ is binding for $\tau=\tau_a$, and thus also the first inequality in \eqref{ESj_est}. The second is binding because it represents \eqref{ESjdG>0} for $\tau=\tau_a\wedge\tau_i=\tau_i$. Hence the first inequality in \eqref{ESj_estuni} binds, and finally the second by $\tau_a\geq\tau_i$. This means that $G_j$ is a best reply to $G_i$ if and only if $V_j(G_j,G_i)=E[\int S_j\,dG_j\mid\F_\vartheta]=E[S_j(\tau_a)\mid\F_\vartheta]$. 

To reduce this to the claimed conditions, we can use \eqref{ESjdG>0}--\eqref{ESj_estuni} as follows. For all $\tau\geq\vartheta$: 
\begin{align*}
E\bigl[&S_j(\tau_a)\bigv\F_\vartheta\bigr]=E\bigl[S_j(\tau)+S_j(\tau_a)-S_j(\tau\wedge\tau_i)-\indi{\tau>\tau_i}\bigl(S_j(\tau)-S_j(\tau_i)\bigr)\bigv\F_\vartheta\bigr] \displaybreak[0]\\
={}E\Bigl[&S_j(\tau)+\indi{\tau_i<\tau^\vartheta}\Delta G_i(\tau_i)\bigl(\max(F_{\tau_i},M_{\tau_i})-F_{\tau_i}\bigr) -\indi{\tau\wedge\tau_i=\tau_i}\Delta G_i(\tau_i)\bigl(M_{\tau_i}-\tilde L_{\tau_i}\bigr) \\
&-\bigl(1-G_i(\tau\wedge\tau_i)\bigr)\bigl(\tilde L_{\tau\wedge\tau_i}-U_{\tilde L}(\tau\wedge\tau_i)\bigr) -\indi{\tau>\tau_i}\Delta G_i(\tau_i)\bigl(F_{\tau_i}-M_{\tau_i}\bigr)\Bigv\F_\vartheta\Bigr].
\end{align*}
The left-hand side is independent of $\tau$, so by the argument following \eqref{ESitauGi} resp.\ \eqref{optEStau} we can integrate inside the expectation on the right-hand side by $dG_j(t)$, replacing $\tau$ by $t$, to obtain
\begin{align*}
E\biggl[&\int_{[0,\infty]}S_j(t)\,dG_j(t)+\int_{[0,\infty]}\indi{\tau_i<\tau^\vartheta}\Delta G_i(\tau_i)\bigl(\max(F_{\tau_i},M_{\tau_i})-F_{\tau_i}\bigr)\,dG_j(t) \\
-{}&\int_{[0,\infty]}\indi{t\wedge\tau_i=\tau_i}\Delta G_i(\tau_i)\bigl(M_{\tau_i}-\tilde L_{\tau_i}\bigr)\,dG_j(t)-\int_{[0,\infty]}\bigl(1-G_i(t\wedge\tau_i)\bigr)\bigl(\tilde L_{t\wedge\tau_i}-U_{\tilde L}(t\wedge\tau_i)\bigr)\,dG_j(t) \\
-{}&\int_{[0,\infty]}\indi{t>\tau_i}\Delta G_i(\tau_i)\bigl(F_{\tau_i}-M_{\tau_i}\bigr)\,dG_j(t)\biggv\F_\vartheta\biggr].
\end{align*}
The fourth integral vanishes again as $dG_j$ is absolutely continuous w.r.t.\ $dD_{\tilde L}$ on $[0,\tau_i)$ and $U_{\tilde L}(\tau_i)=\tilde L_{\tau_i}$ (resp.\ as \eqref{ESjdG>0} binds whenever $dG_j(\tau)>0$). Therefore,
\begin{align*}
E\bigl[&S_j(\tau_a)\bigv\F_\vartheta\bigr]-E\biggl[\int_{[0,\infty]}S_j(t)\,dG_j(t)\biggv\F_\vartheta\biggr] \\
=E\Bigl[&\indi{\tau_i<\tau^\vartheta}\Delta G_i(\tau_i)\bigl(\max(F_{\tau_i},M_{\tau_i})-F_{\tau_i}\bigr)-\bigl(1-G_j(\tau_i-)\bigr)\Delta G_i(\tau_i)\bigl(M_{\tau_i}-\tilde L_{\tau_i}\bigr) \\
&-\bigl(1-G_j(\tau_i)\bigr)\Delta G_i(\tau_i)\bigl(F_{\tau_i}-M_{\tau_i}\bigr)\Bigv\F_\vartheta\Bigr] \\
=E\Bigl[&\indi{\tau_i<\tau^\vartheta}\Delta G_i(\tau_i)\bigl(\max(F_{\tau_i},M_{\tau_i})-F_{\tau_i}\bigr)+\Delta G_j(\tau_i)\Delta G_i(\tau^\vartheta)\bigl(\max(F_{\tau^\vartheta},M_{\tau^\vartheta})-M_{\tau^\vartheta}\bigr)\Bigv\F_\vartheta\Bigr],
\end{align*}
where the last step follows from $G_j(\tau^\vartheta)=1$, the definition of $\tilde L_{\tau^\vartheta}$, and that on $\{\tau_i<\tau^\vartheta\}$, $\Delta G_j(\tau_i)=0$ and furthermore $\Delta G_i(\tau_i)>0$ only if $\tilde L_{\tau_i}=L_{\tau_i}=F_{\tau_i}$. The last expectation vanishes if and only if both nonnegative terms inside do, i.e., if and only if a.s.\ $\Delta G_i(\tau_i)(M_{\tau_i}-F_{\tau_i})\leq 0$ on $\{\tau_i<\tau^\vartheta\}$ and $\Delta G_i(\tau^\vartheta)(M_{\tau^\vartheta}-F_{\tau^\vartheta})\geq 0$, noting that $\Delta G_i(\tau^\vartheta)>0$ implies $\tau_i=\tau^\vartheta$ and $\Delta G_j(\tau^\vartheta)=\Delta G_i(\tau^\vartheta)$. In this case, the value of the bound established in \eqref{ESj_estuni}, and thus of $V_j(G_j,G_i)=E[\int S_j\,dG_j\mid\F_\vartheta]=E[S_j(\tau_a)\mid\F_\vartheta]$, becomes simply $U_{\tilde L}(\vartheta)$.

The previous arguments are also used to show when $G_i$ is a best reply to $G_j$, but in two steps. $G_j$ also satisfies $dG_j=(1-G_j)(F-L)^{-1}\indi{F>L}\,dD_{\tilde L}$ on $[\vartheta,\tau_i)$, but possibly $G_j(\tau_i)<1$. However, \eqref{ESjdG>0} only used $U_{\tilde L}=\tilde L$ at $\tau_i$, so we obtain it for switched roles and still $\tau\leq\tau_i$. This restriction now has to be removed stepwise, as $S_i(t)$ need not be constant for $t\in(\tau_i,\infty]$ on $\{\tau_i<\tau^\vartheta\}$. Therefore, analogously to $C$ and $\tau_a$, let $D=\{\Delta G_j(\tau_i)(F_{\tau_i}-M_{\tau_i})>0\}\in\F_{\tau_i}$ and $\tau_b\in\T$ satisfy $\tau_b=\tau_i$ on $D^c$ and $\tau_b=\infty$ else, so $\tau_b>\tau_i$ only on $\{\tau_i=\tau^\vartheta\}$ as otherwise $\Delta G_j(\tau_i)=0$. Assuming $\tau\leq\tau_b$ a.s., then also $\tau>\tau_i$ only on $\{\tau_i=\tau^\vartheta\}$, on which $S_i(t)$ becomes constant, so the estimate used for $S_j(\tau)$ also applies for switched roles, leading to \eqref{ESj_estuni} for $j$ and every $\tau\leq\tau_b$. Now $\tau_b$ attains the bound, which trivially becomes $U_{\tilde L}(\vartheta)$ by $\indi{\tau_i<\tau^\vartheta}\Delta G_j(\tau_i)=0$. As $\tau_b\geq\tau_i$, $\int_{[0,\infty]}(\cdot)\,dG_i=\int_{[0,\tau_b]}(\cdot)\,dG_i$, and thus $V_i(G_i,G_j)=E[S_i(\tau_b)\mid\F_\vartheta]$ by switching roles if and only if $E[\Delta G_i(\tau^\vartheta)\Delta G_j(\tau^\vartheta)(\max(F_{\tau^\vartheta},M_{\tau^\vartheta})-M_{\tau^\vartheta})\mid\F_\vartheta]$ vanishes, i.e., if and only if $\Delta G_i(\tau^\vartheta)(M_{\tau^\vartheta}-F_{\tau^\vartheta})\geq 0$ a.s. Then also $V_i(G_i,G_j)=U_{\tilde L}(\vartheta)$. 

To show that $E[S_i(\tau_b)\mid\F_\vartheta]\geq E[S_i(\tau)\mid\F_\vartheta]$ for \emph{any} $\tau\geq\vartheta$, we now start with  $\tau\leq\tau^\vartheta$ and then obtain an inequality instead of the second inequality in \eqref{F-LdG}, as $\indi{F>L}(1-G_j)\,dD_{\tilde L}\leq (1-G_j)\,dD_{\tilde L}$ need not bind on $\{\tau>\tau_i\}$. However, carrying on this inequality, we can apply all subsequent steps for switched roles and $\tau^\vartheta$ in place of $\tau_i$, to arrive at the analogue of \eqref{ESj_estuni}, showing that in fact $E[S_i(\tau)\mid\F_\vartheta]\leq U_{\tilde L}(\vartheta)$ for every $\tau\geq\vartheta$.

The arguments used for $G_j$ against $G_i$ show that $G_i$ is a best reply to itself if and only if
\begin{align*}
&E\Bigl[\indi{\tau_i<\tau^\vartheta}\Delta G_i(\tau_i)\bigl(\max(F_{\tau_i},M_{\tau_i})-F_{\tau_i}\bigr)+\bigl(\Delta G_i(\tau_i)\bigr)^2\bigl(\tilde L_{\tau_i}-M_{\tau_i}\bigr)\Bigv\F_\vartheta\Bigr]=0 \\
\Leftrightarrow\ &E\Bigl[\indi{\tau_i<\tau^\vartheta}\Delta G_i(\tau_i)\bigl((1-\Delta G_i(\tau_i))(M_{\tau_i}-F_{\tau_i})^++\bigl(\Delta G_i(\tau_i)\bigr)^2(F_{\tau_i}-M_{\tau_i})^+\Bigv\F_\vartheta\Bigr]=0,
\end{align*}
using that $\tilde L_{\tau_i}=\max(F_{\tau^\vartheta},M_{\tau^\vartheta})$ on $\{\tau_i=\tau^\vartheta\}$ and $\tilde L_{\tau_i}=L_{\tau^\vartheta}=F_{\tau^\vartheta}$ on $\{\tau_i<\tau^\vartheta\}$ as argued before. This expectation vanishes if and only if $\Delta G_i(\tau_i)(M_{\tau_i}-F_{\tau_i})\geq 0$, with equality on $\{\tau_i<\tau^\vartheta\}$ whenever $\Delta G_i(\tau_i)<1$. Then we can finally represent $V_i(G_i,G_i)=E[S_j(\tau_a)\mid\F_\vartheta]$ as
\begin{align*}
E\Bigl[\indi{\tau_i<\tau^\vartheta}\indi{\Delta G_i(\tau_i)>0}\bigl(M_{\tau_i}-F_{\tau_i}\bigr)\Bigv\F_\vartheta\Bigr]+U_{\tilde L}(\vartheta),
\end{align*}
where $U_{\tilde L}(\vartheta)=E[\tilde L_{\tau_{\tilde L}^{**}(\vartheta)}\mid\F_\vartheta]$ with $\tau_{\tilde L}^{**}(\vartheta)=\inf\{t\geq\vartheta\mid D_{\tilde L}(t)>D_{\tilde L}(\vartheta)\}$; cf.\ \eqref{U_L=L}. Moreover, $\tau_i<\tau^\vartheta$ and $\Delta G_i(\tau_i)=1$ if and only if $\tau_i=\tau_{\tilde L}^{**}(\vartheta)<\tau^\vartheta$ and $L_{\tau_i}=F_{\tau_i}=\tilde L_{\tau_i}$, so we can rewrite
\begin{equation*}
V_i(G_i,G_i)=E\Bigl[\tilde L_{\tau_{\tilde L}^{**}(\vartheta)}+\indinb{\{\tau_{\tilde L}^{**}(\vartheta)<\tau^\vartheta\}\cap\{L_{\tau_{\tilde L}^{**}(\vartheta)}=F_{\tau_{\tilde L}^{**}(\vartheta)}\}}\bigl(M_{\tau_{\tilde L}^{**}(\vartheta)}-\tilde L_{\tau_{\tilde L}^{**}(\vartheta)}\bigr)\Bigv\F_\vartheta\Bigr]. \qedhere
\end{equation*}
\end{proof}

\begin{remark}\label{rem:eqlLusc}
Theorem \ref{thm:mixedeql} remains true if $L$ is only upper-semi-continuous from the right (and the left), but $L\equiv M$. Then $D_{\tilde L}^{\tau^\vartheta}$ will be left-continuous (see fn.\ \ref{fn:Lusc}) and there exists a feasible mixed strategy $G_i^\vartheta$ given by 
\begin{equation*}
G_i^\vartheta(t):=1-\exp\biggl\{-\int_\vartheta^t\frac{\indi{F_s>L_s}\,d(D_{\tilde L}^{\tau^\vartheta})^c(s)}{F_s-L_s}-\sum_{[\vartheta,t]}\ln\biggl(\frac{\indi{F_s>L_s}\Delta D_{\tilde L}^{\tau^\vartheta}(s)}{F_s-L_s}+1\biggr)\biggr\}
\end{equation*}
for $t\in[\vartheta,\tau_i^\vartheta)$, where $(D_{\tilde L}^{\tau^\vartheta})^c$ is the continuous part of $D_{\tilde L}^{\tau^\vartheta}$ and $\Delta D_{\tilde L}^{\tau^\vartheta}(s)=D_{\tilde L}^{\tau^\vartheta}(s+)-D_{\tilde L}^{\tau^\vartheta}(s)$, and which then satisfies
\begin{equation*}
dG_i^\vartheta(t)=\bigl(1-G_i^\vartheta(t)\bigr)\frac{\indi{F_t>L_t}\,dD_{\tilde L}^{\tau^\vartheta}(t+)}{F_t-L_t}.
\end{equation*}
Then the only modifications in the proof are that we put $dD_{\tilde L}(\,\cdot\ +)$ on the right-hand side of \eqref{F-LdG} and the left-hand side (only!) of \eqref{1-GdD}. We do not have right-continuity of $U_{\tilde L}-\tilde L$, but $\int_{[\vartheta,\tau)}(U_{\tilde L}-\tilde L)\,dG_i=0$ still holds in \eqref{ES_j}: $dG_i(\cdot)$ is absolutely continuous w.r.t.\ $dD_{\tilde L}(\cdot\,+)$ on $[\vartheta,\tau_i)$, for which we can apply a change of variable as in Lemma \ref{lem:LdG} with $\tau^{D_{\tilde L}}(x):=\inf\{t\geq 0\mid D_{\tilde L}(t+)>x\}$. 
Then $\tau^{D_{\tilde L}}(x)=t\Leftrightarrow\forall s>t\colon D_{\tilde L}(t-)\leq x<D_{\tilde L}(s)$, implying $U_{\tilde L}=\tilde L$ a.s.\ at $\tau^{D_{\tilde L}}(x)$; cf.\ \eqref{tauopt}. Hence, $E[\int_{[0,\infty)}\indi{U_{\tilde L}>\tilde L}\,dD_{\tilde L}(+)]=\int_0^\infty E[\indi{U_{\tilde L}(\tau^{D_{\tilde L}}(x))>\tilde L(\tau^{D_{\tilde L}}(x))}]\,dx=0$. Finally, when $\tau<\tau^\vartheta$ and $\Delta G_i(\tau)>0$ in \eqref{ES_j}, then simply $U_{\tilde L}(\tau)=\tilde L_{\tau}=L_{\tau}\equiv M_{\tau}$ now, so it is still enough to consider $\Delta G_i(\tau_i)$ in all of the following, where also still $\Delta G_i(\tau_i)>0$ on $\{\tau_i<\tau^\vartheta\}$ only if $U_{\tilde L}(\tau_i)=\tilde L_{\tau_i}=L_{\tau_i}=F_{\tau_i}$.
\end{remark}

\begin{proof}[{\bf Proof of Theorem \ref{thm:SPE}}]
We only need to establish time-consistency. If the hypothesis holds, then $\{(\vartheta\vee\vartheta')\leq(\tau^\vartheta\wedge\tau^{\vartheta'})\}$ differs from $\{(\vartheta\vee\vartheta')\leq\tau^\vartheta=\tau^{\vartheta'}\}:=A\in\F_{(\vartheta\vee\vartheta')}$ at most by a nullset. Only this event is relevant for time-consistency, because $(\vartheta\vee\vartheta')>(\tau^\vartheta\wedge\tau^{\vartheta'})$ a.s. on $A^c$ and there is no restriction when $G_i^\vartheta\vee G^{\vartheta'}_i=1$. With $\tilde L^{\tau^\vartheta}=\tilde L^{\tau^{\vartheta'}}$ a.s.\ on $A$, also
\begin{equation*}
\esssup_{\tau'\in\T\colon\tau'\geq\tau}E\bigl[\tilde L^{\tau^\vartheta}(\tau')\bigv\F_\tau\bigr]=\esssup_{\tau'\in\T\colon\tau'\geq\tau}E\bigl[\tilde L^{\tau^{\vartheta'}}(\tau')\bigv\F_\tau\bigr]\quad\text{a.s.}
\end{equation*}
on $\{\tau\geq(\vartheta\vee{\vartheta'})\}\cap A$ for any $\tau\in\T$, implying $U_{\tilde L}^{\tau^\vartheta}\indi{t\geq(\vartheta\vee{\vartheta'})}=U_{\tilde L}^{\tau^{\vartheta'}}\indi{t\geq(\vartheta\vee{\vartheta'})}$ a.s.\ on $A$ (i.e., the latter two processes are indistinguishable) by the uniqueness of optional projections. Correspondingly, $D_{\tilde L}^{\tau^\vartheta}=D_{\tilde L}^{\tau^{\vartheta'}}$ on $[\vartheta\vee{\vartheta'},\infty]$ a.s.\ by the uniqueness of the Doob-Meyer decomposition. 

Time-consistency in the equivalent form $(1-G_i^\vartheta(t))=(1-G_i^\vartheta({\vartheta'}-))(1-G^{\vartheta'}_i(t))$ reads for the given $G_i^\vartheta$ as
\begin{align*}
&\indi{t<\tau_i^\vartheta}\exp\biggl(-\int_\vartheta^t\frac{\indi{F>L}\,dD^{\tau^\vartheta}_{\tilde L}}{F-L}\biggr) \displaybreak[0]\\
&=\indi{{\vartheta'}\leq\tau_i^\vartheta}\exp\biggl(-\int_\vartheta^{\vartheta'}\frac{\indi{F>L}\,dD^{\tau^\vartheta}_{\tilde L}}{F-L}\biggr)\indi{t<\tau_i^{\vartheta'}}\exp\biggl(-\int_{\vartheta'}^t\frac{\indi{F>L}\,dD^{\tau^{\vartheta'}}_{\tilde L}}{F-L}\biggr),
\end{align*}
which on $A$ and for $t\geq(\vartheta\vee{\vartheta'})$ reduces to the true statement
\begin{align*}
&\indi{t<\tau_i^\vartheta}\exp\biggl(-\int_\vartheta^t\frac{\indi{F>L}\,dD^{\tau^\vartheta}_{\tilde L}}{F-L}\biggr) \\
&=\indi{t<\tau_i^\vartheta}\exp\biggl(-\int_\vartheta^{\vartheta'}\frac{\indi{F>L}\,dD^{\tau^\vartheta}_{\tilde L}}{F-L}\biggr)\exp\biggl(-\int_{\vartheta'}^t\frac{\indi{F>L}\,dD^{\tau^\vartheta}_{\tilde L}}{F-L}\biggr)
\end{align*}
thanks to what we have shown before. The argument for $j$ is analogous.
\end{proof}

\begin{proof}[{\bf Proof of Theorem \ref{thm:symeql}}]
Fix $i,j\in\{1,2\}$, $i\neq j$, and let the extended mixed strategies be as described. For any $\vartheta\in\T$ then $\tau^\vartheta\leq\inf\{t\geq\vartheta\mid  L_t>F_t\}=\inf\{t\geq\vartheta\mid\alpha_i^\vartheta(t)>0\}$, so $\alpha_i^\vartheta(t)>0\Rightarrow G_i^\vartheta(t)=1$ a.s.\ and the same for $j$. The other feasibility conditions for $G_i^\vartheta$, $G_j^\vartheta$ follow from Theorem \ref{thm:mixedeql}, and those for $\alpha_i^\vartheta$, $\alpha_j^\vartheta$ are shown in \cite{RiedelSteg17} (in the proof of Proposition 3.1, not using their assumption that $\vartheta=\inf\{t\geq\vartheta\mid L_t>F_t\}$ and $F\geq M$). Time-consistency of the families $(G_i^\vartheta;\vartheta\in\T)$, $(G_j^\vartheta;\vartheta\in\T)$ follows from Theorem \ref{thm:SPE}, and both $(\alpha_i^\vartheta;\vartheta\in\T)$, $(\alpha_j^\vartheta;\vartheta\in\T)$ are time-consistent because $\alpha_i^\vartheta$ from Proposition \ref{prop:eqlL>F} does not depend on $\vartheta$ (except for setting $\alpha_i^\vartheta(t)=0$ for $t\in[0,\vartheta)$). Now fix arbitrary $\vartheta\in\T$. 

As $\tau^\vartheta\leq\inf\{t\geq\vartheta\mid\alpha_i^\vartheta(t)>0\}$, in fact $\alpha_i^\vartheta\equiv\alpha_i^{\tau^\vartheta}$ and so for $j$, and then it is easy to check from Definitions \ref{def:payoffs_extended} and \ref{def:outcome} that by time-consistency of $G_i^\vartheta$, $G_j^\vartheta$ with $G_i^{\tau^\vartheta}$, $G_j^{\tau^\vartheta}$, resp.,
\begin{align}\label{ViTC}
V_i^\vartheta\bigl(G_i^\vartheta,\alpha_i^\vartheta,G_j^\vartheta,\alpha_j^\vartheta\bigr)={}&E\biggl[\int_{[0,\tau^\vartheta)}\bigl(1-G_j^\vartheta\bigr)L\,dG_i^\vartheta+\int_{[0,\tau^\vartheta)}\bigl(1-G_i^\vartheta\bigr)F\,dG_j^\vartheta+\sum_{[0,\tau^\vartheta)}M\Delta G_i^\vartheta\Delta G_j^\vartheta\nonumber\\
+{}&\bigl(1-G_i^\vartheta(\tau^\vartheta-)\bigr)\bigl(1-G_j^\vartheta(\tau^\vartheta-)\bigr)V_i^{\tau^\vartheta}\bigl(G_i^{\tau^\vartheta},\alpha_i^{\tau^\vartheta},G_j^{\tau^\vartheta},\alpha_j^{\tau^\vartheta}\bigr)\biggv\F_\vartheta\biggr].
\end{align}
Here, $V_i^{\tau^\vartheta}(G_i^{\tau^\vartheta},\alpha_i^{\tau^\vartheta},G_j^{\tau^\vartheta},\alpha_j^{\tau^\vartheta})=\max(F_{\tau^\vartheta},M_{\tau^\vartheta})$ and $(G_i^{\tau^\vartheta},\alpha_i^{\tau^\vartheta})$ is at $\tau^\vartheta$ a best reply to $(G_j^{\tau^\vartheta},\alpha_j^{\tau^\vartheta})$. Indeed, using the stopping time $\hat\tau^\vartheta=\inf\{t\geq\vartheta\mid L_t>F_t\}\geq\tau^\vartheta$, this follows on $\{\tau^\vartheta=\hat\tau^\vartheta\}\in\F_{\tau^\vartheta}$ from Proposition \ref{prop:eqlL>F} applied at $\hat\tau^\vartheta$, whereas it is easily verified on $\{\tau^\vartheta<\hat\tau^\vartheta\}$ by Definitions \ref{def:payoffs_extended}, \ref{def:outcome} for $G_i^{\tau^\vartheta}(\tau^\vartheta)=G_j^{\tau^\vartheta}(\tau^\vartheta)=1$, as then $\tau^\vartheta<\inf\{t\geq\tau^\vartheta\mid\alpha_i^{\tau^\vartheta}(t)+\alpha_j^{\tau^\vartheta}(t)>0\}$ and $M_{\tau^\vartheta}\geq F_{\tau^\vartheta}$ (due to right-continuity).

Suppose player $i$ deviates to any feasible $(G_a^\vartheta,\alpha_a^\vartheta)$. First assume $\alpha_a^\vartheta(t)\equiv 0$ for $t<\tau^\vartheta$. Then the expected payoff can be written like \eqref{ViTC}, with $(G_a^{\tau^\vartheta},\alpha_a^{\tau^\vartheta})$ constructed to be time-consistent, by $G^{\tau^\vartheta}_a(t)=\indi{t\geq\tau^\vartheta}(\indi{G^{\vartheta}_a(\tau^\vartheta-)<1}(G_a^\vartheta(t)-G^{\vartheta}_a(\tau^\vartheta-))/(1-G^{\vartheta}_a(\tau^\vartheta-))+\indi{G^{\vartheta}_a(\tau^\vartheta-)=1})$ and $\alpha_a^{\tau^\vartheta}\equiv\alpha_a^\vartheta$. As this is feasible at $\tau^\vartheta$, $V_i^{\tau^\vartheta}(G_a^{\tau^\vartheta},\alpha_a^{\tau^\vartheta},G_j^{\tau^\vartheta},\alpha_j^{\tau^\vartheta})\leq V_i^{\tau^\vartheta}(G_i^{\tau^\vartheta},\alpha_i^{\tau^\vartheta},G_j^{\tau^\vartheta},\alpha_j^{\tau^\vartheta})$, so replacing the former by the latter in the analogue of \eqref{ViTC} yields at least $V_i^\vartheta(G_a^\vartheta,\alpha_a^\vartheta,G_j^\vartheta,\alpha_j^\vartheta)$. 

Now consider the hypothesis $M_{\tau^\vartheta}=V_i^{\tau^\vartheta}(G_i^{\tau^\vartheta},\alpha_i^{\tau^\vartheta},G_j^{\tau^\vartheta},\alpha_j^{\tau^\vartheta})$ ($=\max(F_{\tau^\vartheta},M_{\tau^\vartheta})$ for the true $M_{\tau^\vartheta}$ as observed before). Then \eqref{ViTC} becomes also player $i$'s expected payoff from standard mixed strategies represented by $G_i^\vartheta$, $G_j^\vartheta$, as $G_i^\vartheta(\tau^\vartheta)=G_j^\vartheta(\tau^\vartheta)=1$. Moreover, the bound for $V_i^\vartheta(G_a^\vartheta,\alpha_a^\vartheta,G_j^\vartheta,\alpha_j^\vartheta)$ constructed analogously to \eqref{ViTC} becomes the expected payoff from the standard mixed strategy given by $G_a^\vartheta$ adjusted to $G_a^\vartheta(\tau^\vartheta)=1$ (which is still feasible) when played against $G_j^\vartheta$. As the hypothesis induces $M_{\tau^\vartheta}\geq F_{\tau^\vartheta}$, and as $F_{\tau_i^\vartheta}\geq M_{\tau_i^\vartheta}$ on $\{\tau_i^\vartheta<\tau^\vartheta\}$ by construction, Theorem \ref{thm:mixedeql} now implies $V_i^\vartheta(G_a^\vartheta,\alpha_a^\vartheta,G_j^\vartheta,\alpha_j^\vartheta)\leq V_i^\vartheta(G_i^\vartheta,\alpha_i^\vartheta,G_j^\vartheta,\alpha_j^\vartheta)$.

Furthermore, Lemma \ref{lem:BRpure} implies that under the hypothesis on $M_{\tau^\vartheta}$, the expected payoff from \emph{any} extended mixed strategy at $\vartheta$ played against the standard mixed strategy represented by $G_j^\vartheta$ can never exceed $V_i^\vartheta(G_i^\vartheta,\alpha_i^\vartheta,G_j^\vartheta,\alpha_j^\vartheta)$. Therefore, to deal with arbitrary feasible $\alpha_a^\vartheta$, it still suffices to bound the expected payoff by one from a feasible strategy played against $G_j^\vartheta$. 

If not necessarily $\tau^\vartheta\leq\hat\tau_a^\vartheta:=\inf\{t\geq\vartheta\mid\alpha_a^\vartheta(t)>0\}$, then the previous argument can be adjusted as follows. As $\alpha_j^\vartheta(t)\equiv 0$ for $t<\tau^\vartheta$, the analogue of \eqref{ViTC} for $(G_a^\vartheta,\alpha_a^\vartheta)$ has integrals and sum restricted to $[0,\tau^\vartheta\wedge\hat\tau_a^\vartheta)$, the expectation additionally includes $\indi{\hat\tau_a^\vartheta<\tau^\vartheta}(\lambda_{L,i}^\vartheta L_{\hat\tau_a^\vartheta}+\lambda_{L,j}^\vartheta F_{\hat\tau_a^\vartheta}+\lambda_{M}^\vartheta M_{\hat\tau_a^\vartheta})$, and the continuation payoff applies on $\{\tau^\vartheta\leq\hat\tau_a^\vartheta\}$. The bound on $V_i^\vartheta(G_a^\vartheta,\alpha_a^\vartheta,G_j^\vartheta,\alpha_j^\vartheta)$ is produced by the same estimate on the continuation value. If $(G_a^\vartheta,\alpha_a^\vartheta)$ is again adjusted to satisfy $G_a^\vartheta(\tau^\vartheta)=1$ and $\alpha_a^\vartheta(t)\equiv 0$ for $t\in[\tau^\vartheta,\infty)$ (which is feasible), then its expected payoff when played against $G_j^\vartheta$ under the same hypothesis on $M_{\tau^\vartheta}$ becomes indeed the present bound. To see this now, the only difference is to note that nothing changes on $\{\hat\tau_a^\vartheta<\tau^\vartheta\}$, because then the outcome probabilities from $(\alpha_a^\vartheta,\alpha_j^\vartheta)$ at $\hat\tau_a^\vartheta$ do not depend on values for $t\geq\tau^\vartheta$, and $M_{\tau^\vartheta}$ has no effect.

Verifying optimality for player $j$ is completely analogous, as the specific roles of $i$ and $j$ have not been used. Similarly, if $F_\tau=M_\tau$ or $\tau=\inf\{t>\tau\mid L_t>F_t\}$ whenever $L_\tau=F_\tau$, then the condition $\Delta G_i^\vartheta(M-F)=0$ at $\tau_i^\vartheta$ a.s.\ on $\{\tau_i^\vartheta<\tau^\vartheta\}$ from Theorem \ref{thm:mixedeql} holds, and the previous arguments prove that $(G_i^\vartheta,\alpha_i^\vartheta)$ is a best reply to itself.
\end{proof}

\subsection{Proof of Proposition \ref{prop:U_min(L,F)}}\label{app:U_min(L,F)}

We begin with three Lemmas \ref{lem:dGi=dGj}--\ref{lem:DG>0}, which establish some important necessary conditions for strategies and payoffs in payoff-symmetric equilibria. They are crucial for the subsequent proof of Proposition \ref{prop:U_min(L,F)}.

\begin{lemma}\label{lem:dGi=dGj}
In any payoff-symmetric subgame-perfect equilibrium and for any $\vartheta\in\T$,
\begin{equation*}
\int_{[0,t)}\indi{L_s\neq F_s}\bigl(1-G_2^\vartheta(s-)\bigr)\,dG_1^\vartheta(s)=\int_{[0,t)}\indi{L_s\neq F_s}\bigl(1-G_1^\vartheta(s-)\bigr)\,dG_2^\vartheta(s)
\end{equation*}
for all $t\in\R_+$ a.s.\ and hence
\begin{equation}\label{(L-F)(Gi-Gj)=0}
\indi{L_t\neq F_t}\frac{dG_1^\vartheta(t)}{\bigl(1-G_1^\vartheta(t-)\bigr)}=\indi{L_t\neq F_t}\frac{dG_2^\vartheta(t)}{\bigl(1-G_2^\vartheta(t-)\bigr)},
\end{equation}
which is to be interpreted as ``${}=0$'' if $(1-G_1^\vartheta(t-))(1-G_2^\vartheta(t-))=0$. Both representations also hold with $(1-G_1^\vartheta(\cdot))$, $(1-G_2^\vartheta(\cdot))$ in place of the left-hand limits.
\end{lemma}

\begin{proof}
First consider arbitrary $\tau\in\T$ with $\vartheta\leq\tau\leq\hat\tau^\vartheta=\inf\{t\geq\vartheta\mid\alpha_1^\vartheta(t)+\alpha_2^\vartheta(t)>0\}$ a.s. Analogously to \eqref{ViTC}, time-consistency and iterated expectations imply for $i,j\in\{1,2\}$, $i\neq j$, that
\begin{align}\label{(1-G_j-)}
V_i^\vartheta\bigl(G_i^\vartheta,\alpha_i^\vartheta,G_j^\vartheta,\alpha_j^\vartheta\bigr)&=E\biggl[\int_{[0,\tau)}\bigl(1-G_j^\vartheta\bigr)L\,dG_i^\vartheta+\int_{[0,\tau)}\bigl(1-G_i^\vartheta\bigr)F\,dG_j^\vartheta+\sum_{[0,\tau)}M\Delta G_i^\vartheta\Delta G_j^\vartheta \nonumber\\
&\qquad+\bigl(1-G_i^\vartheta(\tau-)\bigr)\bigl(1-G_j^\vartheta(\tau-)\bigr)V^\tau_i\bigl(G^\tau_i,\alpha^\tau_i,G^\tau_j,\alpha^\tau_j\bigr)\biggv\F_\vartheta\biggr] \displaybreak[0]\\
&=E\biggl[\int_{[0,\tau)}\bigl(1-G_j^\vartheta(s-)\bigr)L_s\,dG_i^\vartheta(s)+\int_{[0,\tau)}\bigl(1-G_i^\vartheta(s-)\bigr)F_s\,dG_j^\vartheta(s) \nonumber\\
&\qquad+\sum_{[0,\tau)}(M_s-L_s-F_s)\Delta G_i^\vartheta(s)\Delta G_j^\vartheta(s) \nonumber\\
&\qquad+\bigl(1-G_i^\vartheta(\tau-)\bigr)\bigl(1-G_j^\vartheta(\tau-)\bigr)V^\tau_i\bigl(G^\tau_i,\alpha^\tau_i,G^\tau_j,\alpha^\tau_j\bigr)\biggv\F_\vartheta\biggr]. \nonumber
\end{align}
In a payoff-symmetric equilibrium, $V_1^\vartheta(\cdot)-V_2^\vartheta(\cdot)=V_1^\tau(\cdot)-V_2^\tau(\cdot)=0$. Hence,
\begin{equation*}
E\biggl[\int_{[0,\tau)}\bigl(1-G_2^\vartheta(s-)\bigr)(L_s-F_s)\,dG_1^\vartheta-\int_{[0,\tau)}\bigl(1-G_1^\vartheta(s-)\bigr)(L_s-F_s)\,dG_2^\vartheta\biggv\F_\vartheta\biggr]=0
\end{equation*}
for any $\tau\in[\vartheta,\hat\tau^\vartheta]$. The integrals represent two signed optional random measures,\footnote{%
$\int_{[0,t]}(1-G_2^\vartheta(s-))(L_s-F_s)\,dG_1^\vartheta(s)$ and $\int_{[0,t]}(1-G_1^\vartheta(s-))(L_s-F_s)\,dG_2^\vartheta(s)$ are adapted, right-continuous, and of finite variation. Their minimal decomposition is using $(L-F)^{+}$ and $(L-F)^{-}$, respectively.
} 
which agree on all optional sets in $[0,\hat\tau^\vartheta)$ (trivially on $[0,\vartheta)$ by $G_i^\vartheta(\vartheta-)=0$; the optional \nbd{\sigma}field is generated by the stochastic intervals $[0,\tau)$, $\tau\in\T$). $L$ and $F$ are optional processes, so we may cancel $(L-F)\neq 0$ to observe two left-continuous (thus optional) processes $\int_{[0,t)}\indi{L_s\neq F_s}(1-G_2^\vartheta(s-))\,dG_1^\vartheta(s)$ and $\int_{[0,t)}\indi{L_s\neq F_s}(1-G_1^\vartheta(s-))\,dG_2^\vartheta(s)$ that agree in expectation at any stopping time $\tau\leq\hat\tau^\vartheta$. They are thus indistinguishable up to $\hat\tau^\vartheta$ by the uniqueness of optional projections. 

As $G_1^\vartheta(s)\vee G_2^\vartheta(s)=1$ for all $s\geq\hat\tau^\vartheta$, the measures $\int_{[0,t)}\indi{L_s\neq F_s}(1-G_j^\vartheta(s-))\,dG_i^\vartheta(s)$ can only depend on $i$, $j$ on $[\hat\tau^\vartheta]$, specifically when $L_{\hat\tau^\vartheta}\neq F_{\hat\tau^\vartheta}$ and $G_i^\vartheta(\hat\tau^\vartheta)<G_j^\vartheta(\hat\tau^\vartheta)=1$ and hence $\hat\tau^\vartheta=\inf\{t\geq\vartheta\mid\alpha_j^\vartheta(t)>0\}<\inf\{t\geq\vartheta\mid\alpha_i^\vartheta(t)>0\}$. By time-consistency, the same properties hold for the strategies in the subgame starting at $\hat\tau^\vartheta$, and it is then easy to check from Definition \ref{def:outcome} that
\begin{align}
V_i^{\hat\tau^\vartheta}(\cdot)&=G_i^{\hat\tau^\vartheta}(\hat\tau^\vartheta)\bigl(1-\alpha_j^{\hat\tau^\vartheta}({\hat\tau^\vartheta})\bigr)L_{\hat\tau^\vartheta}+\bigl(1-G_i^{\hat\tau^\vartheta}({\hat\tau^\vartheta})\bigr)F_{\hat\tau^\vartheta}+G_i^{\hat\tau^\vartheta}({\hat\tau^\vartheta})\alpha_j^{\hat\tau^\vartheta}({\hat\tau^\vartheta})M_{\hat\tau^\vartheta} \nonumber\\
&=G_i^{\hat\tau^\vartheta}(\hat\tau^\vartheta)\bigl(\bigl(1-\alpha_j^{\hat\tau^\vartheta}({\hat\tau^\vartheta})\bigr)L_{\hat\tau^\vartheta}-F_{\hat\tau^\vartheta}+\alpha_j^{\hat\tau^\vartheta}({\hat\tau^\vartheta})M_{\hat\tau^\vartheta}\bigr)+F_{\hat\tau^\vartheta}, \label{Vialphaj}\\
V_j^{\hat\tau^\vartheta}(\cdot)&=G_i^{\hat\tau^\vartheta}(\hat\tau^\vartheta)\bigl(1-\alpha_j^{\hat\tau^\vartheta}({\hat\tau^\vartheta})\bigr)F_{\hat\tau^\vartheta}+\bigl(1-G_i^{\hat\tau^\vartheta}({\hat\tau^\vartheta})\bigr)L_{\hat\tau^\vartheta}+G_i^{\hat\tau^\vartheta}({\hat\tau^\vartheta})\alpha_j^{\hat\tau^\vartheta}({\hat\tau^\vartheta})M_{\hat\tau^\vartheta} \nonumber\\
&=\alpha_j^{\hat\tau^\vartheta}({\hat\tau^\vartheta})G_i^{\hat\tau^\vartheta}({\hat\tau^\vartheta})\bigl(M_{\hat\tau^\vartheta}-F_{\hat\tau^\vartheta}\bigr)+G_i^{\hat\tau^\vartheta}(\hat\tau^\vartheta)F_{\hat\tau^\vartheta}+\bigl(1-G_i^{\hat\tau^\vartheta}({\hat\tau^\vartheta})\bigr)L_{\hat\tau^\vartheta}. \label{Vjalphaj}
\end{align}
Given $L_{\hat\tau^\vartheta}\neq F_{\hat\tau^\vartheta}$, payoffs are symmetric if and only if $G_i^{\hat\tau^\vartheta}(\hat\tau^\vartheta)(1-\alpha_j^{\hat\tau^\vartheta}({\hat\tau^\vartheta}))=1-G_i^{\hat\tau^\vartheta}({\hat\tau^\vartheta})$. Then $G_i^{\hat\tau^\vartheta}(\hat\tau^\vartheta)>0$, and given also $G_i^{\hat\tau^\vartheta}(\hat\tau^\vartheta)<1$, by linearity in \eqref{Vialphaj} thus $(1-\alpha_j^{\hat\tau^\vartheta}({\hat\tau^\vartheta}))L_{\hat\tau^\vartheta}-F_{\hat\tau^\vartheta}+\alpha_j^{\hat\tau^\vartheta}({\hat\tau^\vartheta})M_{\hat\tau^\vartheta}=0$. With $L_{\hat\tau^\vartheta}\neq F_{\hat\tau^\vartheta}$, this implies $\alpha_j^{\hat\tau^\vartheta}({\hat\tau^\vartheta})>0$, and that $M_{\hat\tau^\vartheta}=F_{\hat\tau^\vartheta}$ only if $\alpha_j^{\hat\tau^\vartheta}({\hat\tau^\vartheta})=1$, but this is excluded by payoff symmetry and $G_i^{\hat\tau^\vartheta}(\hat\tau^\vartheta)<1$. However, $\alpha_j^{\hat\tau^\vartheta}({\hat\tau^\vartheta})\in(0,1)$ and $G_i^{\hat\tau^\vartheta}(\hat\tau^\vartheta)>0$ requires $M_{\hat\tau^\vartheta}=F_{\hat\tau^\vartheta}$ by linearity in \eqref{Vjalphaj}, which shows that we cannot have $L_{\hat\tau^\vartheta}\neq F_{\hat\tau^\vartheta}$ and $G_i^{\hat\tau^\vartheta}(\hat\tau^\vartheta)<1$ with positive probability. This implies that our previous measures agree on all of $[0,\infty)$ a.s.

The representation \eqref{(L-F)(Gi-Gj)=0} is obtained by integrating $[(1-G_1^\vartheta(s-))(1-G_2^\vartheta(s-))]^{-1}$ on $\{G_1^\vartheta(s-)\vee G_2^\vartheta(s-)<1\}$ w.r.t.\ each measure. Finally, $\indi{L\neq F}(1-G_2^\vartheta)\,dG_1^\vartheta=\indi{L\neq F}(1-G_1^\vartheta)\,dG_2^\vartheta$ is obtained analogously, without even rewriting \eqref{(1-G_j-)}.
\end{proof}

\begin{lemma}\label{lem:Gi=Gj}
In any payoff-symmetric subgame-perfect equilibrium and for any $\vartheta\in\T$, it holds that $G_1^\vartheta(t)=G_2^\vartheta(t)$ for all $t\in[\vartheta,\,\inf\{t\geq\vartheta\mid\int_{[0,t]}\indi{L=F}\,(dG_1^\vartheta+dG_2^\vartheta)>0\})$ a.s.
\end{lemma}

\begin{proof}
By Lemma \ref{lem:dGi=dGj}, $\indi{L\neq F}(1-G_j^\vartheta)\,dG_i^\vartheta=\indi{L\neq F}(1-G_i^\vartheta)\,dG_j^\vartheta$, $i,j\in\{1,2\}$, $i\neq j$. On the interval $[\vartheta,\,\inf\{t\geq\vartheta\mid\int_{[0,t]}\indi{L=F}(dG_1^\vartheta+dG_2^\vartheta)>0\})$, we can ignore $\indi{L\neq F}$, so if it is nonempty, then $G_i^\vartheta(\vartheta)=G_j^\vartheta(\vartheta)$ and~-- as long as $G_j^\vartheta(t)<1$~-- $dG_i^\vartheta(t)=\phi_t\,dG_j^\vartheta(t)$ for
\begin{equation}\label{phi}
\phi_t=\frac{1-G_i^\vartheta(t)}{1-G_j^\vartheta(t)},\qquad\phi_0=1.
\end{equation}
By rearranging \eqref{phi}, for all $t\in[\vartheta,\,\inf\{t\geq\vartheta\mid\int_{[0,t]}\indi{L=F}(dG_1^\vartheta+dG_2^\vartheta)>0\text{ or }G_j^\vartheta(t)=1\})$,
\begin{alignat*}{2}
&& G_i^\vartheta(t)=1-\bigl(1-G_j^\vartheta(t)\bigr)\phi_t&=G_i^\vartheta(0)+\int_{(0,t]}\phi_s\,dG_j^\vartheta(s)\\
&\Leftrightarrow\qquad & \phi_t-1&=G_j^\vartheta(t)\phi_t-G_j^\vartheta(0)-\int_{(0,t]}\phi_s\,dG_j^\vartheta(s)\\
&\Leftrightarrow & \phi_t-\phi_0&=\int_{(0,t]}G_j^\vartheta(s-)\,d\phi_s.
\end{alignat*}
The last line is obtained by integration by parts, as $(\phi_t)$ is of finite variation and right-continuous for $t<\inf\{t\geq\vartheta\mid G_j^\vartheta(t)=1\}$. It implies that $\phi_t$ must indeed be constant before $G_j^\vartheta$ attains one. If $G_j^\vartheta$ jumps to one on the given interval when $L\neq F$, then $(1-G_i^\vartheta)\Delta G_j^\vartheta=(1-G_j^\vartheta)\Delta G_i^\vartheta=0$, i.e., $G_i^\vartheta$ must attain one, too, which completes the proof.
\end{proof}

\begin{lemma}\label{lem:DG>0}
In any payoff-symmetric subgame-perfect equilibrium and for any $\vartheta\in\T$, it holds that, on $\{\Delta G_1^\vartheta(\vartheta)\vee\Delta G_2^\vartheta(\vartheta)>0\}$,
\begin{equation*}
F_\vartheta\leq\max(L_\vartheta,M_\vartheta)\quad\text{and}\quad V_1^\vartheta(\cdot)=V_2^\vartheta(\cdot)\leq\Delta G_j^\vartheta(\vartheta)\max(F_\vartheta,M_\vartheta)+\bigl(1-\Delta G_j^\vartheta(\vartheta)\bigr)L_\vartheta
\end{equation*}
for $j=1,2$, a.s.
\end{lemma}

\begin{proof}
Let $i,j\in\{1,2\}$, $i\neq j$. Note that $\Delta G_i^\vartheta(\vartheta)=G_i^\vartheta(\vartheta)$ and recall $G_i^\vartheta(\hat\tau^\vartheta)=1$ for $\hat\tau^\vartheta=\inf\{t\geq\vartheta\mid\alpha_i^\vartheta(t)+\alpha_j^\vartheta(t)>0\}$. The equilibrium payoff must be at least that from the feasible strategy $(G_a^\vartheta,\alpha_i^\vartheta)$ satisfying $G_a^\vartheta(t)\equiv 1$ for $t\geq\vartheta$, which is $V_i^\vartheta(G_a^\vartheta,\alpha_i^\vartheta,G_j^\vartheta,\alpha_j^\vartheta)=G_j^\vartheta(\vartheta)M_\vartheta+(1-G_j^\vartheta(\vartheta))L_\vartheta$ on $\{\vartheta<\hat\tau^\vartheta\}$. Another feasible strategy is $(G_b^\vartheta,\alpha_i^\vartheta)$ given by $G_b^\vartheta(t)=\indi{G_i^\vartheta(\vartheta)<1}(G_i^\vartheta(t)-G_i^\vartheta(\vartheta))(1-G_i^\vartheta(\vartheta))^{-1}+\indi{G_i^\vartheta(\vartheta)=1}$ for $t\geq\vartheta$, which still satisfies time-consistency with $(G_i^{\hat\tau^\vartheta},\alpha_i^{\hat\tau^\vartheta})$. By \eqref{(1-G_j-)} then
\begin{equation}\label{V_iG_i>0}
V_i^\vartheta\bigl(G_i^\vartheta,\alpha_i^\vartheta,G_j^\vartheta,\alpha_j^\vartheta\bigr)=G_i^\vartheta(\vartheta)V_i^\vartheta\bigl(G_a^\vartheta,\alpha_i^\vartheta,G_j^\vartheta,\alpha_j^\vartheta\bigr)+\bigl(1-G_i^\vartheta(\vartheta)\bigr)V_i^\vartheta\bigl(G_b^\vartheta,\alpha_i^\vartheta,G_j^\vartheta,\alpha_j^\vartheta\bigr),
\end{equation}
so $G_i^\vartheta(\vartheta)>0$ implies $V_i^\vartheta(G_i^\vartheta,\alpha_i^\vartheta,G_j^\vartheta,\alpha_j^\vartheta)=V_i^\vartheta(G_a^\vartheta,\alpha_i^\vartheta,G_j^\vartheta,\alpha_j^\vartheta)\geq V_i^\vartheta(G_b^\vartheta,\alpha_i^\vartheta,G_j^\vartheta,\alpha_j^\vartheta)$, i.e., in particular equilibrium payoffs equal to $G_j^\vartheta(\vartheta)M_\vartheta+(1-G_j^\vartheta(\vartheta))L_\vartheta$ on $\{\vartheta<\hat\tau^\vartheta\}$. Consider furthermore the feasible strategies $(G_n^\vartheta,\alpha_i^\vartheta)$ for $n\in\N\setminus\{1,2\}$ satisfying $G_n^\vartheta(t)=\indi{t\geq(\vartheta+n^{-1})\wedge\hat\tau^\vartheta}$. Then $\lim_{n\to\infty}E[\indinb{A}V_i^\vartheta(G_n^\vartheta,\alpha_i^\vartheta,G_j^\vartheta,\alpha_j^\vartheta)]=E[\indinb{A}(G_j^\vartheta(\vartheta)F_\vartheta+(1-G_j^\vartheta(\vartheta))L_\vartheta)]$ for any $A\in\F_\vartheta$ with $A\subseteq\{\vartheta<\hat\tau^\vartheta\}$. Indeed, we have pointwise convergence inside the expectation from Definition \ref{def:payoffs_extended} by right-continuity of $L$ and $G_j^\vartheta$, and we may pass to the limit in expectation as $L$ and $\int F\,dG_j^\vartheta$ are of class {\rm (D)} (the latter by Lemma \ref{lem:SclassD}). As the limit cannot exceed $E[\indinb{A}V_i^\vartheta(G_i^\vartheta,\alpha_i^\vartheta,G_j^\vartheta,\alpha_j^\vartheta)]$, choosing $A=\{\vartheta<\hat\tau^\vartheta\}\cap\{G_i^\vartheta(\vartheta)G_j^\vartheta(\vartheta)(F_\vartheta-M_\vartheta)>0\}$ shows that this must have probability zero. Therefore, $M_\vartheta\geq F_\vartheta$ on $\{\vartheta<\hat\tau^\vartheta\}\cap\{G_1^\vartheta(\vartheta)\wedge G_2^\vartheta(\vartheta)>0\}$, so that both claims hold. On $\{\vartheta<\hat\tau^\vartheta\}\cap\{G_1^\vartheta(\vartheta)\vee G_2^\vartheta(\vartheta)>G_1^\vartheta(\vartheta)\wedge G_2^\vartheta(\vartheta)=0\}$, $L_\vartheta=F_\vartheta$ due to Lemma \ref{lem:dGi=dGj}, which implies the first claimed inequality, and also the second as now $V_i^\vartheta(\cdot)=L_\vartheta$ when $G_i^\vartheta(\vartheta)>0$ and otherwise $V_j^\vartheta(\cdot)=L_\vartheta$.

It remains to consider $\{\vartheta=\hat\tau^\vartheta\}$. Suppose specifically $\vartheta=\inf\{t\geq\vartheta\mid\alpha_j^\vartheta(t)>0\}$, so $G_j^\vartheta(\vartheta)=1$. Then it is easy to check from Definition \ref{def:outcome} that the value of $V_i^\vartheta(G_i^\vartheta,\alpha_i^\vartheta,G_j^\vartheta,\alpha_j^\vartheta)=\lambda_{L,i}^\vartheta L_\vartheta+\lambda_{L,j}^\vartheta F_\vartheta+\lambda_{M}^\vartheta M_\vartheta$ is a convex combination of $(1-\alpha_j^\vartheta(\vartheta))L_\vartheta+\alpha^\vartheta_j(\vartheta)M_\vartheta$ and $F_\vartheta$, and that these are resp.\ player $i$'s payoffs from the feasible strategies $(G^\vartheta,\alpha^\infty)$ and $(G^\infty,\alpha^\infty)$, where $G^\vartheta(t)=\indi{t\geq\vartheta}$ and $\alpha^\infty(t)=G^\infty(t)=\indi{t\geq\infty}$, when played against the given $(G_j^\vartheta,\alpha_j^\vartheta)$.\footnote{%
In particular in the last case in  Definition \ref{def:outcome}, when $\vartheta=\inf\{t\geq\vartheta\mid\alpha_i^\vartheta(t)>0\}=\inf\{t\geq\vartheta\mid\alpha_j^\vartheta(t)>0\}$, but $\alpha_i^\vartheta(\vartheta)=\alpha_j^\vartheta(\vartheta)=0$, then simply $V_i^\vartheta(\cdot)=\lambda_{L,i}L_\vartheta+(1-\lambda_{L,i})F_\vartheta$.
}
Hence, on $\{F_\vartheta>\max(M_\vartheta,L_\vartheta)\}$, we would need to have $V_i^\vartheta(\cdot)=F_\vartheta$ and thus $\lambda_{L,j}^\vartheta=1$, implying $V_j^\vartheta(\cdot)=L_\vartheta$, contradicting payoff symmetry. Therefore, the first claim must hold. As to the second, given $G_j^\vartheta(\vartheta)=1$, suppose $V_i^\vartheta(\cdot)>\max(F_\vartheta,M_\vartheta)$, so necessarily $\lambda_{L,i}^\vartheta>0$ and $L_\vartheta>F_\vartheta$. Specifically, we would now have $V_i^\vartheta(\cdot)=(1-\alpha_j^\vartheta(\vartheta))L_\vartheta+\alpha^\vartheta_j(\vartheta)M_\vartheta>F_\vartheta$, which is in fact only possible if $\lambda_{L,j}^\vartheta=0$. This, however, would again contradict payoff symmetry.

Still supposing $\vartheta=\inf\{t\geq\vartheta\mid\alpha_j^\vartheta(t)>0\}$, it remains to verify the second claim for $i$ in place of $j$ and $G_i^\vartheta(\vartheta)<1$, whence $\vartheta<\inf\{t\geq\vartheta\mid\alpha_i^\vartheta(t)>0\}$. 
Then $L_\vartheta=F_\vartheta$ by Lemma \ref{lem:dGi=dGj} and the claim holds if $V_i^\vartheta(\cdot)=F_\vartheta$. Otherwise, again $V_i^\vartheta(\cdot)=(1-\alpha_j^\vartheta(\vartheta))L_\vartheta+\alpha^\vartheta_j(\vartheta)M_\vartheta>F_\vartheta$ and thus $\lambda_{L,j}^\vartheta=0$, which would by Definition \ref{def:outcome} clearly contradict $G_i^\vartheta(\vartheta)<1$.
\end{proof}

\begin{proof}[{\bf Proof of Proposition \ref{prop:U_min(L,F)}}]
Suppose $((G_1^\vartheta,\alpha_1^\vartheta);\vartheta\in\T)$, $((G_2^\vartheta,\alpha_2^\vartheta);\vartheta\in\T)$ are a payoff-symmetric subgame-perfect equilibrium, and fix any $\vartheta\in\T$ and $i,j\in\{1,2\}$, $i\neq j$. The proof is build on two observations. First, for any stopping time $\tau\geq\vartheta$, the strategy $(G_i^\tau,\alpha_i^\tau)$ is also feasible in the subgame starting at $\vartheta$, so
\begin{equation}\label{optdelay1}
V_i^\vartheta\bigl(G_i^\vartheta,\alpha_i^\vartheta,G_j^\vartheta,\alpha_j^\vartheta\bigr)\geq V_i^\vartheta\bigl(G_i^\tau,\alpha_i^\tau,G_j^\vartheta,\alpha_j^\vartheta\bigr).
\end{equation}
Second, if $\tau\leq\hat\tau^\vartheta$, then
\begin{align}\label{optdelay2}
&V_i^\vartheta\bigl(G_i^\tau,\alpha_i^\tau,G_j^\vartheta,\alpha_j^\vartheta\bigr)=E\biggl[\int_{[0,\tau)}F\,dG_j^\vartheta+\bigl(1-G_j^\vartheta(\tau-)\bigr)V^\tau_i\bigl(G^\tau_i,\alpha^\tau_i,G^\tau_j,\alpha^\tau_j\bigr)\biggv\F_\vartheta\biggr]
\end{align}
by time-consistency; cf.\ \eqref{(1-G_j-)}. The proof strategy is to identify stopping times $\tau\leq\inf\{t\geq\vartheta\mid G_i^\vartheta(t)\vee G_j^\vartheta(t)\geq 1\}$ such that \eqref{optdelay1} is binding, \eqref{optdelay2} also holds with $L\wedge F$ in place of $F$, and, when passing to a limit, also $L_\tau\wedge F_\tau$ in place of $V_i^\tau(\cdot)$. Then both claims of Proposition \ref{prop:U_min(L,F)} follow from Lemma \ref{lem:BRpure}. Indeed, then the right-hand side of \eqref{optdelay2} equals player $j$'s (!) payoff from the standard mixed strategy represented by $G_j^\vartheta$ when played against $G_a^\vartheta$ given by $G_a^\vartheta(t)=\indi{t\geq\tau}$, and when $L$, $F$ and $M$ are replaced by $L\wedge F$, because then $S_j^\vartheta(t)$ becomes $L_{t\wedge\tau}\wedge F_{t\wedge\tau}$ for all $t\geq\vartheta$.

To establish the desired representation of $V_i^\vartheta(\cdot)$, we begin by showing that \eqref{optdelay1} binds if $G_i^\vartheta(\tau-)\leq 1-\varepsilon$ a.s.\ for some $\varepsilon\in(0,1)$. Suppose the latter holds and set $\lambda=(1-\varepsilon)^{-1}\in(1,\infty)$. Then define $G_a^\vartheta$ by $G_a^\vartheta(t)=\lambda G_i^\vartheta(t)$ for $t\in[0,\tau)$ and $G_a^\vartheta(t)=\lambda G_i^\vartheta(\tau-)+(1-\lambda G_i^\vartheta(\tau-))(G_i^\vartheta(t)-G_i^\vartheta(\tau-))$. This implies $G_i^\vartheta(t)=1\Rightarrow G_a^\vartheta(t)=1$ for all $t\in[0,\infty]$, so $(G_a^\vartheta,\alpha_i^\vartheta)$ is feasible at $\vartheta$, and it satisfies time-consistency with $(G_i^\tau,\alpha_i^\tau)$ by construction. Therefore, we can apply \eqref{(1-G_j-)} also for $G_a^\vartheta$ in place of $G_i^\vartheta$, to see that $V_i^\vartheta(G_a^\vartheta,\alpha_i^\vartheta,G_j^\vartheta,\alpha_j^\vartheta)=\lambda V_i^\vartheta(G_i^\vartheta,\alpha_i^\vartheta,G_j^\vartheta,\alpha_j^\vartheta)-(\lambda-1)V_i^\vartheta(G_i^\tau,\alpha_i^\tau,G_j^\vartheta,\alpha_j^\vartheta)$, and thus, if \eqref{optdelay1} were strict with positive probability, this would contradict optimality of $(G_i^\vartheta,\alpha_i^\vartheta)$.

Hence, with $\tau_i^{G,\vartheta}(x)=\inf\{t\geq\vartheta\mid G_i^\vartheta(t)>x\}$ for any $x\in[0,1)$ and analogously for $j$, \eqref{optdelay1} binds for each of the stopping times
\begin{equation*}
\bar\tau_n:=\tau_i^{G,\vartheta}\bigl(1-n^{-1}\bigr)\wedge\tau_j^{G,\vartheta}\bigl(1-n^{-1}\bigr)\wedge\bar\tau,\quad n\in\N,
\end{equation*}
where $\bar\tau:=\inf\{t\geq\vartheta\mid\int_{[0,t]}\indi{F\geq L}\,(dG_i^\vartheta+dG_j^\vartheta)>0\}$. Moreover, $\bar\tau_n\leq\inf\{t\geq\vartheta\mid G_i^\vartheta(t)\vee G_j^\vartheta(t)\geq 1\}\leq\hat\tau^\vartheta$, such that \eqref{optdelay2} applies, and also with $L\wedge F$ in place of $F$ by definition of $\bar\tau$. The latter still hold for the monotone limit $\bar\tau_\infty:=\lim_{n\to\infty}\bar\tau_n\leq\bar\tau$, but then \eqref{optdelay1} is not necessarily binding anymore. Therefore, we need to show that $E[V_i^\vartheta(G_i^\vartheta,\alpha_i^\vartheta,G_j^\vartheta,\alpha_j^\vartheta)]=E[V_i^\vartheta(G_i^{\bar\tau_\infty},\alpha_i^{\bar\tau_\infty},G_j^\vartheta,\alpha_j^\vartheta)]$, resp., as \eqref{optdelay1} binds for each $\bar\tau_n$, that we can pass to the limit in expectation in \eqref{optdelay2}, with a suitable continuation payoff.

The integral converges as $n\to\infty$, also in expectation because it is of class {\rm (D)} by Lemma \ref{lem:SclassD}. As to the continuation payoffs, we first argue that $\{V_i^{\bar\tau_n}(\cdot);n\in\N\}$ is uniformly integrable. By Definition \ref{def:payoffs_extended}, and as $\lambda_{L,i}^\vartheta,\lambda_{L,j}^\vartheta,\lambda_{M}^\vartheta\in[0,(1-G_i^\vartheta(\hat\tau^\vartheta-))(1-G_j^\vartheta(\hat\tau^\vartheta-)]\subseteq[0,1]$,
\begin{align*}
\abs{V_i^\vartheta\bigl(G_i^\vartheta,\alpha_i^\vartheta,G_j^\vartheta,\alpha_j^\vartheta\bigr)}\leq E\biggl[&\int_{[0,\infty)}\abs{L}\,dG_i^\vartheta+\int_{[0,\infty)}\abs{F}\,dG_j^\vartheta+\int_{[0,\infty)}\abs{M}\,dG_i^\vartheta \\
&+\abs{L_{\hat\tau^\vartheta}}+\abs{F_{\hat\tau^\vartheta}}+\abs{M_{\hat\tau^\vartheta}}\biggv\F_\vartheta\biggr].
\end{align*}
Applying Lemma \ref{lem:BRpure} with $G^\infty$ given by $G^\infty(t)=\indi{t\geq\infty}$ in place of $G_j^\vartheta$ (whence $S_i^\vartheta(t)=L(t)$ for all $t\in\R_+$) and $\abs{L}$ in place of $L$ shows that $E[\int_{[0,\infty)}\abs{L}\,dG_i^\vartheta\mid\F_\vartheta]\leq\esssup_{\tau\geq\vartheta}E[\abs{L_\tau}\mid\F_\vartheta]$. The latter is the value of the Snell envelope of $\abs{L}$, denoted $U_{\abs{L}}$, which is itself bounded by the uniformly integrable martingale $M_{\abs{L}}$ from its Doob-Meyer decomposition; cf.\ Section \ref{subsec:optstop}. Treating the other integrals and remaining terms analogously, hence
\begin{align*}
\abs{V_i^\vartheta\bigl(G_i^\vartheta,\alpha_i^\vartheta,G_j^\vartheta,\alpha_j^\vartheta\bigr)}\leq 2\Bigl(M_{\abs{L}}(\vartheta)+M_{\abs{F}}(\vartheta)+M_{\abs{M}}(\vartheta)\Bigr).
\end{align*}
As this holds for \emph{any} $\vartheta\in\T$ and feasible strategies for the respective subgame, the family $\{2(M_{\abs{L}}(\bar\tau_n)+M_{\abs{F}}(\bar\tau_n)+M_{\abs{M}}(\bar\tau_n));n\in\N\}$ is a uniformly integrable bound for $\{\lvert V_i^{\bar\tau_n}(\cdot)\rvert;n\in\N\}$.

Now, concerning convergence, first note that when $G_j^\vartheta(\bar\tau_\infty-)=1$, then $dG_j^\vartheta$ only charges $\{F<L\}$, so also $G_i^\vartheta(\bar\tau_\infty-)=1$ by Lemma \ref{lem:Gi=Gj} and vice-versa. Therefore, on $\{G_i^\vartheta(\bar\tau_\infty-)=1\}=\{G_j^\vartheta(\bar\tau_\infty-)=1\}$, $(1-G_j^\vartheta(\bar\tau_n-))V_i^{\bar\tau_n}(\cdot)\to 0$ in expectation by Lemma \ref{lem:E(YZ)to0}. The remaining set is $\{G_i^\vartheta(\bar\tau_\infty-)\vee G_j^\vartheta(\bar\tau_\infty-)<1\}=\{\exists n\in\N\colon\bar\tau_n=\bar\tau\}$, so that $V_i^{\bar\tau_n}(\cdot)\to V_i^{\bar\tau}(\cdot)=V_i^{\bar\tau_\infty}(\cdot)$ a.s.\ on this set when $n\to\infty$, and we may pass to the limit in expectation by uniform integrability.

It remains to show that on $\{G_i^\vartheta(\bar\tau_\infty-)\vee G_j^\vartheta(\bar\tau_\infty-)<1\}$, the limit continuation payoff $V_i^{\bar\tau_\infty}(\cdot)=V_i^{\bar\tau}(\cdot)$ equals $L_{\bar\tau}\wedge F_{\bar\tau}$. We are going to apply the optimality condition from Lemma \ref{lem:BRpure} at $\bar\tau$, but in a variant such that equality holds for \emph{every} $x\in[0,1)$ (in fact independently of payoff symmetry, $F\wedge L\geq M$, and the starting date). Therefore, consider any $\tau\in\T$, the standard mixed strategies represented by $G_i^\tau$, $G_j^\tau$, and define $\bar S_i^\tau(t):=\max(S_i^\tau(t),S_i^\tau(t+))$ for all $t\in\R_+$, implying $\bar S_i^\tau(t)\geq\bar S_i^\tau(t+)=S_i^\tau(t+)$; set furthermore $\bar S_i^\tau(\infty):=S_i^\tau(\infty)$. Denoting the right-hand side of \eqref{stopSi} for $\tau$ in place of $\vartheta$ by $Z_i^\tau$, it holds that $E[S_i^\tau(\tau')\mid\F_\tau]\leq Z_i^\tau$ for every stopping time $\tau'\geq\tau$, with equality for a.e.\ $x\in[0,1)$ if $\tau'=\tau_i^{G,\tau}(x)$, as $V_i^\tau(\cdot)=Z_i^\tau$ in equilibrium. Keeping $Z_i^\tau$ fixed, we now show that the same conditions hold for $\bar S_i^\tau$, even with equality for every $x\in[0,1)$. Hence, consider an arbitrary stopping time $\tau'\geq\tau$ and let $A\in\F_{\tau'}$ be the event $\{S_i^\tau(\tau'+)>S_i^\tau(\tau')\}=\{\Delta G_j^\tau(\tau')(F_{\tau'}-M_{\tau'})>0\}$ by right-continuity of $L$ and $G_j^\tau$. With the stopping times $\tau'+\indinb{A}n^{-1}$ for any $n\in\N$ (cf.\ Lemma \ref{lem:tauC}) then $S_i^\tau(\tau'+\indinb{A}n^{-1})\to\bar S_i^\tau(\tau')$ a.s.\ as $n\to\infty$. Moreover, for any $B\in\F_\tau$, $E[\indinb{B}S_i^\tau(\tau'+\indinb{A}n^{-1})]\leq E[\indinb{B}Z_i^\tau]$ by iterated expectations, where we may pass to the limit as $S_i^\tau$ is of class {\rm (D)}. Choosing $B=\{E[\bar S_i^\tau(\tau')\mid\F_\tau]>Z_i^\tau\}$ shows that this event must have probability zero. In particular, for every $x\in[0,1)$ now $E[S_i^\tau(\tau_i^{G,\tau}(x))\mid\F_\tau]\leq E[\bar S_i^\tau(\tau_i^{G,\tau}(x))\mid\F_\tau]\leq Z_i^\tau$, with equality throughout for a.e.\ $x$.\footnote{%
This implies that $S_i^\tau(\tau_i^{G,\tau}(x))=\bar S_i^\tau(\tau_i^{G,\tau}(x))$ a.s.\ for a.e.\ $x\in[0,1)$, so that the proof of Lemma \ref{lem:BRpure} shows that also $E[\int S_i^\tau\,dG_i^\tau\mid\F_\tau]=E[\int\bar S_i^\tau\,dG_i^\tau\mid\F_\tau]$.
} 
Now fix any $x\in[0,1)$, and let for every $n\in\N$ then $x_n\in(x+(1-x)(n+1)^{-1},x+(1-x)n^{-1}]$ be such that $E[\bar S_i^\tau(\tau_i^{G,\tau}(x_n))\mid\F_\tau]=Z_i^\tau$. By monotonicity and right-continuity of $\tau_i^{G,\tau}(\cdot)$, $\tau_i^{G,\tau}(x_n)\searrow\tau_i^{G,\tau}(x)$ a.s., and as $\bar S_i^\tau(\cdot)$ is upper-semi-continuous from the right and of class {\rm (D)}, thus $E[\bar S_i^\tau(\tau_i^{G,\tau}(x))]\geq\lim_{n\to\infty}E[\bar S_i^\tau(\tau_i^{G,\tau}(x_n))]=E[Z_i^\tau]$. This shows that $E[\bar S_i^\tau(\tau_i^{G,\tau}(x))\mid\F_\tau]\leq Z_i^\tau$ must be a.s.\ binding, too. Note that, as $S_i^\tau(t+)=S_i^\tau(t)+\Delta G_j^\tau(t)(F_t-M_t)$, the assumption $F\geq M$ induces that simply $\bar S_i^\tau(t)=S_i^\tau(t+)=\int_{[0,t]}F\,dG_j^\tau+(1-G_j^\tau(t))L_t$ for all $t\in\R_+$.

Now, to characterize $V_i^{\bar\tau}(\cdot)$, note that in the subgame starting at $\bar\tau$, time-consistency implies $\bar\tau=\tau_i^{G,\bar\tau}(0)\wedge\tau_j^{G,\bar\tau}(0)$, so a.s.\ $\bar S_i^{\bar\tau}(\bar\tau)=Z_i^{\bar\tau}$ or $\bar S_j^{\bar\tau}(\bar\tau)=Z_i^{\bar\tau}$ as argued before, and thus $V_i^{\bar\tau}(\cdot)=G_j^{\bar\tau}(\bar\tau)F_{\bar\tau}+(1-G_j^{\bar\tau}(\bar\tau))L_{\bar\tau}$ or $V_j^{\bar\tau}(\cdot)=G_i^{\bar\tau}(\bar\tau)F_{\bar\tau}+(1-G_i^{\bar\tau}(\bar\tau))L_{\bar\tau}$. By right-continuity, $F_{\bar\tau}\geq L_{\bar\tau}$, and this must be binding by Lemma \ref{lem:DG>0} and the assumption $L\geq M$ whenever $G_i^{\bar\tau}(\bar\tau)>0$ or $G_j^{\bar\tau}(\bar\tau)>0$, so we must a.s.\ have $V_i^{\bar\tau}(\cdot)=V_j^{\bar\tau}(\cdot)=L_{\bar\tau}\leq F_{\bar\tau}$.

This completes the proof. Note that by Lemma \ref{lem:DG>0} and the assumption $L\geq M$, we can also state the result in the form
\begin{equation*}
V_i^\vartheta\bigl(G_i^\vartheta,\alpha_i^\vartheta,G_j^\vartheta,\alpha_j^\vartheta\bigr)\leq\Delta G_j^\vartheta(\vartheta)F_\vartheta+\bigl(1-\Delta G_j^\vartheta(\vartheta)\bigr)U_{L\wedge F}(\vartheta)\leq U_{L\wedge F}(\vartheta).\qedhere
\end{equation*}
\end{proof}

\subsection{Proof of Theorem \ref{thm:maxeql}}\label{app:maxeql}

Proposition \ref{prop:U_min(L,F)} and proof show that in any payoff-symmetric subgame-perfect equilibrium, the continuation values at any $\vartheta\in\T$ are at most
\begin{equation*}
\Delta G_j^\vartheta(\vartheta)F_\vartheta+\bigl(1-\Delta G_j^\vartheta(\vartheta)\bigr)\esssup_{\tau\in\T\colon\tau\geq\vartheta}E\bigl[L_\tau\wedge F_\tau\bigv\F_\vartheta\bigr]\leq U_{L\wedge F}(\vartheta),
\end{equation*}
in fact considering only stopping times $\tau\leq\inf\{t\geq\vartheta\mid G_i^\vartheta(t)\vee G_j^\vartheta(t)=1\}$. However, when $L_\vartheta>U_{L\wedge F}(\vartheta)$ and $G_j^\vartheta(\vartheta)<1$, then $\vartheta<\inf\{t\geq\vartheta\mid\alpha_j^\vartheta(t)>0\}$ and player $i$'s value $V_i^\vartheta(\cdot)$ would be at least $\Delta G_j^\vartheta(\vartheta)F_\vartheta+(1-\Delta G_j^\vartheta(\vartheta))L_\vartheta$ by considering stopping times $\vartheta+n^{-1}$ and $n\to\infty$ analogously to the proof of Lemma \ref{lem:DG>0}; a contradiction. Therefore, fixing $\vartheta$ and letting $\tau_n=\inf\{t\geq\vartheta\mid L_t-U_{L\wedge F}(t)\geq n^{-1}\}$ for each $n\in\N$, necessarily $G_j^{\tau_n}(\tau_n)=1$ as $L_{\tau_n}>U_{L\wedge F}(\tau_n)$ on $\{\tau_n<\infty\}$ by right-continuity under Assumption \ref{asm:payoffs}\,\ref{LFusc} (cf.\ Section \ref{subsec:optstop}). Time-consistency implies also $G_j^\vartheta(\tau_n)=1$, and hence, as $\tau_n\searrow\tau_0(\vartheta)$, $G_j^\vartheta(\tau_0(\vartheta))=1$ for some $j\in\{1,2\}$. The continuation values are then actually bounded by $U_{(L\wedge F)^{\tau_0(\vartheta)}}\leq U_{L\wedge F}$. This argument of course iterates, inducing decreasing sequences of stopping times $(\tau_n(\vartheta))_n$ and Snell envelopes $(U_{(L\wedge F)^{\tau_n(\vartheta)}})_n$, as increasingly strict constraints are introduced. The decreasing sequence of stopping times is bounded below by $\vartheta$, so the limit $\tilde\tau(\vartheta)=\lim_{n\to\infty}\tau_n(\vartheta)$ exists and is a stopping time as well. 

It is convenient to have optimal stopping times that resp.\ attain $U_{(L\wedge F)^{\tau_n(\vartheta)}}$. They exist because each process $(L\wedge F)^{\tau_n(\vartheta)}$ is clearly right-continuous and of class {\rm (D)}, but also upper-semi-continuous in expectation under Assumption \ref{asm:payoffs}\,\ref{LFusc}. These optimal stopping times simplify the argument to prove that $\tilde\tau(\vartheta)\leq\inf\{t\geq\vartheta\mid L_t>U_{(L\wedge F)^{\tilde\tau(\vartheta)}}(t)\}$. Consider an arbitrary stopping time $\sigma\in[\vartheta,\tilde\tau(\vartheta)]$, and for each $n\in\N$, let $\sigma_n\leq\tau_n(\vartheta)$ be a stopping time attaining $U_{(L\wedge F)^{\tau_n(\vartheta)}}(\sigma)=E[(L\wedge F)_{\sigma_n}\mid\F_\sigma]$, where we may assume $\sigma_n\geq\sigma_{n+1}$ because $E[(L\wedge F)_{\sigma_{n+1}}\mid\F_{\sigma_n\wedge\sigma_{n+1}}]\leq(L\wedge F)_{\sigma_n\wedge\sigma_{n+1}}$ by optimality of $\sigma_n$ and $\sigma_{n+1}\leq\tau_{n+1}(\vartheta)\leq\tau_n(\vartheta)$. Therefore, the monotone sequence $(\sigma_n)_n$ a.s.\ converges to a limit stopping time $\sigma_\infty\leq\tilde\tau(\vartheta)$. Hence, as $L_\sigma\leq E[(L\wedge F)_{\sigma_n}\mid\sigma]$ a.s.\ on $\{\sigma<\tilde\tau(\vartheta)\}$ for every $n\in\N$, and as $(L\wedge F)$ is of class {\rm (D)}, $E[\indinb{A}L_\sigma]\leq E[\indinb{A}(L\wedge F)_{\sigma_\infty}]$ for every $A\in\F_\sigma$ that is a subset of $\{\sigma<\tilde\tau(\vartheta)\}$. Choosing $A=\{L_\sigma>E[(L\wedge F)_{\sigma_\infty}\mid\F_\sigma]\}\cap\{\sigma<\tilde\tau(\vartheta)\}$ then shows that this has probability zero. In particular, thus $L_\sigma\leq E[(L\wedge F)_{\sigma_\infty}\mid\F_\sigma]\leq U_{(L\wedge F)^{\tilde\tau(\vartheta)}}(\sigma)$ on $\{\sigma<\tilde\tau(\vartheta)\}$. Let now $\tau_n=\inf\{t\geq\vartheta\mid L(t)-U_{(L\wedge F)^{\tilde\tau(\vartheta)}}(t)\geq n^{-1}\}\wedge\tilde\tau(\vartheta)$ for each $n\in\N$. By right-continuity then $L_{\tau_n}>U_{(L\wedge F)^{\tilde\tau(\vartheta)}}(\tau_n)$ on $\{\tau_n<\tilde\tau(\vartheta)\}$, so by the previous argument we must have $\tau_n=\tilde\tau(\vartheta)$ a.s.\ for every $n\in\N$. This shows that also $\inf\{t\geq\vartheta\mid L(t)>U_{(L\wedge F)^{\tilde\tau(\vartheta)}}(t)\}\wedge\tilde\tau(\vartheta)=\tilde\tau(\vartheta)$ a.s.\ as desired, resp.\ $L_t\leq U_{(L\wedge F)^{\tilde\tau(\vartheta)}}(t)$ for all $t\in[\vartheta,\tilde\tau(\vartheta))$ a.s.

$\tilde\tau(\vartheta)$ is maximal in this respect by construction: Whenever we have another $\hat\tau\geq\vartheta$ such that $L\leq U_{(L\wedge F)^{\hat\tau}}$ on $[\vartheta,\hat\tau)$, then $\hat\tau\leq\tau_0(\vartheta)$ a.s., because $U_{(L\wedge F)^{\hat\tau}}\leq U_{L\wedge F}$. So $U_{(L\wedge F)^{\hat\tau}}\leq U_{(L\wedge F)^{\tau_0(\vartheta)}}$, and by iteration $\hat\tau\leq\tau_n(\vartheta)$ for all $n\in\N$.

Let us remark some further regularity properties. First,
\begin{equation}\label{prmptL>F}
\tilde\tau(\vartheta)=\inf\{t\geq\tilde\tau(\vartheta)\mid L_t>F_t\}\qquad\text{a.s.}
\end{equation}
and thus $L_{\tilde\tau(\vartheta)}\geq F_{\tilde\tau(\vartheta)}$ by right-continuity, because we would otherwise have $L=L\wedge F\leq U_{(L\wedge F)^{\tau_n(\vartheta)}}$ for all $n\in\N_0$ between $\tilde\tau(\vartheta)$ and the stopping time on the right-hand side. Second, for any $\vartheta'\in\T$,
\begin{equation}\label{tildetauTC}
\tilde\tau(\vartheta')=\tilde\tau(\vartheta)\quad\text{on}\quad\{\vartheta'\in[\vartheta,\tilde\tau(\vartheta)]\}\qquad\text{a.s.} 
\end{equation}
by construction. Therefore, $\tilde\tau(\vartheta)$ is monotone in $\vartheta$ and 
\begin{equation}\label{prmptdense}
\tilde\tau(\vartheta)=\lim_{n\to\infty}\tilde\tau(\vartheta_n)\qquad\text{a.s.}
\end{equation}
for any sequence of stopping times $\vartheta_n\searrow\vartheta$. Indeed, denoting the monotone limit on the right-hand side by $\tau_\infty\geq\tilde\tau(\vartheta)$, then $\vartheta_n\vee\tau_\infty\in[\vartheta_n,\tilde\tau(\vartheta_n)]\Rightarrow\tilde\tau(\tau_\infty)\leq\tilde\tau(\vartheta_n\vee\tau_\infty)=\tilde\tau(\vartheta_n)$ for all $n\in\N$, so $\tau_\infty=\tilde\tau(\tau_\infty)$ a.s. Then also $\tau_\infty\leq\tilde\tau(\vartheta_n)$ must bind on $\{\vartheta_n\leq\tau_\infty\}$ for every $n\in\N$, implying $\tilde\tau(\vartheta_n\wedge\tau_\infty)=\tau_\infty$ and hence $L_t\leq U_{(L\wedge F)^{\tau_\infty}}(t)$ for all $t\in[\vartheta_n\wedge\tau_\infty,\tau_\infty)$ a.s. The latter holds then a.s.\ for all $n\in\N$, i.e., the inequality holds a.s.\ on $(\vartheta,\tau_\infty)$ and by right-continuity also at $\vartheta$ on $\{\vartheta<\tau_\infty\}$. The maximality of $\tilde\tau(\vartheta)$ regarding this property now implies that $\tau_\infty\geq\tilde\tau(\vartheta)$ cannot be strict with positive probability.

Next, define the process $\tilde L$ by
\begin{equation}\label{tildeL_F>M}
\tilde L_t:=\begin{cases}
L_t & t<\tilde\tau(\vartheta)\\
F_{\tilde\tau(\vartheta)} & t\geq\tilde\tau(\vartheta)
\end{cases}
\end{equation}
with Snell envelope $U_{\tilde L}$ and its compensator $D_{\tilde L}$. From $L_{\tilde\tau(\vartheta)}\geq F_{\tilde\tau(\vartheta)}$ and $L\leq U_{(L\wedge F)^{\tilde\tau(\vartheta)}}$ on $[\vartheta,\tilde\tau(\vartheta))$, we see that $(L\wedge F)^{\tilde\tau(\vartheta)}\leq \tilde L\leq U_{(L\wedge F)^{\tilde\tau(\vartheta)}}\leq U_{\tilde L}$ on $[\vartheta,\infty]$. As the Snell envelope is the smallest supermartingale dominating the payoff process, the last inequality must actually bind and thus also $D_{\tilde L}=D_{(L\wedge F)^{\tilde\tau(\vartheta)}}$ on $[\vartheta,\infty]$. By this observation we obtain 
\begin{equation}\label{D_tildeLreg}
\int_{[\vartheta,\infty]}\indi{L>F}\,dD_{\tilde L}=\sum_{[\vartheta,\infty]}\Delta D_{\tilde L}=0\qquad\text{a.s.}
\end{equation}
Indeed, the path regularity of $(L\wedge F)^{\tilde\tau(\vartheta)}$ verified before implies continuity of $D_{(L\wedge F)^{\tilde\tau(\vartheta)}}$, and \eqref{U_L=L} applied to $(L\wedge F)^{\tilde\tau(\vartheta)}$ in place of $L$ with $U_{(L\wedge F)^{\tilde\tau(\vartheta)}}\geq L$ on $[\vartheta,\tilde\tau(\vartheta))$ and $D_{(L\wedge F)^{\tilde\tau(\vartheta)}}=D_{\tilde L}$ yields 
\begin{align*}
0&=\int_{[0,\infty]}\Bigl(U_{(L\wedge F)^{\tilde\tau(\vartheta)}}-(L\wedge F)^{\tilde\tau(\vartheta)}\Bigr)\,dD_{\tilde L} \\
&\geq\int_{[\vartheta,\tilde\tau(\vartheta))}\bigl(L-(L\wedge F)\bigr)\,dD_{\tilde L}=\int_{[\vartheta,\tilde\tau(\vartheta))}\indi{L>F}\bigl(L-F\bigr)\,dD_{\tilde L}\geq 0,
\end{align*}
which must hold with equality throughout. Furthermore, $D_{\tilde L}=D_{(L\wedge F)^{\tilde\tau(\vartheta)}}$ is right-continuous and flat from $\tilde\tau(\vartheta)$ on, so it does not charge $[\tilde\tau(\vartheta),\infty]$.

Now we can define equilibrium strategies $((G_1^\vartheta,\tilde\alpha_1^\vartheta);\vartheta\in\T)$ and $((G_2^\vartheta,\tilde\alpha_2^\vartheta);\vartheta\in\T)$ as follows. Pick $i,j\in\{1,2\}$, $i\neq j$, and then define $G_i^\vartheta$, $G_j^\vartheta$ for every $\vartheta\in\T$ as in Theorem \ref{thm:mixedeql} with $\tau^\vartheta=\tilde\tau(\vartheta)$. $\tilde\alpha_i^\vartheta$, $\tilde\alpha_j^\vartheta$ will be defined later, as we need a careful construction. The family $(G_i^\vartheta;\vartheta\in\T)$ satisfies the time-consistency condition from Theorem \ref{thm:SPE} by \eqref{tildetauTC}; the same holds for $j$. Moreover, given $\tau^\vartheta=\tilde\tau(\vartheta)$ and the assumption $F\geq M$, $\tilde L^{\tau^\vartheta}$ from Theorem \ref{thm:mixedeql} is the same as $\tilde L$ from \eqref{tildeL_F>M} for any fixed $\vartheta$. Therefore, the regularity conditions for $D_{\tilde L}^{\tau^\vartheta}=D_{\tilde L}$ ensured by the hypothesis $\tau^\vartheta\leq\inf\{t\geq\mid L_t>F_t\}$ in Theorem \ref{thm:mixedeql} (which had no other role) are now implied by \eqref{D_tildeLreg}: continuity and that $L_{\tau_i^\vartheta}=F_{\tau_i^\vartheta}$ on $\{\tau_i^\vartheta<\tau^\vartheta\}$; the latter now follows from $\int\indi{F\leq L}\,dD_{\tilde L}=\int\indi{F=L}\,dD_{\tilde L}$ (and right-continuity of $L-F$). Given that now furthermore $F_{\tau_i^\vartheta}\geq M_{\tau_i^\vartheta}$ on $\{\tau_i^\vartheta<\tau^\vartheta\}$ holds by assumption, this means that the proof of Theorem \ref{thm:symeql} can be applied if we achieve continuation equilibrium payoffs $V_i^{\tilde\tau(\vartheta)}(\cdot)=V_j^{\tilde\tau(\vartheta)}(\cdot)=F_{\tilde\tau(\vartheta)}$ ($\geq M_{\tilde\tau(\vartheta)}$ by assumption) for any $\vartheta\in\T$ from time-consistent, feasible families $(\tilde\alpha_i^\vartheta;\vartheta\in\T)$, $(\tilde\alpha_j^\vartheta;\vartheta\in\T)$. This will then induce equilibrium payoffs $U_{\tilde L}(\vartheta)=U_{(L\wedge F)^{\tilde\tau(\vartheta)}}(\vartheta)$ at any $\vartheta\in\T$.

We need to suppress $\alpha_i^\vartheta$, $\alpha_j^\vartheta$ as given by Proposition \ref{prop:eqlL>F} on the random intervals $[\vartheta,\tilde\tau(\vartheta))$ in a measurable way across all $\vartheta\in\T$, while ensuring that they still generate suitable equilibrium payoffs at each $\tilde\tau(\vartheta)$. Therefore, begin with the (optional) set $A=\bigcup_{q\in\Q_+}[q,\tilde\tau(q))$. Then approximate any given $\vartheta\in\T$ from above by the decreasing sequence $(\vartheta_n)_{n\in\N}$, where $\vartheta_n=2^{-n}\ceil{2^n\vartheta+1}\in\T$. Any $\vartheta_n$ takes values only in $\{k2^{-n};k\in\bar\N\}$ and thus $[\vartheta_n,\tilde\tau(\vartheta_n))=\sum_{k\in\N}[k2^{-n},\tilde\tau(k2^{-n}))\indi{\vartheta_n=k2^{-n}}$ a.s., which means that $[\vartheta_n,\tilde\tau(\vartheta_n))$ belongs to $A$ up to a $P$-nullset. Furthermore, $\vartheta_n\leq\vartheta+2^{1-n}$ implies $\tilde\tau(\vartheta)=\tilde\tau(\vartheta_k)$ for all $k\geq n$ on $\{\tilde\tau(\vartheta)\geq\vartheta+2^{1-n}\}$ a.s., and therefore on this set $(\vartheta,\tilde\tau(\vartheta))=\bigcup_{k\geq n}[\vartheta_k,\tilde\tau(\vartheta_k))$ a.s., which means that $(\vartheta,\tilde\tau(\vartheta))$ belongs to $A$ on $\{\tilde\tau(\vartheta)\geq\vartheta+2^{1-n}\}$ up to a $P$-nullset. Aggregating, $(\vartheta,\tilde\tau(\vartheta))=(\vartheta,\tilde\tau(\vartheta))(\indi{\tilde\tau(\vartheta)\geq\vartheta+2^{0}}+\sum_{n}\indi{\tilde\tau(\vartheta)\in[\vartheta+2^{-n},\vartheta+2^{1-n})})$ a.s.\ and hence belongs to $A$ up to a $P$-nullset. 

Now define $\tilde\alpha_i^\vartheta=\tilde\alpha_j^\vartheta$ by
\begin{equation*}
\tilde\alpha_i^\vartheta(t):=\Bigl(\limsup_{u\searrow t}\indi{u\in A^c}\Bigr)\alpha_i^\vartheta(t)
\end{equation*}
for any $t\in\R_+$ and $\vartheta\in\T$, with $\alpha_i^\vartheta$ as in Proposition \ref{prop:eqlL>F}. $\tilde\alpha_i^\vartheta$ inherits progressive measurability from its two factors, where that of the $\limsup(\cdot)$ holds by Theorem IV.33 (c) in \cite{DellacherieMeyer78}. Like $\alpha_i^\vartheta$, $\tilde\alpha_i^\vartheta$ satisfies $\tilde\alpha_i^\vartheta(t)=\indi{t\geq\vartheta}\tilde\alpha_i^{\vartheta_0}(t)$ for all $t\in\R_+$, where $\vartheta_0\equiv 0$, which also ensures time-consistency across $\vartheta$. Additionally, now $\tilde\alpha_i^\vartheta(t)=0$ for all $t\in[\vartheta,\tilde\tau(\vartheta))$ a.s., so $\tilde\alpha_i^\vartheta(t)>0\Rightarrow G_i^\vartheta(t)=G_j^\vartheta(t)=1$. For feasibility, it thus only remains to verify right-continuity when $\tilde\alpha_i^\vartheta(t)<1$. First note that when $\limsup_{u\searrow t}\indi{u\in A^c}=0$, then also its right-hand limit vanishes by upper-semi-continuity, such that also $\tilde\alpha_i^\vartheta(t)=\tilde\alpha_i^\vartheta(t+)$.  When $\limsup_{u\searrow t}\indi{u\in A^c}=1$ and $\tilde\alpha_i^\vartheta(t)<1$, then also $\alpha_i^\vartheta(t)<1$, so that $\alpha_i^\vartheta(\cdot)$ is right-continuous in $t$, and it thus remains to ensure right-continuity of $t\mapsto\limsup_{u\searrow t}\indi{u\in A^c}$ when $\tilde\alpha_i^\vartheta(t)\in(0,1)$.

Therefore, consider any fixed $\vartheta\in\T$ and the associated set $B=\{\limsup_{u\searrow\vartheta}\indi{u\in A^c}>\liminf_{u\searrow\vartheta}\indi{u\in A^c}\}$. On $B$, $\vartheta=\tilde\tau(\vartheta)$ a.s.\ because $(\vartheta,\tilde\tau(\vartheta))$ belongs to $A$ up to a $P$-nullset. Consider also a sequence $\vartheta_n\searrow\vartheta$ (e.g., the previous one). By \eqref{prmptdense} then $\tilde\tau(\vartheta_n)\searrow\vartheta$ a.s.\ on $B$. Moreover, for each $n\in\N$, as $\vartheta_n>\vartheta$ and $\liminf_{u\searrow\vartheta}\indi{u\in A^c}=0$ on $B$, there must exist some $q\in(\vartheta,\vartheta_n)$ with $q<\tilde\tau(q)$ for each $\omega\in B$, resp.\ $\sup_{q\in\Q_+}\indi{q>\vartheta}((\tilde\tau(q)\wedge\vartheta_n)-q)>0$ on $B$. The aim is now to select such $q(\omega)$ measurably, such that they form a stopping time $\tau_n$. Therefore, let 
\begin{equation*}
l_n=\min\{l\in\N\mid P[\indinb{B}\sup_{q\in\Q_+}\indi{q>\vartheta}((\tilde\tau(q)\wedge\vartheta_n)-q)>2^{-l}]\geq(1-n^{-1})P[B]\}.
\end{equation*} 
This means that the probability that $B$ is realized and that some (grid) point $2^{k-l_n}$ with $k\in\N$ belongs to an interval $[q,\tilde\tau(q))\cap(\vartheta,\vartheta_n)$, i.e., to $A\cap(\vartheta,\vartheta_n)$, is at least $(1-n^{-1})P[B]$. As monotonicity implies $\tilde\tau(q)\leq\tilde\tau(2^{k-l_n})$ for all rational $q\leq 2^{k-l_n}$ a.s.\ for any given $k$, we have $P[\bigcup_{k\in\N}\{2^{k-l_n}=\tilde\tau(2^{k-l_n})\}\cap\{2^{k-l_n}\in A\}]=0$ and thus still $P[B\cap(\bigcup_{k\in\N}\{2^{k-l_n}\in A\cap(\vartheta,\vartheta_n)\}\cap\{2^{k-l_n}<\tilde\tau(2^{k-l_n})\})]\geq(1-n^{-1})P[B]$. Hence, if we define stopping times $\tau_{n,k}$ with value $2^{k-l_n}$ if this is in $(\vartheta,\tilde\tau(2^{k-l_n})\wedge\vartheta_n)$ and $\vartheta_n$ else for every $k\in\N$, then these only take values in $(\{2^{k-l_n}\mid k\in\N\}\cap(\vartheta,\vartheta_n))\cup\{\vartheta_n\}$ and $P[B\cap(\bigcup_{k\in\N}\{\tau_{n,k}=2^{k-l_n}\in(\vartheta,\tilde\tau(2^{k-l_n})\wedge\vartheta_n)\})]\geq(1-n^{-1})P[B]$, so we can also define $\tau_n=\min_{k\in\N}\tau_{k,n}\in(\vartheta,\vartheta_n)$ satisfying $\tilde\tau(\tau_n)=\indi{\tau_n=\vartheta_n}\tilde\tau(\vartheta_n)+\sum_{k\in\N}\indi{\tau_n=2^{k-l_n}}\tilde\tau(2^{k-l_n})$ a.s.\ by \eqref{tildetauTC} and thus $P[B\cap\{\tau_n<\tilde\tau(\tau_n)\}]\geq(1-n^{-1})P[B]$.

Letting $C_n=\{\tau_n<\tilde\tau(\tau_n)\}$ for each $n\in\N$, now $L_{\tau_n}\leq U_{(L\wedge F)^{\tilde\tau(\tau_n)}}(\tau_n)$ a.s.\ on $C_n$ and $P[B\cap C_n^c]\leq n^{-1}P[B]$. Moreover, iterated expectations with $B\cap B'\cap C_n^c\in\F_{\tau_n}$ for any further set $B'\in\F_\vartheta$ and using that $U_{(L\wedge F)^{\tilde\tau(\tau_n)}}(\cdot)$ is a supermartingale yield $E[\indinb{B\cap B'\cap C_n^c}(L_{\tau_n}-U_{(L\wedge F)^{\tilde\tau(\tau_n)}}(\tau_n))]\leq E[\indinb{B\cap B'\cap C_n^c}(L_{\tau_n}-U_{(L\wedge F)^{\tilde\tau(\tau_n)}}(\tilde\tau(\tau_n)))]=E[\indinb{B\cap B'\cap C_n^c}(L_{\tau_n}-(L\wedge F)_{\tilde\tau(\tau_n)})]$, which vanishes by Lemma \ref{lem:E(YZ)to0} as $n\to\infty$ (letting $t=(n-1)n^{-1}$), because $L$ and $F$ are of class {\rm (D)} and $\indinb{B\cap B'\cap C_n^c}\to 0$ in probability. Therefore, $\limsup_{n\to\infty}E[\indinb{B\cap B'}(L_{\tau_n}-U_{(L\wedge F)^{\tilde\tau(\tau_n)}}(\tau_n))]\leq\limsup_{n\to\infty}E[\indinb{B\cap B'\cap C_n}(L_{\tau_n}-U_{(L\wedge F)^{\tilde\tau(\tau_n)}}(\tau_n))]\leq 0$. On the other hand, letting similarly as before $\sigma_n\in[\tau_n,\tilde\tau(\tau_n)]$ resp.\ attain $U_{(L\wedge F)^{\tilde\tau(\tau_n)}}(\tau_n)$ for each $n\in\N$, $E[\indinb{B\cap B'}(L_{\tau_n}-U_{(L\wedge F)^{\tilde\tau(\tau_n)}}(\tau_n))]=E[\indinb{B\cap B'}(L_{\tau_n}-(L\wedge F)_{\sigma_n})]\to E[\indinb{B\cap B'}(L_\vartheta-F_\vartheta)]$ as $n\to\infty$ because $\tau_n\leq\sigma_n\leq\tilde\tau(\tau_n)\leq\tilde\tau(\vartheta_n)\to\vartheta$, $F_\vartheta\leq L_\vartheta$ by $\vartheta=\tilde\tau(\vartheta)$ on $B$, and $L$ and $F$ are right-continuous and of class {\rm (D)}. Choosing $B'=\{L_\vartheta>F_\vartheta\}$ now shows that then $B\cap B'$ has probability zero, i.e., a.s.\ $\limsup_{u\searrow\vartheta}\indi{u\in A^c}=\liminf_{u\searrow\vartheta}\indi{u\in A^c}$ or $L_\vartheta\leq F_\vartheta$.

Now apply this for the desired path property to $\vartheta_n'=\inf\{t\geq 0\mid(\limsup_{u\searrow t}\indi{u\in A^c}-\liminf_{u\searrow t}\indi{u\in A^c})(L_t-F_t)\geq n^{-1}\}$, $n\in\N$. $\vartheta_n'$ is indeed a stopping time by the progressive measurability of $\limsup(\cdot)$ remarked before and similarly of $\liminf(\cdot)$. The latters' difference is upper-semi-continuous and $L-F$ even continuous from the right, so $(\limsup_{u\searrow\vartheta_n'}\indi{u\in A^c}-\liminf_{u\searrow\vartheta_n'}\indi{u\in A^c})(L_{\vartheta_n'}-F_{\vartheta_n'})\geq n^{-1}$ a.s.\ on $\{\vartheta_n'<\infty\}$, showing that in fact $\vartheta_n'=\infty$ a.s. As this still holds simultaneously for all $n\in\N$, indeed a.s.\ $\limsup_{u\searrow t}\indi{u\in A^c}>\liminf_{u\searrow t}\indi{u\in A^c}\Rightarrow L_t\leq F_t$ for all $t\in\R_+$. $L_t\leq F_t$ implies furthermore $\alpha_i^{\vartheta}(t)\in\{0,1\}$ and thus what we intended to show.

Finally, to verify that we obtain equilibrium payoffs as in Proposition \ref{prop:eqlL>F} at each $\vartheta'=\tilde\tau(\vartheta)$, we first show that then $\vartheta'=\inf\{t\geq\vartheta'\mid\tilde\alpha_i^{\vartheta'}(t)>0\}=:\tilde\tau^{\vartheta'}$ a.s. We have $\tilde\tau(\vartheta')=\vartheta'$ and $\tilde\tau(\tilde\tau^{\vartheta'})=\tilde\tau^{\vartheta'}$ a.s., the former by \eqref{tildetauTC} and the latter as $(\tau,\tilde\tau(\tau))$ belongs to $A$ up to a $P$-nullset for any $\tau\in\T$ and thus $\tilde\alpha_i^{\vartheta'}(t)=0$ for all $t\in(\tau,\tilde\tau(\tau))$ a.s. Now suppose that $\tilde\tau^{\vartheta'}>\vartheta'$ with positive probability. Then also $\tilde\tau^{\vartheta'}>\inf\{t\geq\vartheta'\mid L_t>U_{(L\wedge F)^{\tilde\tau^{\vartheta'}}}(t)\}$ with positive probability, because otherwise $\tilde\tau(\vartheta')\geq\tilde\tau^{\vartheta'}>\vartheta'$ with positive probability. Hence, letting $\vartheta_n'=\inf\{t\geq\vartheta'\mid L_t-U_{(L\wedge F)^{\tilde\tau^{\vartheta'}}}(t)\geq n^{-1}\}\wedge\tilde\tau^{\vartheta'}$ for any $n\in\N$, $\lim_{n\to\infty}P[\{\vartheta_n'<\tilde\tau^{\vartheta'}\}]>0$. On $\{\vartheta_n'<\tilde\tau^{\vartheta'}\}$, $L_{\vartheta_n'}>U_{(L\wedge F)^{\tilde\tau^{\vartheta'}}}(\vartheta_n')$ by right-continuity. Then necessarily $L_{\vartheta_n'}>F_{\vartheta_n'}$ and thus $\alpha_i^{\vartheta'}(\vartheta_n')>0$, so $\tilde\alpha_i^{\vartheta'}(\vartheta_n')=0$ implies $\limsup_{u\searrow\vartheta_n'}\indi{u\in A^c}=0$ and furthermore $\vartheta_n'<\tilde\tau(\vartheta_n')$ by \eqref{prmptdense}. This, however, requires $L_{\vartheta_n'}\leq U_{(L\wedge F)^{\tilde\tau(\vartheta_n')}}(\vartheta_n')$, contradicting that the latter is at most $U_{(L\wedge F)^{\tilde\tau^{\vartheta'}}}(\vartheta_n')$ by $\tilde\tau(\vartheta_n')\leq\tilde\tau(\tilde\tau^{\vartheta'})=\tilde\tau^{\vartheta'}$. Hence, $P[\{\vartheta_n'<\tilde\tau^{\vartheta'}\}]=0$ for all $n\in\N$, resp.\ indeed $\tilde\tau^{\vartheta'}=\vartheta'$ a.s. 

Now furthermore $\limsup_{u\searrow\vartheta'}\indi{u\in A^c}=1$ by \eqref{prmptdense} and thus $\tilde\alpha_i^{\vartheta'}(\vartheta')=\alpha_i^{\vartheta'}(\vartheta')$. From Definition \ref{def:outcome}, it is obvious that for \emph{any} choice of $i,j\in\{1,2\}$, $i\neq j$, and any strategy $(G_a^{\vartheta'},\alpha_a^{\vartheta'})$ for player $i$, the outcome probabilities only depend on the values of $\alpha_a^{\vartheta'}$, $\tilde\alpha_j^{\vartheta'}$ at $\vartheta'$, except when both are zero and also $\vartheta'=\inf\{t\geq\vartheta'\mid\alpha_a^{\vartheta'}(t)>0\}$; then also the values on an arbitrarily short interval $(\vartheta',\vartheta'+\varepsilon)$ with $\varepsilon>0$ need to be known. In the former case, the outcome probabilities from any such $\alpha_a^{\vartheta'}$ are thus the same for $\tilde\alpha_j^{\vartheta'}$ and $\alpha_j^{\vartheta'}$. In the latter case, $\lambda_M^{\vartheta'}=0$ for any such $\alpha_a^{\vartheta'}$ and either of $\tilde\alpha_j^{\vartheta'}$ or $\alpha_j^{\vartheta'}$, and $\tilde\alpha_j^{\vartheta'}(\vartheta')=\alpha_j^{\vartheta'}(\vartheta')=0$ implies $L_{\vartheta'}=F_{\vartheta'}$, so player $i$'s expected payoff is $F_{\vartheta'}$, whether against $\tilde\alpha_j^{\vartheta'}$ or $\alpha_j^{\vartheta'}$. As moreover $G_a^{\vartheta'}=G_i^{\vartheta'}$ with $\alpha_a^{\vartheta'}=\tilde\alpha_i^{\vartheta'}$ or $\alpha_i^{\vartheta'}$ resp.\ yields the same expected payoff in each case (in fact even the same outcome probabilities also in the second case due to symmetry), $(G_i^{\vartheta'},\tilde\alpha_i^{\vartheta'})$ and $(G_j^{\vartheta'},\tilde\alpha_j^{\vartheta'})$ inherit the equilibrium property from $(G_i^{\vartheta'},\alpha_i^{\vartheta'})$ and $(G_j^{\vartheta'},\alpha_j^{\vartheta'})$.
\qed

\section{Outcome probabilities}\label{app:outcome}

The following definition is a simplification of Definition 2.9 in \cite{RiedelSteg17}, resulting from right-continuity of any $\alpha_i^\vartheta(\cdot)$ also where it takes the value zero. Right-continuity implies that when $\alpha_i^\vartheta(t)>0$, then infinitely many such ``atoms'' of approximately the same size follow in any arbitrarily short time interval. Therefore, define the functions $\mu_L$ and $\mu_M$ from $[0,1]^2\setminus (0,0)$ to $[0,1]$ by
\begin{align*}
\mu_L(x,y):=\frac{x(1-y)}{x+y-xy}\qquad\text{and}\qquad\mu_M(x,y):=\frac{xy}{x+y-xy},
\end{align*}
which resp.\ yield the probability that player $i$ stops first or that both players stop simultaneously in an infinitely repeated game where player $i$'s constant stage stopping probability is $x$ and player $j$'s $y$; $1-\mu_L(x,y)-\mu_M(x,y)=\mu_L(y,x)$ is then the probability of player $j$ stopping first. An interval of atoms can also meet an isolated (conditional) atom $\Delta G_j^\vartheta(t)/(1-G_j^\vartheta(t-))$ played just once. Special care is needed when atoms become arbitrarily small, because $\mu_L$ cannot be made continuous at the origin; see \cite{RiedelSteg17} for more details. Recall that $\alpha_i^\vartheta(t)>0\Rightarrow G_i^\vartheta(t)=1$ and that $(1-G_i^\vartheta(\hat\tau^\vartheta-))(1-G_j^\vartheta(\hat\tau^\vartheta-))$ is the probability that nobody stops before the extensions are used. For notational convenience, it is understood that if $(1-G_i^\vartheta(\hat\tau^\vartheta-))=0$, then $(1-G_i^\vartheta(\hat\tau^\vartheta-))/(1-G_i^\vartheta(\hat\tau^\vartheta-)):=0$.

\begin{definition}\label{def:outcome}
Given $\vartheta\in\T$ and a pair of extended mixed strategies $\bigl(G_1^\vartheta,\alpha_1^\vartheta\bigr)$ and $\bigl(G_2^\vartheta,\alpha_2^\vartheta\bigr)$, the \emph{outcome probabilities} $\lambda^\vartheta_{L,1}$, $\lambda^\vartheta_{L,2}$ and $\lambda^\vartheta_M$ at $\hat\tau^\vartheta:=\inf\{t\geq\vartheta\mid\alpha_1^\vartheta(t)+\alpha_2^\vartheta(t)>0\}$ are defined as follows. Let $i,j\in\{1,2\}$, $i\neq j$. 

\noindent
If $\hat\tau^\vartheta<\hat\tau_j^\vartheta:=\inf\{t\geq\vartheta\mid\alpha_j^\vartheta(t)>0\}$, then
\begin{align*}
\lambda^\vartheta_{L,i}:={}&\bigl(1-G_i^\vartheta(\hat\tau^\vartheta-)\bigr)\bigl(1-G_j^\vartheta(\hat\tau^\vartheta-)\bigr)\Biggl[1-\frac{\Delta G^\vartheta_j(\hat\tau^\vartheta)}{1-G^\vartheta_j(\hat\tau^\vartheta-)}\Biggr]=\Delta G_i^\vartheta(\hat\tau^\vartheta)\bigl(1-G_j^\vartheta(\hat\tau^\vartheta)\bigr),\displaybreak[0]\\
\lambda^\vartheta_M:={}&\bigl(1-G_i^\vartheta(\hat\tau^\vartheta-)\bigr)\bigl(1-G_j^\vartheta(\hat\tau^\vartheta-)\bigr)\alpha_i^\vartheta(\hat\tau^\vartheta)\frac{\Delta G^\vartheta_j(\hat\tau^\vartheta)}{1-G^\vartheta_j(\hat\tau^\vartheta-)}=\Delta G_i^\vartheta(\hat\tau^\vartheta)\Delta G_j^\vartheta(\hat\tau^\vartheta)\alpha_i^\vartheta(\hat\tau^\vartheta).
\end{align*}
If $\hat\tau^\vartheta<\hat\tau_i^\vartheta:=\inf\{t\geq\vartheta\mid\alpha_i^\vartheta(t)>0\}$, then
\begin{align*}
\lambda^\vartheta_{L,i}:={}&\bigl(1-G_i^\vartheta(\hat\tau^\vartheta-)\bigr)\bigl(1-G_j^\vartheta(\hat\tau^\vartheta-)\bigr)\frac{\Delta G^\vartheta_i(\hat\tau^\vartheta)}{1-G^\vartheta_i(\hat\tau^\vartheta-)}\bigl(1-\alpha_j^\vartheta(\hat\tau^\vartheta)\bigr)=\Delta G_i^\vartheta(\hat\tau^\vartheta)\Delta G_j^\vartheta(\hat\tau^\vartheta)\bigl(1-\alpha_j^\vartheta(\hat\tau^\vartheta)\bigr),\displaybreak[0]\\[8pt]
\lambda^\vartheta_M:={}&\bigl(1-G_i^\vartheta(\hat\tau^\vartheta-)\bigr)\bigl(1-G_j^\vartheta(\hat\tau^\vartheta-)\bigr)\frac{\Delta G^\vartheta_i(\hat\tau^\vartheta)}{1-G^\vartheta_i(\hat\tau^\vartheta-)}\alpha_j^\vartheta(\hat\tau^\vartheta)=\Delta G_i^\vartheta(\hat\tau^\vartheta)\Delta G_j^\vartheta(\hat\tau^\vartheta)\alpha_j^\vartheta(\hat\tau^\vartheta).
\end{align*}
If $\hat\tau^\vartheta=\hat\tau_1^\vartheta=\hat\tau_2^\vartheta$ and $\alpha_1^\vartheta(\hat\tau^\vartheta)+\alpha_2^\vartheta(\hat\tau^\vartheta)>0$, then
\begin{align*}
\lambda^\vartheta_{L,i}:={}&\bigl(1-G_i^\vartheta(\hat\tau^\vartheta-)\bigr)\bigl(1-G_j^\vartheta(\hat\tau^\vartheta-)\bigr)\mu_L(\alpha_i^\vartheta(\hat\tau^\vartheta),\alpha_j^\vartheta(\hat\tau^\vartheta)),\\[6pt]
\lambda^\vartheta_M:={}&\bigl(1-G_i^\vartheta(\hat\tau^\vartheta-)\bigr)\bigl(1-G_j^\vartheta(\hat\tau^\vartheta-)\bigr)\mu_M(\alpha_1^\vartheta(\hat\tau^\vartheta),\alpha_2^\vartheta(\hat\tau^\vartheta)).
\end{align*}
If $\hat\tau^\vartheta=\hat\tau_1^\vartheta=\hat\tau_2^\vartheta$ and $\alpha_1^\vartheta(\hat\tau^\vartheta)+\alpha_2^\vartheta(\hat\tau^\vartheta)=0$, then
\begin{align*}
\lambda^\vartheta_{L,i}:={}&\bigl(1-G_i^\vartheta(\hat\tau^\vartheta-)\bigr)\bigl(1-G_j^\vartheta(\hat\tau^\vartheta-)\bigr)\,\frac{1}{2}\!\begin{aligned}[t]
\biggl\{&\liminf_{\underset{\alpha_i^\vartheta(t)+\alpha_j^\vartheta(t)>0}{t\searrow\hat\tau^\vartheta}}\mu_L(\alpha_i^\vartheta(t),\alpha_j^\vartheta(t))\\
+&\limsup_{\underset{\alpha_i^\vartheta(t)+\alpha_j^\vartheta(t)>0}{t\searrow\hat\tau^\vartheta}}\mu_L(\alpha_i^\vartheta(t),\alpha_j^\vartheta(t))\biggr\},
\end{aligned}\\
\lambda^\vartheta_M:={}&0.
\end{align*}
\end{definition}

\begin{remark}
\noindent
\begin{enumerate}
\item
$\lambda^\vartheta_{L,j}$ is of course also the probability that player $i$ becomes follower. It always holds that $\lambda^\vartheta_M+\lambda^\vartheta_{L,i}+\lambda^\vartheta_{L,j}=(1-G_i^\vartheta(\hat\tau^\vartheta-))(1-G_j^\vartheta(\hat\tau^\vartheta-))$. Dividing by $(1-G_i^\vartheta(\hat\tau^\vartheta-))(1-G_j^\vartheta(\hat\tau^\vartheta-))$ if feasible yields the corresponding conditional probabilities. 

\item
If $\alpha_i^\vartheta(\hat\tau^\vartheta)=1$, then player $i$ stops for sure and no limit argument is needed. Otherwise, both $\alpha_1^\vartheta(\cdot)$, $\alpha_2^\vartheta(\cdot)$ are right-continuous and the corresponding limit of $\mu_M$ exists. $\mu_L$, however, has no continuous extension at the origin, whence we use the symmetric combination of $\liminf$ and $\limsup$, ensuring consistency whenever the limit does exist. If the limit in a potential equilibrium does not exist, then both players will be indifferent about the roles; see Lemma A.5 in \cite{RiedelSteg17}.
\end{enumerate}
\end{remark}
} 

\newcommand{\noop}[1]{}

\end{document}